\numberwithin{equation}{section}
\numberwithin{figure}{section}
\theoremstyle{plain}
\newtheorem{lemma}{Lemma}[section]
\newtheorem{proposition}[lemma]{Proposition}
\newtheorem{corollary}[lemma]{Corollary}
\newtheorem{theorem}[lemma]{Theorem}
\theoremstyle{definition}
\newtheorem{definition}[lemma]{Definition}
\newtheorem{remark}[lemma]{Remark}
\newtheorem{example}[lemma]{Example}
\newcommand{\R}{\mathbb{R}}
\newcommand{\Z}{\mathbb{Z}}
\newcommand{\N}{\mathbb{N}}
\newcommand{\Q}{\mathbb{Q}}
\newcommand{\sone}{\mathbb{S}^1}
\newcommand{\ind}{\mathbf{1}}
\renewcommand{\P}{\mathbb{P}}
\newcommand{\E}{\mathbb{E}}
\newcommand{\F}{\mathcal{F}}
\newcommand{\supp}{\mathrm{supp}}
\newcommand{\esssup}{\text{ess sup}}
\newcommand{\weak}{\tau_{w}}
\newcommand{\weakp}{\tau_{w}^p}
\newcommand{\closed}{\mathrm{C}}
\newcommand{\cpt}{\mathrm{K}}
\newcommand{\kurouter}{\mathrm{Ls}}
\newcommand{\kurinner}{\mathrm{Li}}
\newcommand{\kurlimit}{\mathrm{Lt}}
\newcommand{\upkuratowski}{\tau_{K}^{+}}
\newcommand{\downkuratowski}{\tau_{K}^{-}}
\newcommand{\Fell}{\tau_{F}}
\newcommand{\uphausdorfftopo}{\tau_{H}^{+}}
\newcommand{\downhausdorfftopo}{\tau_{H}^{-}}
\newcommand{\hausdorfftopo}{\tau_{H}}
\newcommand{\effros}{\mathcal{E}}
\newcommand{\pointwise}{\tau_{\text{PW}}}
\begin{document}
	
	\title{Limit Theorems for Fr\'echet Mean Sets}
	
	\author{Steven N. Evans}
	\address{Department of Statistics\\
		University  of California\\ 
		367 Evans Hall \#3860\\
		Berkeley, CA 94720-3860 \\
		U.S.A.}
	\email{evans@stat.berkeley.edu}
	
	\author{Adam Q. Jaffe}
	\address{Department of Statistics\\
		University  of California\\ 
		367 Evans Hall \#3860\\
		Berkeley, CA 94720-3860 \\
		U.S.A.}
	\email{aqjaffe@berkeley.edu}
	\thanks{This material is based upon work for which AQJ was supported by the National Science Foundation Graduate Research Fellowship under Grant No. DGE 1752814.}
	
	\subjclass[2010]{60F15, 51F99, 54C60}
	
	\keywords{Kuratowski convergence, Hausdorff metric, Wasserstein metric, random sets, Karcher mean, non-Euclidean statistics, medoids}
	
	\date{\today}
	
	\begin{abstract}
		For $1\le p \le \infty$, the Fr\'echet $p$-mean of a probability measure on a metric space is an important notion of central tendency that generalizes the usual notions in the real line of mean ($p=2$) and median ($p=1$).
		In this work we prove a collection of limit theorems for Fr\'echet means and related objects, which, in general, constitute a sequence of random closed sets.
		On the one hand, we show that many limit theorems (a strong law of large numbers, an ergodic theorem, and a large deviations principle) can be simply descended from analogous theorems on the space of probability measures via purely topological considerations.
		On the other hand, we provide the first sufficient conditions for the strong law of large numbers to hold in a $T_2$ topology (in particular, the Fell topology), and we show that this condition is necessary in some special cases.
		We also discuss statistical and computational implications of the results herein.
	\end{abstract}
	
	\maketitle
	

	\section{Introduction}\label{sec:intro}
	For $0 \le r < \infty$ and a metric space $(X,d)$, denote by $\mathcal{P}_r(X)$ the set of Borel probability measures $\mu$ on $X$ such that $\int_X d^r(x,y) \, d\mu(y) < \infty$ for all $x \in X$.
	For $1\le p < \infty$, one defines the \textit{Fr\'echet $p$-mean} of a probability measure $\mu\in\mathcal{P}_p(X)$ to be the set
	\begin{equation}\label{eqn:Frechet-mean}
		F_p(\mu) := \underset{x\in X}{\arg\,\min}\int_{X}d^p(x,y)\, d\mu(y).
	\end{equation}
	By the \textit{empirical Fr\'echet $p$-mean} of finitely many points $y_1,\dots, y_n\in X$ we mean the Fr\'echet $p$-mean of their empirical measure, that is
	\begin{equation}\label{eqn:emp-Frechet-mean}
		F_p\left(\frac{1}{n}\sum_{i=1}^{n}\delta_{y_i}\right) = \underset{x\in X}{\arg\,\min}\sum_{i=1}^{n}d^p(x,y_i).
	\end{equation}
	We take these sets to be empty if no minimizer exists.
	In fact, we will later see that Fr\'echet $p$-means are actually well-defined via a slightly more involved definition
	for $\mu\in\mathcal{P}_{p-1}(X)$.
	
	We observe in Lemma~\ref{lem:Fp-closed-bdd} that the Fr\'echet $p$-mean is a closed set.
	It may have more than one point (for example, if $X = \R$ with the usual metric and $\mu= \frac{1}{2}\delta_0+\frac{1}{2}\delta_{1}$, then $F_1(\mu) = [0,1]$), and it also may be empty (for example, if $X = \R \setminus \{\frac{1}{2}\}$ equipped with the metric inherited from the usual metric on $\R$ and $\mu=\frac{1}{2}\delta_0+\frac{1}{2}\delta_{1}$, then $F_2(\mu) = \emptyset$).

	If $X$ is an inner product space and $d$ is the metric induced by its norm, then the Fr\'echet $2$-mean of the uniform measure on finitely many points coincides with their arithmetic mean; more generally, if $X$ is a separable Hilbert space, then the Fr\'echet $2$-mean of a Borel probability measure $\mu$ coincides with the Bochner integral $\int_{X}x\, d\mu(x)$.
	Moreover, for $X = \R^k$ with the usual metric, the Fr\'echet $1$-mean is exactly the geometric median, and, for $k=1$, the Fr\'echet $1$-mean coincides with the median.
	
	More broadly, the appeal of Fr\'echet means is that they generalize 
	common notions of central tendency in spaces where the structure of addition need not exist.
	For this reason, Fr\'echet $p$-means have become an important tool for the theory and practice of probability and statistics on non-Euclidean spaces \cite{ Chakraborty_2015_ICCV,StiefelStatistics,dryden2009non, fiori2009learning,Huckemann2015, le2000frechet,turner2014frechet}, especially in the recent the statistical theory of graph-valued random variables \cite{GinestetGraphs,UnlabeledGraphs}.
	The Fr\'echet 2-mean is by far the oldest and most important instance of the general theory, and is sometimes referred to directly as {\bf the} Fr\'echet mean or the \textit{Karcher mean}.
	
	Fr\'echet $p$-means also have a nice interpretation in the context of optimal transport:
	If $W_p$ represents the $p$-Wasserstein metric on $\mathcal{P}_p(X)$, then, since points in the Fr\'echet $p$-mean $F_p(\mu)$ are exactly minimizers of the map $x\mapsto W_p(\mu,\delta_x)$, they can be regarded as the best one-point approximations of $\mu$, as quantified by $W_p$.
	This perspective leads us to some interesting geometric insights about the structure of Fr\'echet $p$-means.
	
	In addition to their utility in statistics, Fr\'echet 2-means are also of great theoretical interest.
	Classically, they arose naturally in differential geometry, where they were used in the proof of Cartan's fixed point theorem for groups of isometries acting on a compact manifold of non-positive curvature \cite{Cartan}, and in understanding the algebraic structure of ``nearby'' actions of compact Lie groups on connected compact differentiable manifolds \cite{GroveKarcher}. More recently, they have been successfully used in optimal-transport-based metric thickening problems arising in algebraic topology \cite{MetricThickenings}.
	
	As Fr\'echet $p$-means purport to be a sort of average, a natural question is whether they satisfy generalizations of the limit theorems that hold in Euclidean spaces for averages of random sequences.
	The most basic of these is the law of large numbers and hence we are lead to ask:
	If $Y_1,Y_2,\ldots$ is an independent, identically distributed (i.i.d.) sequence of $X$-valued random elements with common distribution $\mu$, and if $\bar \mu_n$ is the empirical distribution of $Y_1, \ldots, Y_n$, does it follow that the empirical Fr\'echet $p$-means $F_p(\bar \mu_n)$ almost surely converge in some sense to the population Fr\'echet $p$-mean $F_p(\mu)$?
	
	A variant of the Fr\'echet means is also an important object of study in machine learning.
	In clustering tasks, when attempting to summarize a given collection of data via a single datum, it can desirable for various reasons that this datum be chosen, not among all points in the abstract space in which the data live, but rather among the existing data points; in these cases one is often interested in computing ``medoids'' \cite{LinearMedoids,UltraFastMedoids,Kaufman}.
	Similar constructions have been used in non-Euclidean statistics, like in the algorithms for computing prinicple nested spheres studied in \cite{Huckemann}.
	
	A mathematically precise definition of this procedure is as follows.
	For $1 \le p < \infty$, the \textit{support-restricted Fr\'echet $p$-mean} of a probability measure $\mu\in\mathcal{P}_p(X)$ is the set
	\begin{equation}\label{eqn:restr-Frechet-mean}
		F_p^\ast(\mu) := \underset{x\in \supp(\mu)}{\arg\,\min}\int_{X}d^p(x,y)\, d\mu(y),
	\end{equation}
	where $\supp(\mu):=\{x\in X: \text{ if } x\in V \subseteq X \text{ and $V$ is open, then } \mu(V) > 0\}$ is the (topological) support of $\mu$.
	By the \textit{empirical support-restricted Fr\'echet $p$-mean} of finitely many points $y_1,\dots y_n\in X$, we mean the support-restricted Fr\'echet $p$-mean of their empirical measure, that is
	\begin{equation}
		F_p^\ast\left(\frac{1}{n}\sum_{i=1}^{n}\delta_{y_i}\right) = \underset{y_j: j = 1,\ldots n}{\arg\,\min}\sum_{i=1}^{n}d^p(y_j,y_i).
	\end{equation}
	Some authors refer to empirical support-restricted Fr\'echet $p$-means as \textit{medoids} but other authors reserve the term for the case of $p=1$.
	As before, we take these sets by convention to be empty if no minimizer exists, and we will see that we can also suitably extend this definition to all $\mu\in\mathcal{P}_{p-1}(X)$.
	
	Following this motivation, another focus of this work is the question:
	If $Y_1,Y_2,\ldots$ is an i.i.d. sequence of $X$-valued random variables with common distribution $\mu$, and if $\bar \mu_n$ is the empirical distribution of $Y_1,\dots Y_n$, do the empirical support-restricted Fr\'echet $p$-means $F_p^\ast(\bar \mu_n)$ converge almost surely in some sense to the population support-restricted Fr\'echet $p$-mean $F_p^\ast(\mu)$?
	
	It will be useful for our later work to unify both Fr\'echet $p$-means and support-restricted Fr\'echet $p$-means under the same framework.
	The natural way to do this is as follows.
	For $1\le p < \infty$, $\mu\in \mathcal{P}_{p}(X)$, and a closed set $C\subseteq X$, define the \textit{$C$-restricted Fr\'echet $p$-mean} to be the set
	\begin{equation}\label{eqn:ext-Frechet-mean}
		F_p(\mu,C) := \underset{x\in C}{\arg\,\min}\int_{X}d^p(x,y)\, d\mu(y).
	\end{equation}
	As always, this set can be non-empty if no minimizers exist, and the notion can be suitably defined under the weaker moment assumption $\mu\in \mathcal{P}_{p-1}(X)$.
	In particular, note that $F_p(\mu,X)$ is exactly the Fr\'echet $p$-mean and that $F_p(\mu,\supp(\mu))$ is exactly the support-restricted Fr\'echet $p$-mean.
	In this language, our goal can simply be stated as understanding limit theorems for restricted Fr\'echet means, where we ``restrict'' to general sequences of random sets, including the trivial case of no restriction.
	
	The relationship between our restricted Fr\'echet means and the so-called \textit{generalized Fr\'echet means} of \cite{Huckemann2015} is subtle.
	On the one hand, our notion of the $C$-restricted Fr\'echet mean corresponds to taking (in the notation and language of \cite{Huckemann2015}) the data space as $Q:=X$ and the descriptor space as $P:=C$.
	On the other hand, this perspective slightly misses the point, since we need to allow $C$ to be a random set and we need to allow it to vary with the number of data $n$.
	For example, limit theorems for support-restricted Fr\'echet means require one to investigate the convergence $F_p(\bar \mu_n,\supp(\bar \mu_n))\to F_p(\mu,\supp(\mu))$;
	from the perspective of generalized Fr\'echet means, one would need to take $P_n := \supp(\bar \mu_n)$, but existing results are unable to say much about this matter.
	
	Even in the case of (unrestricted) Fr\'echet means, a primary technical difficulty that arises is that Fr\'echet means are typically sets with more than one point, so it is not clear how to state (let alone, how to prove) the limit theorems of interest.
	Many authors \cite{Bhattacharya,FrechetCLTs,ManifoldsI, ManifoldsII,BookCLTs} prefer to focus on a version of the problem which avoids this issue:
	They assume that the population Fr\'echet mean is a single point, and they consider a measurable selection of the empirical Fr\'echet mean.
	In this way, many results parallel their classical counterparts quite closely, and many strong conclusions, including central limit theorems \cite{FrechetCLTs,BookCLTs}, can be shown to hold.
	A particularly well-developed area of this theory is that in which $(X,d)$ arises from a Riemannian manifold; the monograph \cite{ManifoldInference} contains a comprehensive account of what is known in this realm.
	
	In the present work we embrace the generality that the restricted Fr\'echet means are random sets.
	This perspective was taken up by a few previous authors \cite{Ziezold,Sverdrup,GinestetGraphs,Huckemann} who found a meaningful sense in which the strong law of large numbers (SLLN) can be shown to hold:
	If $C_1,C_2,\ldots$ is a sequence of closed subsets of a metric space $(X,d)$, we define their \textit{Kuratowski upper limit} to be the closed set
	\[
	\underset{n\to\infty}{\kurouter}C_n =\bigcap_{n=1}^{\infty}\overline{\bigcup_{m=n}^{\infty}C_m}.
	\]
	Then, under some rather limiting assumptions, it was shown that empirical Fr\'echet means of IID samples converge to the population Fr\'echet means in the sense that we have $\kurouter_{n\to\infty}F_p(\bar \mu_n)\subseteq F_p(\mu)$ almost surely \cite{Ziezold,Sverdrup,Huckemann}; under similar assumptions it was also shown that empirical support-restricted Fr\'echet means converge to population support-restricted Fr\'echet means in the sense that we have $\kurouter_{n\to\infty}F_p^{\ast}(\bar \mu_n)\subseteq F_p^{\ast}(\mu)$ almost surely \cite{Sverdrup}.
	This notion of convergence is sometimes referred to as \textit{Ziezold consistency (ZC)} because of the seminal work \cite{Ziezold}.
	Note in both cases that we have inclusions rather than exact equality.
	
	Since almost sure containment of the Kuratowski upper limit is quite a weak notion of convergence, a few other authors have considered slightly stronger notions \cite{ManifoldsI,ManifoldsII}.
	That is, for closed bounded sets $C,C'\subseteq X$, we define their \textit{one-sided Hausdorff distance} to be
	\begin{equation*}
		\rho(C,C'):=\max_{x\in C}\min_{x'\in C'}d(x,x').
	\end{equation*}
	We note that $\rho$ is not a metric, and that we have $\rho(C,C') = 0$ if and only if $C\subseteq C'$.
	Then, under a few further assumptions \cite{ManifoldsI,ManifoldsII} on the metric space and on the population distribution, it has been shown that empirical Fr\'echet means of i.i.d. samples converge to the population Fr\'echet means in the sense that we have $\rho(F_p(\bar \mu_n),F_p(\mu))\to 0$ almost surely.
	This notion of convergence is sometimes called \textit{Bhattacharya-Patrangenaru consistency (BPC)} because of the seminal works \cite{ManifoldsI,ManifoldsII}
	We note that BPC can in some sense be viewed as a ``uniform'' version of ZC, and that BPC also features some form of inclusion rather than exact equality.
	
	Any result which establishes almost sure convergence of the empirical restricted Fr\'echet means of i.i.d. samples to the population restricted Fr\'echet mean, under a particular notion of convergence, will be referred to as a \textit{strong law of large numbers (SLLN)} for restricted Fr\'echet means.
	The results above, in this language, establish SLLNs for the Kuratowski upper convergence and for the one-sided Hausdorff convergence, under suitable assumptions.
	In certain settings, an SLLN may coincide with almost sure convergence in some topology, but, as we will later see, this is not always the case.
	With this level of generality in mind, the goal of this paper is to refine our understanding of these SLLNs, hopefully showing that they hold under minimal assumptions, and to understand when we have SLLNs in other better-understood senses of convergence (in the Fell topology, in the Hausdorff metric, etc.).
	
	As such, one contribution of the present work is to establish SLLNs for restricted Fr\'echet means in cases beyond those that are already covered by the seminal works \cite{Ziezold,Sverdrup,ManifoldsI,ManifoldsII} and by the most recent literature \cite{Huckemann}.
	In particular, for unrestricted Fr\'echet means and support-restricted Fr\'echet means, we establish that, if $(X,d)$ is separable and $\mu\in\mathcal{P}_{p-1}(X)$, then (Theorem~\ref{thm:Frechet-SLLN}) we have almost sure containment of the Kuratowski upper limit, and moreover that, if $(X,d)$ has the Heine-Borel property (meaning that its closed, bounded sets are compact) and $\mu\in\mathcal{P}_{p}(X)$, then (Theorem~\ref{thm:Frechet-SLLN-hausdorff}) we have almost sure convergence in the one-sided Hausdorff distance.
	In all of these settings, we also show that the same results hold even if one enlarges the restricted Fr\'echet mean set to a slightly larger set of \textit{relaxed} restricted Fr\'echet means.
	However, all of these notions of convergence are ``one-sided'', meaning that, while features appearing at the empirical level must also be present at the population level, there may be features at the population level which are not present at the empirical level.
	We also show that this ``one-sidedness'' can be strict (see Examples~\ref{ex:discrete-uniform}, \ref{ex:both-strict}, and \ref{ex:irrationals}), thereby answering an open question of Huckemann from \cite{HuckemannOberwolfach}.
	
	Hence, an additional contribution of this work is to establish some sufficient conditions under which we have ``two-sided'' convergence almost surely.
	In particular, we show (Theorem~\ref{thm:Fp-full-SLLN}) that, if $(X,d)$ has the Heine-Borel property, if $\mu\in\mathcal{P}_{p}(X)$, and if the population Fr\'echet mean is a singleton up to an explicit notion of equivalence that we introduce, then we have almost sure convergence in the Fell topology.
	We show (Theorem~\ref{thm:full-SLLN-finite}) that this sufficient condition is also necessary in the case that $(X,d)$ has finitely many points, but we do not know whether it is necessary in general Heine-Borel spaces.
	As far as we know, this is the first result of its kind in the general set-valued case.
	We also fill in a technical gap in much of the existing literature by clarifying some questions about the measurability of various random sets with respect to the Effros $\sigma$-algebra on the space of all closed subsets of $X$.
	
	Yet, perhaps more important than our concrete results is our method of proof.
	Most existing results in this realm \cite{ManifoldsI,ManifoldsII,GinestetGraphs,Sverdrup,Ziezold} proceed with a general approach which shows that the solution sets to a sequence of suitable stochastic optimization problems have an almost sure limiting solution set.
	While this approach is fruitful, it relies heavily on unnecessarily powerful results like uniform laws of large numbers.
	In contrast, our work develops the same results by directly studying continuity-type properties of the restricted Fr\'echet mean map.
	In particular, we show that the SLLN merely ``descends'' from an analogous strong limit theorem in a space of measures.
	This approach has two significant advantages:
	The first is that it allows for the possibility of descending other limit theorems in the space of measures to limit theorems for Fr\'echet means; for example, we prove an ergodic theorem (Theorem~\ref{thm:Fp-ergodic-MC}) and a large deviations principle (Theorem~\ref{thm:Fp-LDP}) with essentially no additional work.
	The second is that this approach allows for the possibility that the optimization region and the relaxation threshhold are chosen adaptively as a function of the data.
	This flexibility is exploited in two forthcoming works by the second author, one on strong consistency for adaptive clustering procedures which are variants of $k$-means \cite{JaffeClustering}, and one on finer statistical questions about estimting Fr\'echet mean sets \cite{Relaxation}.
	
	While previous research used the aforementioned ideas from stochastic optimization to prove only partial results on the almost sure one-sided convergence of Fr\'echet means, we remark that the concurrent work \cite{Schoetz2} uses the same techniques to prove results similarly powerful to our own.
	Specifically, the main theorems therein show, for (unrestricted) Fr\'echet means, that one gets almost sure containment of the Kuratowski upper limit whenever $(X,d)$ is separable and $\mu\in\mathcal{P}_{p-1}(X)$ and that one gets almost sure convergence in the one-sided version of the Hausdorff metric whenever $(X,d)$ is separable and has the Heine-Borel property and $\mu\in\mathcal{P}_{p-1}(X)$.
	The latter result is stronger than our own in the sense that we require the stricter moment assumption $\mu\in\mathcal{P}_{p}(X)$ for the case of one-sided Hausdorff convergence; we conjecture but are unable to prove that the stronger result should also follow from an analogous continuity-type theorem.
	However, we point out that the results of \cite{Schoetz2} do not cover the case of support-restricted Fr\'echet means, the case of two-sided convergence, nor the extension to other kinds of limit theorems.
	
	Finally, we provide some insight into a computational perspective on Fr\'echet means.
	Since (unrestricted) Fr\'echet means are notoriously difficult to compute exactly in all but a few simple settings, there is a vast literature on algorithmic aspects of computing Fr\'echet means \cite{FrechetCircle,Chakraborty_2015_ICCV,FrechetSphere,KarcherSOn,FrechetDifferentiating} especially in spaces of phylogenetic trees \cite{FrechetHadamard,TreeMean,TreeOptimality}; software packages like \cite{GeomStats} have implemented  these methods for easy use in applied statistics and machine learning problems.
	For support-restricted Fr\'echet  means, the drawbacks are similar; even though the empirical support-restricted Fr\'echet means of $n$ data points can be computed exactly in $O(n^2)$ steps, this is still intractable for large $n$, hence there is great interest in approximating $F_p^{\ast}(\bar \mu_n)$ in (sub)linear time, usually through various randomized algorithms \cite{LinearMedoids,UltraFastMedoids}.
	Our work suggests that, unless it is known that the SLLN holds in some two-sided sense (say, in the Fell topology), then more sophisticated algorithms (and, possibly, more computation) may be needed.
	
	The remainder of this paper is structured as follows.
	In Section~\ref{sec:topology} we detail some notions of convergence of measures and of closed sets which will be used throughout the paper.
	In Section~\ref{sec:measurability} we clarify some measurability properties of restricted Fr\'echet means and other random sets, and Section~\ref{sec:results} contains the statements and proofs of the main probabilistic results.
	
	\section{Topological Preliminaries}\label{sec:topology}
	
	In this section we describe various topologies on suitable spaces of measures and various notions of convergence on suitable spaces of closed subsets of a given metric space, and these are the core of the work in the subsequent sections.
	While most of these notions can be defined in a more general setting where $X$ is just assumed to be a topological space, we are only interested in the setting where $X$ is a metric space, and this will simplify our work slightly.
	From here on we thus always assume that $X$ is a set with metric $d$ unless otherwise stated. 
	
	To begin, we describe a few spaces of measures and various topologies on them.  
	Write $\mathcal{P}(X)$ for the Borel probability measures on $(X,d)$, and write $\supp(\mu)$ for the closed set $\{x\in X: \text{ if } x\in V \subseteq X \text{ open, then } \mu(V) > 0\}$ called the \textit{support} of $\mu\in\mathcal{P}(X)$.
	Define $\mathcal{P}_r(X)$ for $r> 0$ to be the collection of all $\mu\in\mathcal{P}(X)$ satisfying $\int_{X}d^r(x,y) \, d\mu(y) < \infty$ for all $x\in X$, and observe that this coincides with our earlier definition as long as $1\le r < \infty$.
	In this paper we always adopt the convention that $0^0 = 1$ in order to avoid checking the $r=0$ case separately; in particular, we have $\mathcal{P}_0(X) = \mathcal{P}(X)$.
	
	Next we derive an elementary but important inequality.
	If $0 \le r \le 1$, then the subadditivity of the function $t\mapsto t^r$ and the triangle inequality yield
	\begin{equation}
		\label{eqn:distance_comparison_subadditive}
		d^r(x',y) \le d^r(x',x'') + d^r(x'',y), \quad x',x'',y \in X,
	\end{equation}
	and if $1\le r < \infty$, then the convexity of the function $t \mapsto t^r$ and the triangle inequality yield
	\begin{equation}
		\label{eqn:distance_comparison_convex}
		d^r(x',y) \le 2^{r-1}(d^r(x',x'') + d^r(x'',y)), \quad x',x'',y \in X.
	\end{equation}
	Thus for $r \ge 0$ we have
	\begin{equation}
		\label{eqn:distance_comparison}
		d^r(x',y) \le c_r(d^r(x',x'') + d^r(x'',y)), \quad x',x'',y \in X.
	\end{equation}
	where $c_r =\max\{2^{r-1},1\}$.
	Consequently, note that, for $r\ge 0$, a distribution $\mu\in\mathcal{P}(X)$ satisfies $\mu\in \mathcal{P}_{r}(X)$ if and only if we have $\int_{X}d^r(x,y) \, d\mu(y) < \infty$ for {\bf some} $x\in X$.
	
	For $r\ge 0, \mu\in\mathcal{P}_r(X)$, and $x\in X$, we define the value $g_r(\mu,x)$ as
	\begin{equation}
		g_{r}(\mu,x) := \int_{X}d^r(x,y)\, d\mu(y).
	\end{equation}
	In general let us also write $g_r(\mu,x) =: g_{r,\mu}(x) =: g_{r,x}(\mu)$ whenever we wish to emphasize that certain arguments are held fixed while others are allowed to vary.
	
	Let us endow $\mathcal{P}(X)$ with the topology of weak convergence which we denote by $\weak$. 
	Further, for $r\ge 0$, let us say that a sequence $\{\mu_n\}_{n \in \N}$ in $\mathcal{P}_r(X)$ \textit{converges weakly in $\mathcal{P}_r(X)$} to $\mu \in \mathcal{P}_r(X)$ if $\mu_n\to \mu$ in $\weak$ and $g_{r,x}(\mu_n)\to g_{r,x}(\mu)$ as $n\to\infty$ for all $x\in X$; this notion of convergence arises from a metrizable topology $\tau_{w}^{r}$ on $\mathcal{P}_r(X)$.
	
	\begin{lemma}
		\label{lem:one_x_all_x}
		Suppose $r\ge 0$ and that the sequence $\{\mu_n\}_{n \in \N}$ in $\mathcal{P}_r(X)$ converges in $\weak$ to $\mu \in \mathcal{P}_r(X)$.  The following are equivalent:
		\begin{itemize}
			\item[(i)]
			$g_{r,\mu_{n}}(x) \to g_{r,\mu}(x)$ for {\bf all} $x\in X$ (so that $\mu_n \to \mu$ in $\tau_{w}^{r}$);
			\item[(ii)] $g_{r,\mu_{n}}(x) \to g_{r,\mu}(x)$ for {\bf some} $x\in X$.
		\end{itemize}
	\end{lemma}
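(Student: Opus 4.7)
The implication (i) $\Rightarrow$ (ii) is of course immediate. For the converse, the plan is to deduce convergence of $f_{p,\mu_n}(x)$ for an arbitrary $x \in X$ from convergence at the single fixed $x_0 \in X$ by playing off the weak convergence against the pointwise comparison furnished by (\ref{eqn:distance_comparison}). The idea is a standard one in the optimal transport literature: weak convergence alone gives a one-sided (lower) bound on nonnegative continuous integrals via Portmanteau, and the assumed convergence at $x_0$ lets us convert the domination $d^p(x,\cdot) \le 2^{p-1}(d^p(x,x_0) + d^p(x_0,\cdot))$ into a matching upper bound.

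More concretely, I would fix $x \in X$ and carry out two applications of the lower-semicontinuous Portmanteau inequality, which says that for any nonnegative continuous $g \colon X \to [0,\infty)$ and any sequence $\nu_n \to \nu$ in $\weak$ one has $\liminf_{n} \int g \, d\nu_n \ge \int g \, d\nu$. First, apply this to the nonnegative continuous function $y \mapsto d^p(x,y)$ to obtain
\[
\liminf_{n\to\infty} f_{p,\mu_n}(x) \ge f_{p,\mu}(x).
\]
Second, set $C := d^p(x,x_0)$ and apply Portmanteau to the function
\[
y \mapsto 2^{p-1} C + 2^{p-1} d^p(x_0,y) - d^p(x,y),
\]
which is continuous and, by (\ref{eqn:distance_comparison}), nonnegative. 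Since $\mu_n,\mu \in \mathcal{P}_p(X)$, all the relevant integrals are finite and may be split; the assumption $f_{p,\mu_n}(x_0) \to f_{p,\mu}(x_0)$ then lets me replace the $\liminf$ of the $d^p(x_0,\cdot)$ term by a true limit, and the $\liminf$ of the $-d^p(x,\cdot)$ term becomes $-\limsup_{n} f_{p,\mu_n}(x)$. After cancellation, the resulting inequality simplifies to
\[
\limsup_{n\to\infty} f_{p,\mu_n}(x) \le f_{p,\mu}(x),
\]
and combining with the lower bound above yields (i).

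The only subtle step is the lower-semicontinuous Portmanteau inequality itself, since nonnegative continuous functions need not be bounded; the standard fix is to approximate $g$ from below by the bounded continuous functions $g \wedge M$, invoke weak convergence for each $M$, and then pass to the supremum in $M$ using monotone convergence. I expect this to be the only point that needs any care, the rest being bookkeeping with limits superior and inferior.
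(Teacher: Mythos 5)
Your argument is correct, but it takes a genuinely different route from the paper's. The paper works probabilistically: it realizes $\mu_n$ and $\mu$ as the laws of random elements $Z_n$ and $Z$, applies the continuous mapping theorem to get $d^p(x',Z_n) \to d^p(x',Z)$ in distribution, invokes the standard equivalence (for nonnegative random variables converging in distribution) between convergence of means and uniform integrability to conclude that $\{d^p(x,Z_n)\}_{n\in\N}$ is uniformly integrable, and then transfers that uniform integrability to $\{d^p(x',Z_n)\}_{n\in\N}$ via the domination \eqref{eqn:distance_comparison}. You instead stay entirely on the analytic side, applying the one-sided Portmanteau inequality for nonnegative continuous (possibly unbounded) integrands twice --- once to $d^p(x,\cdot)$ itself for the lower bound, and once to the nonnegative defect $2^{p-1}d^p(x,x_0) + 2^{p-1}d^p(x_0,\cdot) - d^p(x,\cdot)$ for the upper bound --- with \eqref{eqn:distance_comparison} now serving only to guarantee nonnegativity of the second integrand. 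Both proofs pivot on the same inequality; yours avoids uniform integrability entirely and is the argument familiar from the optimal-transport characterization of $W_p$-convergence, at the cost of having to justify the truncation step for unbounded Portmanteau (which you correctly flag) and a small bookkeeping point that the displayed inequality forces $\limsup_n f_{p,\mu_n}(x) < \infty$ before the cancellation is legitimate; the paper's route is shorter given the cited uniform-integrability lemma and matches the probabilistic framing it reuses in Lemma~\ref{lem:Wasserstein-relations}.
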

	
	\begin{proof}
		We need only prove that (ii) implies (i); that is, we need to show for $x, x' \in X$ that if $\int_{X}d^r(x,y)\, d\mu_n(y) \to \int_{X}d^r(x,y)\, d\mu(y)$, then $\int_{X}d^r(x',y)\, d\mu_n(y) \to \int_{X}d^r(x',y)\, d\mu(y)$.
		Suppose that $(Z_n)_{n \in \N}$ and $Z$ are random elements of $X$, where $Z_n$, $n \in \N$, has distribution $\mu_n$ and $Z$ has distribution $\mu$.
		Put $W_n= d^r(x, Z_n)$, $W = d^r(x,Z)$, $W_n'= d^r(x', Z_n)$, and $W' = d^r(x',Z)$.
		By assumption, $Z_n$ converges to $Z$ in distribution.
		Therefore, by the continuous mapping theorem \cite[Theorem~4.27]{Kallenberg}, $W_n$ converges to $W$ and $W_n'$ converges to $W'$ in distribution.
		Also by assumption, $\E[W_n]$ converges to $\E[W]$, so it follows that the sequence $\{W_n\}_{n \in \N}$ is uniformly integrable \cite[Lemma~4.11]{Kallenberg}.
		By \eqref{eqn:distance_comparison}, $W_n' \le c_{p}(d^r(x,x') + W_n)$ and so the sequence $\{W_n'\}_{n \in \N}$ is also uniformly integrable.
		Consequently, $\E[W_n']$ converges to $\E[W']$, again by \cite[Lemma~4.11]{Kallenberg}.
	\end{proof}
	
	The obvious relationships between these spaces and these topologies are summarized in the following result.
	The proof is elementary, and is hence omitted for the sake of brevity.
	
	\begin{lemma}\label{lem:Wasserstein-relations}
		For $0 \le r \le r' < \infty$, there is the inclusion $\mathcal{P}_{r'}(X) \subseteq \mathcal{P}_r(X)$, and the restriction of $\tau_{w}^{r}$ to $\mathcal{P}_{r'}(X)$ is weaker than $\tau_{w}^{r'}$.
		Moreover, if $(X,d)$ is compact, then $\mathcal{P}_r(X) = \mathcal{P}(X)$ and $\tau_{w}^{r} = \weak$ for all $0 \le r < \infty$.
	\end{lemma}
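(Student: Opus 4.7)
The plan is to tackle the three assertions in order. For the inclusion $\mathcal{P}_{p'}(X) \subseteq \mathcal{P}_p(X)$, I would fix $x \in X$ and apply Jensen's inequality to the convex function $t \mapsto t^{p'/p}$ (which is convex because $p'/p \ge 1$) against the probability measure $\mu \in \mathcal{P}_{p'}(X)$, giving
\begin{equation*}
\left(\int_X d^p(x,y)\, d\mu(y)\right)^{p'/p} \le \int_X d^{p'}(x,y)\, d\mu(y) < \infty,
\end{equation*}
so $\mu \in \mathcal{P}_p(X)$.

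For the comparison of topologies, suppose $\mu_n \to \mu$ in $\tau_w^{p'}$. Then in particular $\mu_n \to \mu$ in $\weak$, so it suffices by Lemma~\ref{lem:one_x_all_x} to check that $f_{p,x}(\mu_n) \to f_{p,x}(\mu)$ for a single $x \in X$. Arguing as in the proof of Lemma~\ref{lem:one_x_all_x}, pick random elements $Z_n \sim \mu_n$ and $Z \sim \mu$ with $Z_n \to Z$ in distribution, and set $W_n := d^p(x, Z_n)$ and $V_n := d^{p'}(x,Z_n) = W_n^{p'/p}$. Convergence in $\tau_w^{p'}$ gives $\E[V_n] \to \E[V]$, so in particular $\sup_n \E[W_n^{p'/p}] < \infty$; since $p'/p \ge 1$, this uniform moment bound implies that $\{W_n\}_{n \in \N}$ is uniformly integrable. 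Combined with $W_n \to W$ in distribution (by the continuous mapping theorem), this yields $\E[W_n] \to \E[W]$, that is, $f_{p,x}(\mu_n) \to f_{p,x}(\mu)$, as required.

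For the compact case, the key observation is that $d$ is bounded on $X \times X$, so for any $x \in X$ and $\mu \in \mathcal{P}(X)$ we immediately have $\int_X d^p(x,y)\, d\mu(y) \le (\mathrm{diam}(X))^p < \infty$, giving $\mathcal{P}_p(X) = \mathcal{P}(X)$. Since $\weakp$ refines $\weak$ by definition, the nontrivial direction is to show $\weakp \subseteq \weak$ when $X$ is compact. But if $\mu_n \to \mu$ in $\weak$, then for each $x \in X$ the map $y \mapsto d^p(x,y)$ is continuous and bounded on $X$, so the portmanteau theorem gives $f_{p,x}(\mu_n) \to f_{p,x}(\mu)$, hence $\mu_n \to \mu$ in $\weakp$.

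The only step requiring any real care is the uniform integrability argument in the second part; everything else is a direct application of Jensen's inequality or the portmanteau theorem. I do not anticipate any serious obstacles, since the proof of Lemma~\ref{lem:one_x_all_x} already lays out the template, and the key additional input—that an $L^{p'/p}$-moment bound on a nonnegative sequence gives uniform integrability—is a standard fact from \cite[Lemma~4.10]{Kallenberg} (or a direct de la Vallée-Poussin argument).
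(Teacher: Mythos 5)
Your overall strategy matches the paper's: Jensen for the inclusion, a uniform-integrability argument for the comparison of topologies, and boundedness of $d$ plus the definition of weak convergence for the compact case. Parts one and three are correct and essentially identical to the paper's proof (the paper phrases the compact case via dominated convergence for random variables, but that is the same computation as your portmanteau step).

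There is, however, one step in the second part whose justification fails in a boundary case. You claim that because $p'/p \ge 1$, the uniform moment bound $\sup_n \E[W_n^{p'/p}] < \infty$ implies that $\{W_n\}_{n\in\N}$ is uniformly integrable. This implication requires $p'/p > 1$ strictly: for $p = p'$ it reduces to the assertion that a bounded first moment implies uniform integrability, which is false (the de la Vall\'ee-Poussin criterion needs a function growing superlinearly). The case $p = p'$ is of course trivial for the lemma itself, since then $\weakp$ and $\tau_w^{p'}$ coincide, so your proof is easily repaired by either treating that case separately or by requiring $p < p'$ before invoking the moment bound. The paper avoids the issue entirely by a slightly different route to uniform integrability: it first deduces that $\{W_n^{p'}\}_{n\in\N}$ is uniformly integrable from the \emph{convergence} $\E[W_n^{p'}] \to \E[W^{p'}]$ together with convergence in distribution (\cite[Lemma~4.11]{Kallenberg}), and then transfers uniform integrability down to $\{W_n^{p}\}_{n\in\N}$ via the pointwise domination $a^p \le 1 + a^{p'}$; this argument is valid uniformly in $1 \le p \le p'$. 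Your version, once restricted to $p < p'$, is marginally more economical in that it uses only boundedness rather than convergence of the higher moments, but you should state the strict inequality it needs.
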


	We also require the following result which is elementary and well-known.
	We don't know where it is proved in the literature, so we provide the simple proof.
	
	\begin{lemma}
		\label{lem:Peter_Paul}
		Given $r\ge0$ and $\varepsilon > 0$, there exists some constant $c_{r,\varepsilon} > 0$ such that $(a+b)^r \le (1+\varepsilon)a^r + c_{r,\varepsilon}b^r$ for all $a,b\ge 0$.  Consequently, for $x',x'',y \in X$ there is the inequality $d^r(x',y) \le (1+\varepsilon) d^r(x'',y) + c_{r,\varepsilon} d^r(x',x'')$.
	\end{lemma}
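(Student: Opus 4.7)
The plan is to reduce the scalar inequality to a one-parameter application of Jensen's inequality, with the parameter tuned so that the coefficient on $a^p$ comes out to exactly $1+\varepsilon$. The case $p=1$ is immediate (take $c_\varepsilon = 1$), so I assume $p > 1$.

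For any $\lambda \in (0,1)$, write
\[
a + b = \lambda \cdot \frac{a}{\lambda} + (1-\lambda) \cdot \frac{b}{1-\lambda}
\]
as a convex combination and apply convexity of $t \mapsto t^p$ to obtain
\[
(a+b)^p \le \lambda \cdot \frac{a^p}{\lambda^p} + (1-\lambda) \cdot \frac{b^p}{(1-\lambda)^p} = \lambda^{1-p} a^p + (1-\lambda)^{1-p} b^p.
\]
Now choose $\lambda := (1+\varepsilon)^{-1/(p-1)}$, which lies in $(0,1)$ because $1+\varepsilon > 1$ and $p-1 > 0$. This choice forces $\lambda^{1-p} = 1+\varepsilon$, so setting $c_\varepsilon := (1-\lambda)^{1-p}$ yields the desired inequality.

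For the consequence, apply the scalar bound with $a = d(x'',y)$ and $b = d(x',x'')$; monotonicity of $t \mapsto t^p$ and the triangle inequality $d(x',y) \le d(x'',y) + d(x',x'')$ give
\[
d^p(x',y) \le (d(x'',y)+d(x',x''))^p \le (1+\varepsilon) d^p(x'',y) + c_\varepsilon d^p(x',x'').
\]

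There is no real obstacle here, only a minor choice of formulation: once one writes the convex combination with weight $\lambda$ and solves $\lambda^{1-p} = 1+\varepsilon$, the constant $c_\varepsilon$ is forced algebraically. Equivalent routes are Young's inequality with conjugate exponents $p/(p-1)$ and $p$, or a direct verification that $c_\varepsilon \ge \sup_{t>0} \bigl((1+t)^p - (1+\varepsilon)\bigr)/t^p$ suffices, the supremum being finite since the supremand is continuous on $(0,\infty)$ with limit $1$ at $\infty$ and limit $-\infty$ at $0^+$.
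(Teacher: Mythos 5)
Your proof is correct, but it takes a different route from the paper's. The paper works directly with the ratio $t = b/a$ and splits into two cases: for $t \le (1+\varepsilon)^{1/p}-1$ it bounds $(1+t)^p$ by $1+\varepsilon$ alone, and for larger $t$ it bounds $(1+t)^p/t^p$ by its value at the left endpoint, reading off $c_\varepsilon = \bigl(1/((1+\varepsilon)^{1/p}-1)+1\bigr)^p$. You instead apply convexity of $t \mapsto t^p$ to the weighted decomposition $a+b = \lambda(a/\lambda) + (1-\lambda)(b/(1-\lambda))$ and tune $\lambda$ so the coefficient of $a^p$ is exactly $1+\varepsilon$. Your route is slicker and in fact produces the sharp constant $c_\varepsilon = (1-\lambda)^{1-p}$ (the weighted power-mean inequality is tight), at the small cost of having to treat $p=1$ separately since $\lambda = (1+\varepsilon)^{-1/(p-1)}$ degenerates there; the paper's elementary case analysis handles all $p \in [1,\infty)$ uniformly but gives a non-optimal constant, which is all that is needed. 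Both arguments are complete, and your handling of the second claim (triangle inequality plus monotonicity of $t\mapsto t^p$) matches the paper's.
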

	
	\begin{proof}
		The second claim follows follows from the first and the triangle inequality, so we concentrate on the first claim.
		Also note that, if $0 \le r \le 1$, then the function $t\mapsto t^r$ is subadditive, so the result holds trivially with $c_{r,\varepsilon} =1$.
		Hence, we only need to consider the case of $1\le r < \infty$.
		To do this, suppose first that $a > 0$. 
		Setting $u = \frac{b}{a}$, we need to show that $(1 + u)^r \le (1+\varepsilon) + c_{r,\varepsilon}u^r$, $u \ge 0$, for a suitable constant $c_{r,\varepsilon}$.
		For $0 \le u \le (1+\varepsilon)^{\frac{1}{r}} - 1$ we have $(1 + u)^r \le (1+\varepsilon)$, and for $u \ge (1+\varepsilon)^{\frac{1}{r}} - 1$ we have
		\[
		\frac{(1+u)^r}{u^r} = 
		\left(\frac{1}{u} + 1\right)^r
		\le \left(\frac{1}{(1+\varepsilon)^{\frac{1}{r}} - 1} + 1\right)^r,
		\]
		so the desired inequality holds for $a > 0$ with $c_{r,\varepsilon}$ given by the right-hand side.
		Since this quantity is greater than $1$, it is clear that the desired inequality continues to hold for $a=0$ with this choice of $c_{r,\varepsilon}$.
	\end{proof}
	
	\begin{remark}
		The inequality in Lemma~\ref{lem:Peter_Paul} is akin to
		\eqref{eqn:distance_comparison}	and as such it is sometimes referred to as a ``rob Peter to pay Paul'' inequality or a ``Peter-Paul'' inequality, since for $r\ge0$ and small $\varepsilon$ it shows that we can reduce the coefficient of $d^r(x'',y)$ in the right-hand side of the inequality by increasing the coefficient of $d^r(x',x'')$.
	\end{remark}
	
	\begin{lemma}\label{lem:gp-cts}
		For any $r\ge 0$, the function $g_r: \mathcal{P}_r(X)\times X\to [0,\infty)$ is continuous.
	\end{lemma}
	
	\begin{proof}
		Suppose that $\{(\mu_n,x_n)\}_{n\in\N}$ in $\mathcal{P}_r(X)\times X$ has $(\mu_n,x_n)\to (\mu,x)\in \mathcal{P}_r(X)\times X$.
		Let $\varepsilon > 0$ be arbitrary, and get $c_{r,\varepsilon} > 0$ as in Lemma~\ref{lem:Peter_Paul}.
		Then note that
		\begin{align*}
			\limsup_{n \to \infty}g_{r}(\mu_n,x_n) &= \limsup_{n \to \infty}\int_{X}d^r(x_n,y)\, d\mu_n(y) \\
			&\le \limsup_{n \to \infty}\int_{X}\left(c_{r,\varepsilon}d^r(x_n,x)+(1+\varepsilon)d^r(x,y)\right)\, d\mu_n(y) \\
			&= \limsup_{n \to \infty}\left(c_{r,\varepsilon}d^r(x_n,x)+(1+\varepsilon)\int_{X}d^r(x,y)\, d\mu_n(y)\right) \\
			&= (1+\varepsilon)\limsup_{n \to \infty}\int_{X}d^r(x,y)\, d\mu_n(y) \\
			&= (1+\varepsilon)\int_{X}d^r(x,y)\, d\mu(y) \\
			&= (1+\varepsilon)g_r(\mu,x),
		\end{align*}
		where the penultimate equality holds by the definition of $\tau_{w}^{r}$.
		On the other hand, we also have
		\begin{align*}
			g_r(\mu,x) &= \lim_{n \to \infty}\int_{X}d^r(x,y)\, d\mu_n(y) \\
			&\le \liminf_{n \to \infty}\int_{X}\left(c_{r,\varepsilon}d^r(x,x_n)+(1+\varepsilon)d^r(x_n,y)\right)\, d\mu_n(y) \\
			&= \liminf_{n \to \infty}\left(c_{r,\varepsilon}d^r(x,x_n)+(1+\varepsilon)\int_{X}d^r(x_n,y)\, d\mu_n(y)\right) \\
			&= (1+\varepsilon)\liminf_{n \to \infty}\int_{X}d^r(x_n,y)\, d\mu_n(y) \\
			&= (1+\varepsilon)\liminf_{n \to \infty}g_r(\mu_n,x_n).
		\end{align*}
		Taking $\varepsilon\downarrow 0$ then gives
		\begin{equation}
			\limsup_{n \to \infty}g_r(\mu_n,x_n) \le g_r(\mu,x)\le \liminf_{n \to \infty}g_r(\mu_n,x_n),
		\end{equation}
		which proves the result.
	\end{proof}
	
	Next we introduce another elementary but crucial inequality.
	In the remainder of this section, let $1\le p < \infty$ be fixed.
	
	\begin{lemma}\label{lem:pth-power-difference-bound}
		For any $a,b\ge 0$, we have the inequality $|a^p-b^p|\le p|a-b|(a^{p-1}+b^{p-1})$.
		Consequently, for $x',x'',y\in X$ there is the inequality $|d^p(x',y)-d^p(x'',y)|\le pd(x',x'')(d^{p-1}(x',y)+d^{p-1}(x'',y))$.
	\end{lemma}
	
	\begin{proof}
		The second statement follows from the first and the triangle inequality, so we focus on the first.
		Without loss of generality we may take $a \le b$, hence
		\begin{equation*}
			b^p-a^p = \left|\int_{a}^{b}pu^{p-1}du\right| \le p(b-a)\max\{a^{p-1},b^{p-1}\}\le p(b-a)(a^{p-1}+b^{p-1}),
		\end{equation*}
		as claimed.
	\end{proof}
	
	The preceding inequality allows us to apply a sort of ``renormalization'' trick.
	That is, for $x,z\in X$, and $\mu\in\mathcal{P}_{p-1}(X)$, we have
	\begin{align*}
		&\int_{X}|d^p(x,y)-d^p(z,y)|\, d\mu(y) \\
		&\qquad\le pd(x,z)\left(\int_{X}d^{p-1}(x,y)\, d\mu(y) + \int_{X}d^{p-1}(z,y)\, d\mu(y)\right) < \infty,
	\end{align*}
	which means that the function $d^p(x,\cdot)-d^p(z,\cdot)$ is $\mu$-integrable; consequently, the value
	\begin{equation}
		f_p(\mu,x,z) := \int_{X}(d^p(x,y)-d^p(z,y))\, d\mu(y)
	\end{equation}
	is well-defined.
	As before, let us move certain arguments into the subscript if we wish to emphasize that they are held fixed while others are allowed to vary.
	
	Note that $f_p$ satisfies some elementary but important identities, like  $f_{p,\mu}(x,z) = -f_{p,\mu}(z,x)$ and $f_{p,\mu}(x,z) = f_{p,\mu}(x,x')- f_{p,\mu}(x',z)$.
	However, the validity of $f_p(\mu,x,z) = g_{p}(\mu,x)-g_p(\mu,z)$, which is equivalent to
	\begin{equation}\label{eqn:renorm-separation}
		\int_{X}(d^p(x,y)-d^p(z,y))\, d\mu(y) = \int_{X}d^p(x,y)\, d\mu(y) - \int_{X}d^p(z,y)\, d\mu(y),
	\end{equation}
	is more subtle:
	If $\mu\in\mathcal{P}_p(X)$, then \eqref{eqn:renorm-separation} is indeed true, but, if $\mu\in \mathcal{P}_{p-1}(X)\setminus\mathcal{P}_{p}(X)$, the right side of \eqref{eqn:renorm-separation} is the ill-defined expression $\infty-\infty$.
	Thus, the separation $f_p(\mu,x,z) = g_{p}(\mu,x)-g_p(\mu,z)$ holds only when we have the additional moment $\mu\in \mathcal{P}_{p}(X)$.
	
	\begin{lemma}\label{lem:fp-cts}
		The function $f_p:\mathcal{P}_{p-1}(X)\times X^2\to \R$ is continuous.
	\end{lemma}
	
	\begin{proof}
		Suppose that $\{(\mu_n,x_n,z_n)\}_{n\in\N}$ in $\mathcal{P}_{p-1}(X)\times X^2$ has $(\mu_n,x_n,z_n)\to (\mu,x,z) \in \mathcal{P}_{p-1}(X)\times X^2$.
		We bound
		\begin{align*}
			&|f_p(\mu_n,x_n,z_n) - f_p(\mu,x,z)| \\
			&\qquad \le |f_p(\mu_n,x_n,x)| + |f_p(\mu_n,z_n,z)| + |f_p(\mu_n,x,z)-f_p(\mu,x,z)|, 
		\end{align*}
		so it suffices to show that all terms on the right go to zero.
		
		For the first term, note that Lemma~\ref{lem:pth-power-difference-bound} gives the inequality $|f_p(\mu_n,x_n,x)| \le pd(x_n,x)(g_{p-1}(\mu_n,x_n) + g_{p-1}(\mu_n,x))$.
		Then by Lemma~\ref{lem:gp-cts}, we have $g_{p-1}(\mu_n,x_n) + g_{p-1}(\mu_n,x)\to 2g_{p-1}(\mu,x) < \infty$ while $d(x_n,x)\to 0$, hence $|f_p(\mu_n,x_n,x)|\to 0$.
		The same argument shows $|f_p(\mu_n,z_n,z)|\to 0$.
		
		It only remains to consider the third term.
		For each $n\in\N$ let the random variable $Y_n$ have law $\mu_n$, and let $Y$ have law $\mu$.
		Then set $W_n = d_p(x,Y_n) - d^p(z,Y_n)$ and $W = d^p(x,Y) - d^p(z,Y)$.
		By the continuous mapping theorem \cite[Theorem~4.27]{Kallenberg}, $W_n$ converges to $W$ in distribution.
		Moreover, we have $W_n \le pd(x,z)(d^{p-1}(x,Y_n) + d^{p-1}(z,Y_n)$ by Lemma~\ref{lem:pth-power-difference-bound}.
		Since $pd(x,z)\E[d^{p-1}(x,Y_n) + d^{p-1}(z,Y_n)]\to pd(x,z)\E[d^{p-1}(x,Y) + d^{p-1}(z,Y)]$ by assumption, this implies that $\{W_n\}_{n\in\N}$ is uniformly integrable by \cite[Lemma~4.11]{Kallenberg}.
		In particular, we have $\E[W_n]\to \E[W]$ hence $f_p(\mu_n,x,z)\to f_p(\mu,x,z)$.
		Combining this finishes the proof.
	\end{proof}
	
	\begin{lemma}\label{lem:Fp-characterizations}
		Suppose $\mu\in \mathcal{P}_{p-1}(X),C\subseteq X$ is closed, and $\eta \ge 0$.
		Then, for $x\in C$, the following are equivalent:
		\begin{enumerate}
			\item[(i)] For all $z\in C$ we have $f_p(\mu,x,z)\le \eta$.
			\item[(ii)] For any $o\in X$, we have $f_p(\mu,x,o)\le\min_{z\in C}f_{p}(\mu,z,o) + \eta$.
		\end{enumerate}
		If $\mu\in\mathcal{P}_p(X)$, then the above are also equivalent to
		\begin{enumerate}
			\item[(iii)] $g_p(\mu,x)\le\min_{z\in C}g_{p}(\mu,z) + \eta$.
		\end{enumerate}
	\end{lemma}
	
	\begin{proof}
		Let $x\in C$ be arbitrary.
		Suppose that (i) holds, and let $o\in X$ and $z\in C$ be arbitrary.
		Since $f_p(\mu,x,z) = f_p(\mu,x,o) - f_p(\mu,z,o)$, we can simply rearrange to get (ii).
		Conversely, suppose that (ii) holds, and let $z\in C$ be arbitrary.
		Then take $o=z$ to see that (i) follows.
		Next suppose that we furthermore have $\mu\in\mathcal{P}_p(X)$.
		Suppose (i) holds, and let $z\in C$ be arbitrary.
		Since $f_p(\mu,x,z) = g_p(\mu,x) - g_p(\mu,z)$, we can simply rearrange to conclude (iii).
		Conversely, suppose that (iii) holds.
		Then for any $z\in C$ we again use $f_p(\mu,x,z) = g_p(\mu,x) - g_p(\mu,z)$ and rearrange to see that (i) holds.
	\end{proof}
	
	Now we can define restricted Fr\'echet means in their full generality.
	
	\begin{definition}
		For $\mu\in \mathcal{P}_{p-1}(X),C\subseteq X$ closed, and $\eta \ge 0$, denote the \textit{$\eta$-relaxed $C$-restricted Fr\'echet $p$-mean of $\mu$} as $F_p(\mu,C,\eta)$, which is defined to be the set of all $x\in C$ satisfying any of the equivalent conditions of Lemma~\ref{lem:Fp-characterizations}.
		This set can be empty if no such points exist.
	\end{definition}
	
	Observe now that $\mu\in\mathcal{P}_p(X)$ implies $F_p(\mu,X,0) = F_p(\mu)$ and $F_p(\mu,\supp(\mu),0) = F_p^{\ast}(\mu)$, so the above is indeed a generalization of the definitions given in Section~\ref{sec:intro}.
	Moreover, the argument $\eta \ge 0$ represents an approximation parameter and allows us to study the sets of points which ``nearly'' constitute restricted Fr\'echet means.
	Let us also write $F_p(\mu,\eta) := F_p(\mu,X,\eta)$ and $F_p^{\ast}(\mu,\eta) := F_p(\mu,\supp(\mu),\eta)$ for the sake of simplicity.
	Finally, note also that the solution set of optimization problem in (ii) does not depend on the parameter $o\in X$; this represents a choice of ``origin'' but the arbitrary choice does not affect the minimizers.
	When no choice of $\eta$ or $C$ is specified, we simply refer to these objects as \textit{(relaxed) restricted Fr\'echet mean sets}.
	
	\begin{lemma}\label{lem:Fp-closed-bdd}
		For $\mu\in \mathcal{P}_{p-1}(X),C\subseteq X$ closed, and $\eta \ge 0$, the relaxed restricted Fr\'echet mean set $F_p(\mu,C,\eta)$ is closed in $X$.
		If additionally $\mu\in \mathcal{P}_{p}(X)$, then $F_p(\mu,C,\eta)$ is bounded.
	\end{lemma}
	
	\begin{proof}
		For the first claim, simply write
		\begin{equation*}
			F_p(\mu,C,\eta) = C\cap \bigcap_{z\in C}\{x\in X: f_p(\mu,x,z)\le \eta \},
		\end{equation*}
		and note that each set in the intersection is closed in $X$ as a result of Lemma~\ref{lem:fp-cts}.
		For the second claim, write $I = \min_{x\in C}g_{p,\mu}(x)$.
		Then note that for $x,x'\in F_p(\mu,C,\eta)$, we integrate over \eqref{eqn:distance_comparison_convex} to get
		\begin{align*}
			d^p(x,x') &\le 2^{p-1}\left(\int_{X}d^p(x,y)\, d\mu(y)+\int_{X}d^p(x',y)\, d\mu(y)\right) = 2^{p}(I + \eta) <\infty.
		\end{align*}
		Since the right side is finite and depends only on $\mu$, the claim follows.
	\end{proof}
	
	\begin{remark}\label{rem:Fp-bdd}
		We conjecture, but are unable to prove, that $F_p(\mu,C,\eta)$ is in fact bounded under the weaker moment assumption $\mu\in \mathcal{P}_{p-1}(X)$.
		As we will see later, this is essentially the only obstruction to showing the SLLN holds under the relaxed moment assumption in the stronger senses that we later describe.
	\end{remark}
	
	When $(X,d)$ is compact, Lemma~\ref{lem:gp-cts} and Lemma~\ref{lem:Fp-closed-bdd} imply that $F_p(\mu,C,\eta)$ is non-empty and compact. The next result shows that the same is true even when $(X,d)$ is only assumed to satisfy a weaker condition.
	Recall that a metric space has the {\em Heine-Borel property} if closed, bounded subsets are compact.
	
	\begin{lemma}\label{lem:HB-sep}
		A metric space with the Heine-Borel property is separable.
	\end{lemma}
	
	\begin{proof}
		Let $(X,d)$ be a metric space with the Heine-Borel property, and fix an arbitrary $o\in X$.
		For $n\in\N$, the set $X_n:=\{x\in X: d(x,o) \le n\}$ is a compact metric space.
		Recall that every compact metric space is separable since, for example, one can take as a countable dense set a collection of centers of balls of radii $2^{-k}$ for all $k\in\N$.
		Thus, we have a countable dense subset $D_n\subseteq X_n$ for each $n\in\N$.
		We also have $X= \bigcup_{n\in\N}X_n$, so $D := \bigcup_{n\in\N}D_n$ is a countable dense subset of $X$, hence $X$ is separable.
	\end{proof}
	
	\begin{lemma}\label{lem:Fp-nonempty}
		If $(X,d)$ has the Heine-Borel property, then for all $\mu\in \mathcal{P}_p(X), C\subseteq X$ closed, and $\eta \ge 0$, the relaxed restricted Fr\'echet mean set $F_p(\mu,C,\eta)$ is non-empty and compact.
	\end{lemma}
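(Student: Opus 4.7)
The plan is to argue that once non-emptiness is established, compactness is automatic: by Lemma~\ref{lem:Fp-closed-bdd} we already know $F_p(\mu,C)$ is closed and bounded, and under the Heine-Borel hypothesis closed bounded sets are compact. So the entire task reduces to proving $F_p(\mu,C) \neq \emptyset$ (assuming, of course, that $C$ itself is non-empty; otherwise the statement fails, and this should be noted as an implicit hypothesis).

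To show non-emptiness, I would use the direct method of the calculus of variations. Set $I := \inf\{f_{p,\mu}(x) : x \in C\}$, which is finite since fixing any $x_0 \in C$ gives $I \le f_{p,\mu}(x_0) < \infty$ by the assumption $\mu \in \mathcal{P}_p(X)$. Pick a minimizing sequence $(x_n)_{n \in \N} \subseteq C$ with $f_{p,\mu}(x_n) \to I$. The first key step is to show this sequence is bounded; for this I would reuse the integration trick from the proof of Lemma~\ref{lem:Fp-closed-bdd}: applying \eqref{eqn:distance_comparison} with $x' = x_n$, $x'' = x_0$ and integrating against $\mu$ yields
\[
d^p(x_n,x_0) \le 2^{p-1}\bigl(f_{p,\mu}(x_n) + f_{p,\mu}(x_0)\bigr),
\]
and the right side stays bounded since $f_{p,\mu}(x_n) \to I$.

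Thus $(x_n)$ lies in a closed bounded subset of $X$, which by Heine-Borel is compact. Pass to a convergent subsequence $x_{n_k} \to x^{\ast}$; since $C$ is closed and each $x_{n_k} \in C$, the limit satisfies $x^{\ast} \in C$. Finally, by Lemma~\ref{lem:fp-cts}, the map $x \mapsto f_{p,\mu}(x)$ (obtained by fixing the first argument of the jointly continuous $f_p$) is continuous, so
\[
f_{p,\mu}(x^{\ast}) = \lim_{k \to \infty} f_{p,\mu}(x_{n_k}) = I,
\]
which gives $x^{\ast} \in F_p(\mu,C)$ and concludes non-emptiness. Compactness then follows from Lemma~\ref{lem:Fp-closed-bdd} and Heine-Borel as noted.

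I do not expect a genuine obstacle: the argument is a completely standard ``coercivity plus compactness plus continuity'' extraction, with the only mildly non-trivial input being the boundedness step, and that step is a one-line consequence of \eqref{eqn:distance_comparison} applied exactly as in Lemma~\ref{lem:Fp-closed-bdd}. The only subtlety worth flagging in the write-up is the case $C = \emptyset$, for which the conclusion of non-emptiness cannot hold and which should therefore be excluded.
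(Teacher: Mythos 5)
Your proof is correct and takes essentially the same route as the paper's: a minimizing sequence, boundedness obtained by integrating \eqref{eqn:distance_comparison} against $\mu$, extraction of a convergent subsequence via the Heine--Borel property, and closedness of $C$ to keep the limit in $C$. The only cosmetic differences are that the paper bounds $d^p(x_n,x_m)$ rather than $d^p(x_n,x_0)$ and finishes with Fatou's lemma where you invoke the continuity of $f_{p,\mu}$ from Lemma~\ref{lem:fp-cts} (both are valid); your remark that $C=\emptyset$ must be excluded is a fair point that the paper leaves implicit.
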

	
	\begin{proof}
		By Lemma~\ref{lem:Fp-closed-bdd}, it suffices to show that $F_p(\mu,C,\eta)$ is non-empty.
		Further, it suffices to show that $F_p(\mu,C,0)$ is non-empty, since $F_p(\mu,C,0)\subseteq F_p(C,\mu,\eta)$ for all $\eta \ge 0$.
		Adopt the notation introduced in the proof of  Lemma~\ref{lem:Fp-closed-bdd}.
		Consider a sequence $\{x_n\}_{n\in\N}$ in $C$ with $g_{p,\mu}(x_n)\to I$. Then, by integrating \eqref{eqn:distance_comparison_convex},
		\[
		d^p(x_n,x_m) \le 2^{p-1}\left(\int_{X}d^p(x_n,y)\, d\mu(y) + 2^{p-1}\int_{X}d^p(x_m,y)\, d\mu(y)\right) \to 2^pI
		\]
		as $m,n\to\infty$, hence $\{x_n\}_{n\in\N}$ is bounded. Since $(X,d)$ has the Heine-Borel property, there exists a subsequence $\{n_k\}_{k\in\N}$ and a point $x\in C$ with $x_{n_k}\to x$, so, by Fatou's lemma,
		\[
		\int_{X}d^p(x,y)\, d\mu(y) \le \liminf_{k \to \infty}\int_{X}d^p(x_{n_k},y)\, d\mu(y) = I.
		\]
		This shows $x\in F_p(\mu,C,0)$, hence $F_p(\mu,C,\eta)\neq\emptyset$.
	\end{proof}
	
	Next we describe some important notions of convergence of subsets of a given space.
	The study of convergence of subsets of a topological space is deep, and of particular interest to general topologists.
	The study of convergence of subsets of a metric space, where we will focus our attention, is slighly simpler, and has attracted much interest from those in optimization, economics, and stochastic geometry.
	Nonetheless, we still need some of the general theory, which we will recall from \cite[Chapter~5]{Beer}.
	
	Let $\closed(X)$ denote the collection of all closed subset of $X$, and let $\cpt(X)\subseteq \closed(X)$ denote the collection of all non-empty compact subsets of $X$.
	By Lemmas~\ref{lem:Fp-closed-bdd} and \ref{lem:Fp-nonempty}, we see that, in general, $F_p$ takes values in $\closed(X)$, and, when $(X,d)$ has the Heine-Borel property, $F_p$ takes values in $\cpt(X)$, assuming finite moments of sufficiently high order.
	
	Our primary focus is on a few related notions of ``Kuratowski convergence'' in $\closed(X)$; it turns out that these notions are a natural setting for proving strong laws of large numbers for Fr\'echet means.
	To describe these, let $\{C_n\}_{n\in\N}$ be any sequence in $\closed(X)$, and define the closed sets
	
	\begin{align*}
		\underset{n\to\infty}{\kurouter}C_n &= \{x\in X: \liminf_{n\to\infty}d(x,C_n) = 0 \} \\ 
		&= \left\{x\in X:{\begin{matrix}{\mbox{for all open neighborhoods }}U{\mbox{ of }}x,\\ U\cap C_{n}\neq \emptyset \mbox{ for  infinitely many } n\in\N\end{matrix}}\right\} \\
		&= \bigcap_{n=1}^{\infty}\overline{\bigcup_{m=n}^{\infty}C_m} \\
		\underset{n\to\infty}{\kurinner}C_n &= \{x\in X: \limsup_{n \to \infty}d(x,C_n) = 0 \} \\
		&= \left\{x\in X:{\begin{matrix}{\mbox{for all open neighborhoods }}U{\mbox{ of }}x,\\U\cap C_{n}\neq \emptyset {\mbox{ for large enough }}n \in \N\end{matrix}}\right\},
	\end{align*}
	called the \textit{Kuratowski upper limit} and \textit{Kuratowski lower limit}, respectively. These notions were introduced by Kuratowski in \cite[Section~29]{MR0217751}, although he credits P. Painlev\'e for the original idea, so they are sometimes referred to as \textit{Kuratowski-Painlev\'e} upper and lower limits, respectively.
	In general  $\kurinner_{n\to\infty}C_n \subseteq \kurouter_{n\to\infty}C_n$, and, when these sets agree, we write $\kurlimit_{n\to\infty}C_n$ for their common value and we refer to this as the \textit{Kuratowski limit}.
	The upper and lower limit are sometimes called the outer and inner limit, respectively, and the notations $\kurouter$ and $\kurinner$ are typically meant to evoke ``limit superior'' and ``limit inferior'', respectively.
	
	We can also consider Kuratowski upper and lower limits of more general nets of closed sets.
	That is, for any net of closed sets $\{C_{\alpha}\}_{\alpha\in A}$, define
	\begin{align*}
		\underset{\alpha\in A}{\kurouter}C_{\alpha} &= \left\{x\in X:{\begin{matrix}{\mbox{for all open neighborhoods }}U{\mbox{ of }}x,\\ \forall \alpha\in A, \exists\beta\in A,\beta\ge\alpha: U\cap C_{\beta}\neq \emptyset \end{matrix}}\right\} \\
		\underset{\alpha\in A}{\kurinner}C_{\alpha} &= \left\{x\in X:{\begin{matrix}{\mbox{for all open neighborhoods }}U{\mbox{ of }}x,\\\exists \alpha\in A,\forall \beta\in A \mbox{ if }\beta\ge \alpha \mbox{ then } U\cap C_{\beta}\neq \emptyset \end{matrix}}\right\}.
	\end{align*}
	As before we have $\kurinner_{\alpha\in A}C_{\alpha} \subseteq \kurouter_{\alpha\in A}C_{\alpha}$, and, when these sets agree, we write $\kurlimit_{\alpha\in A}C_{\alpha}$ for their common value called the Kuratowski limit.
	We say that $\{C_{\alpha}\}_{\alpha\in A}$ \textit{converges to $C$ in the upper or lower Kuratowski sense} depending on whether we have $\kurouter_{\alpha\in A}C_{\alpha}\subseteq C$ or $\kurinner_{\alpha\in A}C_{\alpha}\supseteq C$, respectively.
	Further, we say that $\{C_{\alpha}\}_{\alpha\in A}$ \textit{converges to $C$ in the (full) Kuratowski sense} if we have $\kurlimit_{\alpha\in A}C_{\alpha}= C$, or equivalently, if we have convergence in both the upper and lower sense.

	Another focus is on ``Hausdorff convergence'' on $\cpt(X)$ which will provide a more quantitative notion of convergence of closed sets.
	For $C,C'\in \cpt(X)$ write $\rho(C,C') = \max_{x\in C}\min_{x'\in C'}d(x,x')$.
	Then take any net $\{C_{\alpha}\}_{\alpha\in A}$ and set $C$ in $\cpt(X)$, and let us say that $\{C_{\alpha}\}_{\alpha\in A}$ \textit{converges to $C$ in the upper, lower, or full Hausdorff sense} depending on whether we have $\lim_{\alpha\in A}\rho(C_{\alpha},C) = 0, \lim_{\alpha\in A}\rho(C,C_{\alpha}) = 0$, or both, respectively.
	
	For the relationships between Kuratowski and Hausdorff convergence, we have the following result, whose simple proof we omit for the sake of brevity.
	We believe that these relationships are likely well-known, but we could not find a reference for them; in general appears that our ``upper'' and ``lower'' notions of convergence are not as commonly studied in the literature.
	
	\begin{lemma}\label{lem:kuratowski-hausdorff-relations}
		For $(X,d)$ any metric space, convergence in the upper, lower, and full Hausdorff sense implies convergence in the upper, lower, and full Kuratowski sense, respectively.
		For $(X,d)$ any compact metric space, convergence in the upper, lower, and full Kuratowski sense implies convergence in the upper, lower, and full Hausdorff sense, respectively.
	\end{lemma}

	Let us now make some remarks about topologies related to these notions of convergence.
	It is easy to show that there is a topology $\uphausdorfftopo$ on $\cpt(X)$, called the \textit{upper Hausdorff topology}, such that a net $\{C_{\alpha}\}_{\alpha\in A}$ and set $C$ in $\cpt(X)$ have $\lim_{\alpha\in A}C_{\alpha} = C$ in $\uphausdorfftopo$ if and only if $\{C_{\alpha}\}_{\alpha\in A}$ converges to $C$ in the upper Hausdorff sense; likewise, there exists a topology $\downhausdorfftopo$ on $\cpt(X)$ called the \textit{lower Hausdorff topology} such that a net $\{C_{\alpha}\}_{\alpha\in A}$ and set $C$ in $\cpt(X)$ have $\lim_{\alpha\in A}C_{\alpha} = C$ in $\downhausdorfftopo$ if and only if $\{C_{\alpha}\}_{\alpha\in A}$ converges to $C$ in the lower Hausdorff sense.
	Then, the \textit{(full) Hausdorff topology} $\hausdorfftopo$ is defined as the join $\hausdorfftopo = \uphausdorfftopo \vee \downhausdorfftopo$, and it satisfies the property that a net $\{C_{\alpha}\}_{\alpha\in A}$ and set $C$ in $\cpt(X)$ have $\lim_{\alpha\in A}C_{\alpha} = C$ in $\hausdorfftopo$ if and only if $\{C_{\alpha}\}_{\alpha\in A}$ converges to $C$ in the (full) Hausdorff sense.
	Both $\uphausdorfftopo$ and $\downhausdorfftopo$ are ``pseudoquasimetrizable''', and $\hausdorfftopo$ is metrized by the familiar $d_H(C,C') = \max\{\rho(C,C'), \rho(C',C)\}$, called the \textit{Hausdorff metric}.

	Contrarily, convergence in the upper, lower, and full Kuratowski senses need not coincide with convergence in some topologies on $\closed(X)$.
	It is known \cite[Theorem~5.2.6]{Beer} that $(X,d)$ being locally compact is necessary and sufficient for there to be a topology $\tau$ on $\closed(X)$ such that for a net $\{C_{\alpha}\}_{\alpha\in A}$ and set $C$ in $\closed(X)$, we have $\lim_{\alpha\in A}C_{\alpha}= C$ in $\tau$ if and only if $\{C_{\alpha}\}_{\alpha\in A}$ converges to $C$ in the (full) Kuratowski sense.
	In this setting, the topology equivalent to Kuratowski convergence is exactly the \textit{Fell topology} $\Fell$ commonly used by stochastic geometers; we say in this case that Kuratowski convergence is \textit{topologizable}.
	It is not known to us when upper and lower Kuratowski convergence are topologizable.
	Since in our case we are interested in the general setting where $(X,d)$ is not assumed to be locally compact, we will not usually discuss convergence in the upper, lower, and full Kuratowski senses as a topological notion.
	
	We also take a moment to explain the naming conventions for these sense of convergence.
	First, note that each ``upper'' sense of convergence has the property that, if a net $\{C_{\alpha}\}_{\alpha\in A}$ converges to $C$ in this sense, then it also converges to $C'$ in the same sense, for any $C'\supseteq C$.
	Likewise, each ``lower'' sense of convergence has the property that, if a net $\{C_{\alpha}\}_{\alpha\in A}$ converges to $C$ this sense, then it also converges to $C'$ in the same sense for any $C'\subseteq C$.
	From this perspective, we see that both ``upper'' and ``lower'' senses of convergence are ``one-sided'', while ``full'' senses of convergence are ``two-sided''.
	When such notions coincide with convergence in topologies, this means that ``full'' topologies are $T_2$ while neither ``upper'' nor ``lower'' topologies are so.
	(When we say that a topology is $T_2$, we mean that any distinct points are contained in disjoint open neighborhoods; most authors refer to such spaces as \textit{Hausdorff} or \textit{separated}, but we prefer to avoid these terms, since they may cause confusion with other terms in the paper.)

	In the next few results, we develop some ``continuity-like'' properties of various maps that will be used later.
	However, as we just remarked, these may not be bona fide continuity statements.
	
	\begin{lemma}\label{lem:support-converge}
		Suppose $(X,d)$ is separable, and that $\{\mu_{n}\}_{n\in\N}$ is a sequence in $\mathcal{P}(X)$ with $\mu_n\to \mu\in \mathcal{P}(X)$ in $\weak$.
		Then we have $\supp(\mu)\subseteq \kurinner_{n\in\N}\supp(\mu_n)$.
	\end{lemma}
	
	\begin{proof}
		First let us prove that, for any Borel measure $\nu$ on $(X,d)$ and any open set $U\subseteq X$, we hav $\nu(U) = 0$ if and only if $U\cap \supp(\nu) = \emptyset$. The ``only if'' direction does not require separability and follows from the definition of the support. For the ``if'' direction, recall that a separable metric space is second-countable, so there exists some countable collection of open sets $\{U_i\}_{i \in \N}$  which form a base for the topology of $X$. Now suppose that an open set $U$ has $U\cap \supp(\nu) = \emptyset$, that is, that any point $x\in U$ has a neighborhood $V_x$ with $\nu(V_x) = 0$. Write each $V_x$ as a countable union $V_x = \bigcup_{i=1}^{\infty}U_{a^x_i}$ where $a^x:\N\to\N$ is some indexing function. Now $U\subseteq \bigcup_{x\in U}V_x = \bigcup_{x\in U} \bigcup_{i=1}^{\infty}U_{a^x_i}$, but this union is actually countable, so we can reindex it as $U \subseteq \bigcup_{i=1}^{\infty}\tilde{U}_i$, where $\{\tilde U_i\}_{i \in \N}$ is a subcollection of $\{U_i\}_{i \in \N}$. Note that $\tilde{U}_i$ is a subset of $V_x$ for some $x$ depending on $i$, so, by construction,  $\nu(\tilde{U}_i) = 0$ for all $i$. 
		Countable subadditivity gives $\nu(U) \le \sum_{i=1}^{\infty}\nu(\tilde{U}_i) = 0$, as claimed.
		
		Now for the main result, suppose that $\{\mu_n\}_{n\in \N}$ is a sequence in $\mathcal{P}(X)$ with $\mu_n\to \mu$ weakly for some $\mu\in \mathcal{P}(X)$.
		Suppose that $U\subseteq X$ is an open set with $U\cap \supp(\mu) \neq\emptyset$. By the equivalence above, this implies $\mu(U) > 0$.
		Now by weak convergence and the Portmanteau theorem \cite[Theorem~4.25]{Kallenberg}, $\liminf_{n \to \infty}\mu_n(U) \ge \mu(U) > 0$.
		This implies that there is some $N>0$ such that $n\ge N$ have $\mu_n(U) > \frac{1}{2}\mu(U) > 0$.
		But, again by the equivalence above, this implies $U\cap \supp(\mu_n)\neq\emptyset$ for $n\ge N$.
		This proves that $\supp(\mu_n)$ eventually intersects $U$, hence $\supp(\mu)\subseteq \kurinner_{n\in\N}\supp(\mu_n)$.
	\end{proof}

	\begin{proposition}\label{prop:joint-Frechet-cts}
		Suppose $\{(\mu_{\alpha},C_{\alpha},\eta_{\alpha})\}_{\alpha\in A}$ is a net in $\mathcal{P}_{p-1}(X)\times \closed(X)\times [0,\infty)$ with $\mu_{\alpha}\to \mu$ in $\tau_{w}^{p-1}$, and $\eta_{\alpha}\to \eta$ in $\R$, and $\kurlimit_{\alpha\in A}C_{\alpha} = C$ for some $(\mu,C,\eta) \in \mathcal{P}_{p-1}(X)\times \closed(X)\times [0,\infty)$.
		Then we have $\kurouter_{\alpha\in A}F_p(\mu_{\alpha},C_{\alpha},\eta_{\alpha}) \subseteq F_p(\mu,C,\eta)$.
	\end{proposition}
	
	\begin{proof}
		For any $x\in \kurouter_{\alpha\in A}F_p(\mu_{\alpha},C_{\alpha},\eta_{\alpha})$, there is a subnet $\{(\mu_{\beta},C_{\beta},\eta_{\beta})\}_{\beta\in B}$ of $\{(\mu_{\alpha},C_{\alpha},\eta_{\alpha})\}_{\alpha\in A}$ and some $x_{\beta}\in F_p(\mu_{\beta},C_{\beta},\eta_{\beta})$ for each $\beta\in B$ such that $x_{\beta}\to x$.
		Since $x_{\beta}\in C_{\beta}$ for $\beta\in B$, we get $x\in \kurouter_{\alpha\in A}C_{\alpha}= C$.
		
		Next we need to show that for any $z\in C$ we have $f_p(\mu,x,z)\le \eta$.
		To do this, note that $z\in C= \kurinner_{\alpha\in A}C_{\alpha}$ implies that there is some $z_{\beta}\in C_{\beta}$ for each $\beta\in B$ such that $z_{\beta}\to z$.
		Then we have by Lemma~\ref{lem:fp-cts}:
		\begin{align*}
			f_p(\mu,x,z) &= \lim_{\beta\in B}f_p(\mu_{\beta},x,z) \\
			&= \lim_{\beta\in B}(f_p(\mu_{\beta},x,x_{\beta}) + f_p(\mu_{\beta},x_{\beta},z_{\beta}) + f_p(\mu_{\beta},z_{\beta},z)) \\
			&\le \lim_{\beta\in B}(f_p(\mu_{\beta},x,x_{\beta}) + \eta_{\beta} + f_p(\mu_{\beta},z_{\beta},z)) = \eta
		\end{align*}
		as claimed.
	\end{proof}
	
	In the remainder of this section, we restrict attention to $\mathcal{P}_{p}(X)$ rather than the larger $\mathcal{P}_{p-1}(X)$.
	However, as we suggested (cf. Remark~\ref{rem:Fp-bdd}), we conjecture that this stronger moment assumption is not necessary, and that the same results all hold in $\mathcal{P}_{p}(X)$.
	
	\begin{lemma}\label{lem:joint-Frechet-cpt}
		Let $(X,d)$ have the Heine-Borel property.
		Suppose $\{(\mu_{\alpha},C_{\alpha},\eta_{\alpha})\}_{\alpha\in A}$ is a net in $\mathcal{P}_{p}(X)\times \closed(X)\times [0,\infty)$ with $\mu_{\alpha}\to \mu$ in $\tau_{w}^{p}$, and $\eta_{\alpha}\to \eta$ in $\R$, and $\kurlimit_{\alpha\in A}C_{\alpha} = C$ for some $(\mu,C,\eta) \in \mathcal{P}_{p-1}(X)\times \closed(X)\times [0,\infty)$.
		Then for any net $\{x_\alpha\}_{\alpha\in A}$ with $x_\alpha\in F_p(\mu_\alpha,C_\alpha,\eta_{\alpha})$, there exists a subnet $\{(\mu_\beta,C_\beta,\eta_{\beta})\}_{\beta \in B}$ of $\{(\mu_\alpha,C_\alpha,\eta_{\alpha})\}_{\alpha \in A}$ and a point $x\in F_p(\mu,C,\eta)$ such that $x_{\beta}\to x$.
	\end{lemma}
	
	\begin{proof}
		By Lemma~\ref{lem:Fp-nonempty}, there exists some $x'\in F_p(\mu,C,\eta)$.
		Since $C\subseteq \kurinner_{\alpha\in A}C_{\alpha}$, there also exists a net $\{z_\alpha'\}_{\alpha\in A}$ with $z_\alpha'\in C_\alpha$ and $z_\alpha'\to x'$.
		Then use \eqref{eqn:distance_comparison_convex} twice to get
		
		\begin{align*}
			d^p(x_{\alpha},x') &\le 2^{p-1}\int_{X}d^p(x_{\alpha},y)\, d\mu_{\alpha}(y) + 2^{p-1}\int_{X}d^p(x',y)\, d\mu_{\alpha}(y) \\
			&\le 2^{p-1}\int_{X}d^p(z_{\alpha}',y)\, d\mu_{\alpha}(y) + 2^{p-1}\int_{X}d^p(x',y)\, d\mu_{\alpha}(y) \\
			&\le 2^{p-1}\int_{X}2^{p-1}(d^p(z_{\alpha}',x')+d^p(x',y))\, d\mu_{\alpha}(y) + 2^{p-1}\int_{X}d^p(x',y)\, d\mu_{\alpha}(y) \\
			&= 2^{2p-2}d^p(z_{\alpha}',x')+2^{p-1}(2^{p-1}+1)\int_{X}d^p(x',y)\, d\mu_{\alpha}(y).
		\end{align*}
		Taking limits gives
		\[
		\limsup_{\alpha\in A}d^p(x_\alpha,x') \le 2^{p-1}(2^{p-1}+1)\int_{X}d^p(x',y)\, d\mu(y).
		\]
		This shows that the points $\{x_{\alpha}\}_{\alpha\in A}$ eventually lie in a closed ball around $x'$.
		By the Heine-Borel property, there exists a subnet $\{x_{\beta}\}_{\beta\in B}$ of $\{x_{\alpha}\}_{\alpha\in A}$ and a point $x\in X$ such that $x_{\beta}\to x$.
		To finish the proof, we just need to show that $x\in F_p(\mu,C,\eta)$.
		To do this, let $z\in C$ be arbitrary, and get a net $\{z_\beta\}_{\beta\in B}$ with $z_{\beta}\in C_{\beta}$ for all $\beta\in B$ and $z_\beta\to z$.
		Now use Lemma~\ref{lem:fp-cts} to get:
		\begin{equation*}
			f_p(\mu,x,z) = \lim_{\beta \in B}f_p(\mu_{\beta},x_{\beta},z_{\beta}) \le \lim_{\beta \in B}\eta_{\beta} =\eta,
		\end{equation*}
		as needed.
	\end{proof}
	
	\begin{proposition}\label{prop:joint-Frechet-Hausdorff-cts}
		Let $(X,d)$ have the Heine-Borel property.
		Suppose that $\{(\mu_{\alpha},C_{\alpha},\eta_{\alpha})\}_{\alpha\in A}$ is a net in $\mathcal{P}_{p}(X)\times \closed(X)\times [0,\infty)$ with $\mu_{\alpha}\to \mu$ in $\tau_{w}^{p}$, and $\eta_{\alpha}\to \eta$ in $\R$, and $\kurlimit_{\alpha\in A}C_{\alpha} = C$ for some $(\mu,C,\eta) \in \mathcal{P}_{p}(X)\times \closed(X)\times [0,\infty)$.
		Then we have $\lim_{\alpha\in A}\rho(F_p(\mu_{\alpha},C_{\alpha},\eta_{\alpha}),F_p(\mu,C,\eta)) = 0$.
	\end{proposition}
	
	\begin{proof}
		For any subnet $\{(\mu_\beta,C_\beta,\eta_{\beta})\}_{\beta \in B}$ of $\{(\mu_\alpha,C_\alpha,\eta_{\alpha})\}_{\alpha \in A}$, get a net $\{x_\beta\}_{\beta\in B}$ with $x_\beta\in F_p(\mu_{\beta},C_{\beta},\eta_{\beta})$ for all $\beta\in B$.
		By Lemma~\ref{lem:joint-Frechet-cpt}, there exists a further subnet $\{(\mu_\gamma,C_\gamma,\eta_{\gamma})\}_{\gamma \in \Gamma}$ of $\{(\mu_\beta,C_\beta,\eta_{\beta})\}_{\beta \in B}$ and a point $x\in F_p(\mu,C,\eta)$ such that $x_{\gamma}\to x$, and this implies
		\begin{equation*}
			\lim_{\gamma \in \Gamma}\rho(F_p(\mu_{\gamma},C_{\gamma},\eta_{\gamma}),F_p(\mu,C,\eta))= 0.
		\end{equation*}
		Since this holds for any subnet, we have established  $\lim_{\alpha\in A}\rho(F_p(\mu_{\alpha},C_{\alpha},\eta_{\alpha}),F_p(\mu,C,\eta))= 0$, and this is exactly the desired conclusion.
	\end{proof}
	
	\begin{remark}
		For a net $\{C_\alpha\}_{\alpha}$ in $\closed(X)$ with $\kurouter_{\alpha\in A}C_\alpha\subseteq C$, the assumption that every $C_\alpha$ is non-empty and compact is, in general, not sufficient to guarantee that $\lim_{\alpha\in A}\rho(C_{\alpha},C) =0$.
		To see this, consider $X=\R$ with the usual metric, the sequence $C_n = \{n\}$, and $C = \{0\}$. Thus, Proposition~\ref{prop:joint-Frechet-Hausdorff-cts} is truly stronger than Proposition~\ref{prop:joint-Frechet-cts}.
	\end{remark}
	
	\begin{remark} 
		We are considering $1 \le p < \infty$, but there is a natural extension of this theory to the case of $p=\infty$.
		Define $\mathcal{P}_{\infty}(X)$ to be the set of all Borel probability measures on $X$ with bounded support, and define, for $\mu\in \mathcal{P}_{\infty}(X)$ and $x\in X$,
		\[
		g_{\infty}(\mu,x) = ||d(x,\cdot)||_{L^{\infty}(X,\mu)}= \esssup\{d(x,\cdot), \mu\}.
		\]
		Here, for a measurable space $(S,\mathcal{S},\nu)$ and a measurable function $h:S\to \R$, we write $\esssup\{h,\nu\}$ for the infimum over all $c\in\R$ such that $\nu(\{y\in X: h(y) > c \}) = 0$.
		Then, $F_{\infty}$, which is sometimes called the \textit{circumcenter}, can be easily to shown to share most of the properties with $F_p$ for $1 \le p < \infty$; the proofs are similar with only minor modifications.
		Therefore, most of the main theorems of Section~\ref{sec:results} also hold for $p=\infty$.
		However, for the sake of brevity, we will not treat the $p=\infty$ case in the remainder of this paper and return to our assumption that $1 \le p < \infty$.
	\end{remark}
	
	It is likely that the conclusion of Proposition~\ref{prop:joint-Frechet-Hausdorff-cts} holds under the following slightly weaker hypotheses:
	Instead of requiring that the space $(X,d)$ has the Heine-Borel property, one can require that the space
	\begin{equation*}
		\overline{\bigcup_{\substack{\alpha\in A \\ \alpha \ge \beta}}F_p(\mu_{\alpha},C_{\alpha},\eta_{\alpha})}
	\end{equation*}
	has the Heine-Borel property for some $\beta\in A$, in the metric inherited from $(X,d)$, 
	Indeed, such is the hypothesis of similar results in \cite[Theorem~A.4]{Huckemann} and \cite[Theorem~3.5]{Schoetz2}.
	However, it appears difficult to verify this property without the a priori assumption $(X,d)$ itself has the Heine-Borel property; as remarked in \cite{Schoetz2}, this ``may be of similar difficulty as showing convergence of Fr\'{e}chet means directly''.
	Thus, we focus on the case that $(X,d)$ has the Heine-Borel property.
	
	To close this section, we detail the connection between Fr\'echet $p$-means and optimal transport.
	Let $W_p$ denote the $p$-Wasserstein metric on $\mathcal{P}_p(X)$; that is
	\begin{equation*}
		W_p(\mu,\nu) = \inf_{\pi\in \Gamma(\mu,\nu)}\left(\int_{X\times X}d^p(x,y)\, d\pi(x,y)\right)^{1/p}
	\end{equation*}
	where $\Gamma(\mu,\nu)$ denotes the collection of all couplings of $\mu$ and $\nu$ on $X\times X$.
	Now define, for each $x\in X$, the sets $K_{x} = \{\mu\in \mathcal{P}_p(X): x\in F_p(\mu)\} = \{\mu\in \mathcal{P}_p(X): g_{p}(\mu,x) \le g_{p}(\mu,x') \text{ for all } x'\in X \}$.
	It is clear that $g_p(\mu,x) = W_p^p(\mu,\delta_x)$, so it follows by the monotonicity of $a\mapsto a^p$ that the set $F_p(\mu)$ is just the set of points $x\in X$ which minimize their $W_p$-distance to the measure $\mu$.
	In other words, the collection $\mathbf{K}_p = \{K_{x}\}_{x\in X}$ is a \textit{Voronoi diagram} in $(\mathcal{P}_p(X),W_p)$ with landmark points $\{\delta_x\}_{x\in X}$.
	By Lemma~\ref{lem:Fp-nonempty} it follows that, if $(X,d)$ has the Heine-Borel property, then $\mathbf{K}_p$ is covering of $\mathcal{P}_p(X)$.
	In fact, these sets are rather nice:
	
	\begin{lemma}
		Fix $1 \le p < \infty$ and write $\mathbf{K}_p = \{K_x\}_{x\in X}$.
		Then, for each $x\in X$, the set $K_{x}$ is a closed, convex subset of $\mathcal{P}_p(X)$.
	\end{lemma}
	
	\begin{proof}
		To see that $K_{x}$ is closed, suppose that $\{\mu_n\}_{n\in\N}$ are in $K_{x}$ and that $\mu_n\to \mu$ holds in $\weakp$.
		Then for any $x'\in X$ we have $g_{p,x}(\mu) = \lim_{n\to\infty}g_{p,x}(\mu_n) \le \lim_{n\to\infty}g_{p,x'}(\mu_n) = g_{p,x'}(\mu)$ by Lemma~\ref{lem:gp-cts}, hence $\mu\in K_{x}$.
		To see that $K_{x}$ is convex, use the linearity of the function $g_{p,x}$ to get that if $\mu_1,\mu_2$ are in $K_{x}$ and $\alpha\in (0,1)$, then, for any $x'\in X$,
		\begin{align*}
			g_{p,x}(\alpha\mu_1+(1-\alpha)\mu_2) &=  \alpha g_{p,x}(\mu_1) + (1-\alpha) g_{p,x}(\mu_2) \\
			&\le  \alpha g_{p,x'}(\mu_1) + (1-\alpha) g_{p,x'}(\mu_2) \\
			&= g_{p,x'}(\alpha\mu_1+(1-\alpha)\mu_2).
		\end{align*}
		This finishes the proof.
	\end{proof}
	
	It also interesting to define the sets $K_{x}^{\ast} = \{\mu\in\mathcal{P}_p(X) : x\in F_p^{\ast}(\mu)\} = \{\mu\in\mathcal{P}_p(X): x\in\supp(\mu)\text{ and } g_p(\mu,x) \le g_p(\mu,x')\text{ for all } x'\in \supp(\mu)\}$ in order to try to get an analogous geometric picture for the support-restricted Fr\'echet $p$-means.
	Again by Lemma~\ref{lem:Fp-nonempty}, the collection $\mathbf{K}_p^{\ast} = \{K_{x}^{\ast}\}_{x\in X}$ covers $\mathcal{P}_p(X)$ whenever $(X,d)$ has the Heine-Borel property.
	However, there is no longer a nice interpetation of these sets as a Voronoi diagram.
	Even worse is that the sets in $\mathbf{K}_p^{\ast}$ can fail to be closed and convex.
	To see this, consider $X =\{-1,0,1\}$ with the metric inherited from the real line, and let $p>1$ be arbitrary.
	To see that $K_{-1}^{\ast}$ is not convex, define $\mu_1 = \frac{1}{2}\delta_{-1}+\frac{1}{2}\delta_{0}$ and $\mu_2 = \frac{1}{2}\delta_{-1}+\frac{1}{2}\delta_{1}$.
	Then, $\mu_1,\mu_2\in K_{-1}^\ast$ but $\frac{1}{2}\mu_1+\frac{1}{2}\mu_2 = \frac{1}{2}\delta_{-1}+\frac{1}{4}\delta_{0} + \frac{1}{4}\delta_{1}\notin K_{-1}^\ast$.
	To see that $K_{0}^{\ast}$ is not closed, define $\mu_n = \frac{1}{2}(1-\frac{1}{n})\delta_{-1} + \frac{1}{2}(1-\frac{1}{n})\delta_{1} + \frac{1}{n}\delta_{0}$ for $n\in\N$.
	Then note that $\mu_n\in K_{0}^{\ast}$ for all $n\in\N$, but $\mu_n$ converges weakly to $\frac{1}{2}\delta_{-1}+\frac{1}{2}\delta_{1}\notin K_{0}^{\ast}$.
	
	These observations show that the support-restricted Fr\'echet $p$-means are significantly less well-behaved than the unrestricted Fr\'echet $p$-means.
	This phenomenon can be traced primarily back to the observation that the supports of two measures can differ greatly while their $p$-Wasserstein distance remains small.
	It is somewhat surprising, then, that, despite this difficulty, our main results hold for both Fr\'echet $p$-means and support-restricted Fr\'echet $p$-means alike.
	
	A cartoon to visualize $\mathbf{K}_p$ and $\mathbf{K}_p^{\ast}$ is given in Figure~\ref{fig:voronoi}.
	The value of this geometric picture is that it leads to a sort of ``dual'' version of the problem:
	Whereas previously $\mu\in\mathcal{P}_p(X)$ was fixed and we considered the set of $x\in X$ which minimized the function $g_{p,\mu}$, we can instead fix $x\in X$ and consider the set of $\mu\in\mathcal{P}_p(X)$ such that $x$ is a minimizer of $g_{p,\mu}$.
	The results above show that, in the case of Fr\'echet $p$-means on a compact metric space, the determination of $x\in F_p(\mu)$ is equivalent to checking whether a given measure lies in a certain compact convex set.
	However, this cartoon can also be somewhat misleading for general $(X,d)$: The regions of $\mathbf{K}_p$ and $\mathbf{K}_p^{\ast}$ need not be polytopes and they may have empty interior.
	
	\begin{figure}
		\begin{tikzpicture}
			\definecolor{paleRed}{HTML}{ffadaf}
			\definecolor{paleGreen}{HTML}{b1e6b3}
			\definecolor{paleBlue}{HTML}{8cc8ff}
			\definecolor{Green}{HTML}{27912b}
			
			\fill[fill=paleRed] (0.8,2.07846) -- (0,3.4641) -- (-1,1.7320) -- cycle;
			\fill[fill=paleGreen] (1.2,1.38564) -- (2,0) -- (0,0) -- cycle;
			\fill[fill=paleBlue] (-2,0) -- (0,0) -- (1.2,1.38564) -- (0.8,2.07846) -- (-1,1.73205) -- cycle;
			
			\draw[ultra thick,color=blue] (-1,1.73205) -- (-2,0) -- (0,0);	
			\draw[ultra thick,color=red] (-1,1.7320) -- (0,3.4641) -- (0.8,2.07846);
			\draw[ultra thick,color=blue] (0.8,2.07846) -- (1.2,1.38564);
			\draw[ultra thick,color=Green] (1.2,1.38564) -- (2,0) -- (0,0);
			
			\node at (-1.8,2.3) {$\mathcal{P}_p(X)$};
			
			\node at (-2.3,0) {$\delta_{x_1}$};
			\node at (0.4,3.4641) {$\delta_{x_2}$};
			\node at (2.4,0) {$\delta_{x_3}$};
			
			\node[red] at (0,2.4) {$K_{{x_2}}$};
			\node[Green] at (1.1,0.4) {$K_{{x_3}}$};
			\node[blue] at (-0.3,0.9) {$K_{{x_1}}$};
		\end{tikzpicture} \begin{tikzpicture}
			\definecolor{paleRed}{HTML}{ffadaf}
			\definecolor{paleGreen}{HTML}{b1e6b3}
			\definecolor{paleBlue}{HTML}{8cc8ff}
			\definecolor{Green}{HTML}{27912b}
			
			\fill[fill=paleRed] (0.8,2.07846) -- (0,3.4641) -- (-1,1.7320) -- cycle;
			\fill[fill=paleGreen] (1.2,1.38564) -- (2,0) -- (0,0) -- cycle;
			\fill[fill=paleBlue] (-2,0) -- (0,0) -- (1.2,1.38564) -- (0.8,2.07846) -- (-1,1.73205) -- cycle;
			
			\draw[ultra thick,color=blue] (-1,1.73205) -- (-2,0) -- (0,0);	
			\draw[ultra thick,color=red] (-1,1.7320) -- (0,3.4641) -- (1,1.73205);
			
			\draw[ultra thick,color=Green] (1,1.73205) -- (2,0) -- (0,0);
			
			\node at (-1.8,2.3) {$\mathcal{P}_p(X)$};
			
			\node at (-2.3,0) {$\delta_{x_1}$};
			\node at (0.4,3.4641) {$\delta_{x_2}$};
			\node at (2.4,0) {$\delta_{x_3}$};
			
			\node[red] at (0,2.4) {$K_{{x_2}}^{\ast}$};
			\node[Green] at (1.1,0.4) {$K_{{x_3}}^{\ast}$};
			\node[blue] at (-0.3,0.9) {$K_{{x_1}}^{\ast}$};
		\end{tikzpicture}
		\caption{The space of measures $\mathcal{P}_p(X)$ and its covering by $\mathbf{K}_p$ (left) and $\mathbf{K}_p^{\ast}$ (right), where $X=\{x_1,x_2,x_3\}$.}
		\label{fig:voronoi}
	\end{figure}
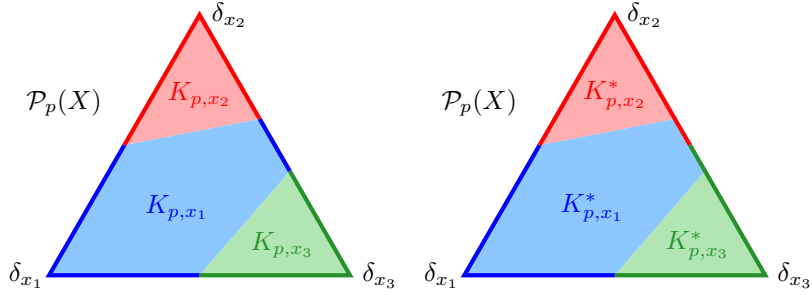
	
	\section{Measurability Concerns}\label{sec:measurability}
	
	For some metric space $(X,d)$ and some $\mu\in \mathcal{P}_{p-1}(X)$, let $(\Omega,\F,\P)$ denote a probability space on which independent, identically distributed (i.i.d.) random elements $Y_1,Y_2,\ldots $ with common distribution $\mu$ are defined. 
	As above, $\bar \mu_n$, $n \in \N$, is the empirical distribution of $Y_1, \ldots, Y_n$. 
	
	An interesting technical point is that it is not immediately clear whether sets like $\{\kurouter_{n \to \infty}F_p(\bar \mu_n) \subseteq F_p(\mu)\}$, or other sets describing other senses of convergence of related objects, are in $\F$.
	In all of our main results, these will turn out to be supersets of events in $\F$ that have probability one, and hence they are at least measurable with respect to the $\P$-completion of $\F$. In this section we address the natural question of whether they are actually measurable with respect to the possibly incomplete $\sigma$-algebra $\F$. More generally, we clarify the measurability of certain objects of interest with respect to some natural measurability structures on $\closed(X)$.
	
	It is standard to endow $\closed(X)$ with the $\sigma$-algebra generated by the sets $\{\{C \in \closed(X) : C \cap U \ne \emptyset\}: U\subseteq X \mbox{ open}\}$, which is called the \textit{Effros $\sigma$-algebra} and is denoted $\effros(X)$ \cite[Section~3.3]{Srivastava}.
	By a \textit{random set} we mean a map $\Omega\to \closed(X)$ that is $\F/\effros(X)$-measurable.
	For a topology $\tau$ on any set, let us also write $\mathcal{B}(\tau)$ for the Borel $\sigma$-algebra on this set generated by $\tau$.
	
	In this section only, we find it most useful to study the unrestricted Fr\'echet $p$-means and the support-restricted Fr\'echet $p$-means separately.
	We also specialize to $\eta = 0$, although this is only for the sake of simplicity and the generalization to $\eta > 0$ is straightforward.
	
	\begin{lemma}\label{lem:F-mble}
		Suppose $1 \le p < \infty$ and that $(X,d)$ is separable and locally compact.
		Then, the map $F_p:\mathcal{P}_p(X)\to \closed(X)$ is $\mathcal{B}(\weakp)/\effros(X)$-measurable.
	\end{lemma}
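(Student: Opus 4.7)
The plan is to leverage Proposition~\ref{prop:joint-Frechet-cts} applied with the second argument fixed at $C = X$, which tells us that $\mu \mapsto F_p(\mu) = F_p(\mu, X)$ is continuous from $(\mathcal{P}_p(X), \weakp)$ into $(\closed(X), \upkuratowski)$. Since $\effros(X)$ is, by definition, generated by the ``hit'' sets $\{C \in \closed(X) : C \cap U \neq \emptyset\}$ as $U$ varies over the open subsets of $X$, it suffices to show that for each open $U \subseteq X$ the preimage $\{\mu \in \mathcal{P}_p(X) : F_p(\mu) \cap U \neq \emptyset\}$ is Borel in $\weakp$.

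The next step is a reduction to compact hitting. Because $(X,d)$ is separable (hence second countable) and locally compact, it admits a countable base of relatively compact open sets, and from this one checks that every open $U \subseteq X$ can be written as $U = \bigcup_{n \in \N} L_n$ for some sequence of compact sets $\{L_n\}_{n \in \N}$. Consequently,
\[
\{\mu \in \mathcal{P}_p(X) : F_p(\mu) \cap U \neq \emptyset\} = \bigcup_{n \in \N} \{\mu \in \mathcal{P}_p(X) : F_p(\mu) \cap L_n \neq \emptyset\},
\]
so it is enough to prove that, for each compact $L \subseteq X$, the set $A_L := \{\mu \in \mathcal{P}_p(X) : F_p(\mu) \cap L \neq \emptyset\}$ is closed in $\weakp$ (and hence Borel).

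For the closedness of $A_L$, I would take a sequence $\mu_n \to \mu$ in $\weakp$ with $\mu_n \in A_L$ for each $n$ (sequences suffice because $\weakp$ is metrizable) and pick a witness $x_n \in F_p(\mu_n) \cap L$. By compactness of $L$, a subsequence $\{x_{n_k}\}_{k \in \N}$ converges to some $x \in L$, and by the very definition of the Kuratowski upper limit this yields $x \in \kurouter_{k \to \infty} F_p(\mu_{n_k})$. On the other hand, Proposition~\ref{prop:joint-Frechet-cts} gives $F_p(\mu_{n_k}) \to F_p(\mu)$ in $\upkuratowski$, which means precisely that $\kurouter_{k \to \infty} F_p(\mu_{n_k}) \subseteq F_p(\mu)$. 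Therefore $x \in F_p(\mu) \cap L$, so $\mu \in A_L$ as needed.

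The main subtlety is that $\upkuratowski$ is not $T_2$, so one cannot simply quote continuity to conclude that the preimages of closed sets are closed; instead, one has to combine the upper-Kuratowski continuity of $F_p$ with compactness of $L$ and local compactness of $X$ (which is exactly what allows every open $U$ to be exhausted by compacts, and which fails for general separable metric $X$ such as infinite-dimensional Hilbert spaces) to extract a pointwise limit argument that actually lands back in $F_p(\mu)$. This is the reason local compactness appears as a hypothesis in the statement.
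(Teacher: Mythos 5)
Your proof is correct, but it takes a genuinely different route from the paper's. The paper factors $F_p$ as the composition of the continuous map $\mu\mapsto f_{p,\mu}$ from $(\mathcal{P}_p(X),\weakp)$ into $(C(X;\R),\pointwise)$ with the arg-min map $M(f)=\arg\min_{x\in X}f(x)$, and then proves directly that $M$ is $\mathcal{B}(\pointwise)/\effros(X)$-measurable by expressing $M^{-1}(\{C: C\cap B_r(x_n)\neq\emptyset\})$ as a countable union/intersection of sets defined by inequalities between point evaluations, using a countable dense set and local compactness to guarantee that infima over closed balls are attained and computable over countable subsets. You instead invoke the already-established upper-Kuratowski continuity of $F_p$ (Proposition~\ref{prop:joint-Frechet-cts} with $C=X$), exhaust each open $U$ by compacta $L_n$ (which is exactly where local compactness and second countability enter for you), and show that $\{\mu: F_p(\mu)\cap L\neq\emptyset\}$ is closed in the metrizable topology $\weakp$ by extracting a convergent subsequence of witnesses in $L$ and noting that its limit lies in $\kurouter_{k\to\infty}F_p(\mu_{n_k})\subseteq F_p(\mu)$. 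Each step checks out: the constant net $C_\alpha=X$ does converge in $\downkuratowski$, an open subset of a locally compact second countable metric space is indeed $\sigma$-compact, and the emptiness issue never arises since $F_p(\mu_n)\cap L\neq\emptyset$ supplies the witnesses. Your argument is shorter given that Proposition~\ref{prop:joint-Frechet-cts} is in hand, and it yields the slightly stronger intermediate fact that compact hit-sets pull back to closed (not merely Borel) sets; the paper's argument is more self-contained in that it establishes measurability of the general arg-min map on $C(X;\R)$, a reusable fact independent of the Fr\'echet-mean structure. You are also right to flag that the failure of $\upkuratowski$ to be $T_2$ is why continuity alone does not finish the job and why the compactness extraction is needed.
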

	
	\begin{proof}
		We prove the statement in two steps.
		First, let $C(X;\R)$ denote the space of all continuous, real-valued functions on $X$, and let $\pointwise$ denote the topology of pointwise convergence on $C(X;\R)$.
		Let $o\in X$ be arbitrary, and consider the map $f:\mathcal{P}_p(X)\to C(X;\R)$ defined by $f(\mu) := f_{p}(\mu,\cdot , o)$; by Lemma~\ref{lem:fp-cts} we know that $f(\mu):X\to\R$ is indeed a continuous function. It follows from the definitions that $f:(\mathcal{P}_p(X),\weakp)\to (C(X;\R),\pointwise)$ is continuous, hence measurable.
		
		Next we show that the map $M:C(X;\R)\to \closed(X)$ defined by 
		$M(f) := \arg\min_{x\in X}f(x)$ is $\mathcal{B}(\pointwise)/\effros(X)$-measurable. Let $D = \{x_1,x_2,\ldots\} \subseteq X$ be a countable dense set, and use local compactness to get, for each $n \in \N$, some radius $r_n > 0$ such that the closure of the ball $B_{r_n}(x_n)$ is compact. Now the
		collection $\{B_{r}(x_n): r \in (0,r_n)\cap \Q, n \in \N\}$ is a basis for the metric topology of $(X,d)$, so the sets $\{\{C \in \closed(X) : C  \cap  B_r(x_n) \ne \emptyset\},  r \in (0,r_n)\cap \Q , n \in \N\}$ generate the $\sigma$-algebra $\effros(X)$. 
		It thus suffices to check that $M^{-1}(\{C \in \closed(X) : C  \cap  B_r(x_n) \ne \emptyset\})\in \mathcal{B}(\pointwise)$ holds for all $r \in (0,r_n)\cap \Q$, $n \in \N$. Observe that for each $n \in \N$ and $r \in (0,r_n)\cap \Q$
		\begin{align*}
			&M^{-1}(\{C \in \closed(X) : C  \cap  B_r(x_n) \ne \emptyset\}) \\
			&\qquad = \left\{f \in C(X) : \arg\min_{x\in X} f(x) \cap  B_r(x_n) \ne \emptyset\right\} \\
			&\qquad=
			\bigcup_{s\in (0,r)\cap\Q}\left\{f \in C(X) : \inf_{x \in \overline{B}_s(x_n)}f(x) = \inf_{x \in X}f(x)  \right\} \\
			&\qquad=
			\bigcup_{s\in (0,r)\cap\Q}\left\{f \in C(X) : \inf_{x \in \overline{B}_s(x_n)\cap D}f(x) = \inf_{x \in D}f(x)  \right\},
		\end{align*}
		where we used the fact that a continuous function $f$ 
		attains its minimum on the compact ball $\overline{B}_s(x_n)$. 
		Now note for $x_0 \in X$ that the point evaluation $\delta_{x_0}:(C(X),\mathcal{B}(\pointwise))\to (\R,\mathcal{B}(\R))$ 
		defined via $\delta_{x_0}(f) := f(x_0)$ is continuous and hence measurable, so
		\begin{align*}
			&\left\{f \in C(X) : \inf_{x \in \overline{B}_s(x_n)\cap D}f(x) = \inf_{x \in D}f(x)  \right\} \\
			&\qquad= \bigcap_{x \in D}\bigcup_{x' \in \overline{B}_s(x_n)\cap D}\left\{f \in C(X) : f(x') \le f(x) \right\}
		\end{align*}
		is in $\mathcal{B}(\pointwise)$. This proves the claim.
	\end{proof}
	
	\begin{lemma}\label{lem:F-restr-mble}
		For $1\le p < \infty$, the map $F_p^{\ast}(\bar \mu_n):\Omega\to \closed(X)$ is $\F/\effros(X)$-measurable.
	\end{lemma}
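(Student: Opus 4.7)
The plan is to exploit the fact that $F_p^{\ast}(\bar\mu_n)$ is, by construction, a random subset of the finite random set $\{Y_1,\ldots,Y_n\}$, so no delicate topological argument is needed -- everything reduces to the measurability of finitely many real-valued functions of $(Y_1,\ldots,Y_n)$.

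First, I would introduce the random index set $J_n(\omega) := \arg\min_{j \in \{1,\ldots,n\}} \sum_{i=1}^n d^p(Y_j(\omega),Y_i(\omega))$, so that by the definition of the empirical restricted Fr\'echet $p$-mean,
\[
F_p^{\ast}(\bar\mu_n)(\omega) = \{Y_j(\omega) : j \in J_n(\omega)\}.
\]
For each pair $(j,k) \in \{1,\ldots,n\}^2$, the map $\omega \mapsto \sum_{i=1}^n d^p(Y_j(\omega),Y_i(\omega)) - \sum_{i=1}^n d^p(Y_k(\omega),Y_i(\omega))$ is a measurable real-valued function, so the event $A_{j,k} := \{\omega : \sum_i d^p(Y_j,Y_i) \le \sum_i d^p(Y_k,Y_i)\}$ lies in $\F$, and hence $\{j \in J_n\} = \bigcap_{k=1}^n A_{j,k} \in \F$.

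Next, I would verify measurability via the generating collection of $\effros(X)$. It suffices to show for every open $U \subseteq X$ that $\{\omega : F_p^{\ast}(\bar\mu_n)(\omega) \cap U \ne \emptyset\} \in \F$. Using the description above,
\[
\{F_p^{\ast}(\bar\mu_n) \cap U \ne \emptyset\} = \bigcup_{j=1}^n \bigl(\{j \in J_n\} \cap \{Y_j \in U\}\bigr),
\]
which is a finite union of events in $\F$, since each $Y_j$ is an $X$-valued random element and $U$ is Borel.

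The main (and only) thing to articulate carefully is why the sets $\{C \in \closed(X) : C \cap U \ne \emptyset\}$, $U \subseteq X$ open, generate $\effros(X)$ -- but this was already recorded in the discussion preceding Lemma~\ref{lem:F-mble}, so it can simply be quoted. No separability, local compactness, or moment hypothesis on $\mu$ is required for this argument; the statement holds in complete generality. There is no genuine obstacle here, in contrast to Lemma~\ref{lem:F-mble}, because the restriction to $\supp(\bar\mu_n) = \{Y_1,\ldots,Y_n\}$ transforms what was an optimization over an uncountable set into an optimization over $n$ explicit random points.
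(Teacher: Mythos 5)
Your proof is correct and follows essentially the same route as the paper's: both exploit that $\supp(\bar\mu_n)=\{Y_1,\ldots,Y_n\}$ is finite, so that the generating event for $\effros(X)$ reduces to a finite Boolean combination of the events $\{Y_j\in U\}$ and comparisons of the measurable sums $\sum_{i=1}^n d^p(Y_j,Y_i)$. The only cosmetic difference is that you express $\{F_p^{\ast}(\bar\mu_n)\cap U\neq\emptyset\}$ directly via the index set $J_n$, whereas the paper writes out the complementary event $\{F_p^{\ast}(\bar\mu_n)\cap U=\emptyset\}$.
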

	
	\begin{proof}
		For any open set $U\subseteq X$, we have
		\begin{align*}
			&\{F_p^{\ast}\cap U = \emptyset\} \\
			&\qquad= \{\forall x\in \supp(\bar \mu_n)\cap U, \ \exists  x'\in \supp(\bar \mu_n) \text{ with } f_{p}(\bar \mu_n,x,x') > 0 \} \\
			&\qquad= \bigcap_{j=1}^{n}\left(\{Y_j\notin  U\} \cup \bigcup_{k=1}^{n} \{f_{p}(\bar \mu_n,Y_j,Y_k) > 0\}\right) \\
			&\qquad= \bigcap_{i=j}^{n}\left(\{Y_j\notin  U\} \cup \bigcup_{k=1}^{n} \left\{\frac{1}{n}\sum_{i=1}^{n}(d^p(Y_j,Y_i)  - d^p(Y_k,Y_i)) > 0\right\}\right).
		\end{align*}
		Since $\{Y_i\in U\}$ and $\{\frac{1}{n}\sum_{i=1}^{n}(d^p(Y_j,Y_i)  - d^p(Y_k,Y_i)) > 0\}$ are both in $\F$, and  since the union and intersection appearing above are finite, it  follows that $\{F_p^{\ast}(\bar \mu_n)\cap U = \emptyset\}\in\F$, so the result is proved.
	\end{proof}
	
	It is easily shown that $\bar \mu_n:\Omega\to \mathcal{P}_p(X)$ is $\F/\mathcal{B}(\weakp)$-measurable, so Lemma~\ref{lem:F-mble} implies that $F_p(\bar \mu_n):\Omega\to \closed(X)$ is $\F/\effros(X)$-measurable whenever $(X,d)$ is separable and locally compact. Compare this to Lemma~\ref{lem:F-restr-mble} in which we showed directly that  $F_p^{\ast}(\bar \mu_n):\Omega\to \closed(X)$ is $\F/\effros(X)$-measurable under no assumptions on the metric space $(X,d)$. It is not known to us what conditions are sufficient to ensure that the map $F_p^{\ast}:\mathcal{P}_{p}(X)\to \closed(X)$ is in fact $\mathcal{B}(\weakp)/\effros(X)$-measurable.
	
	\begin{lemma}\label{lem:kurouter-mble}
		Suppose that $(X,d)$ is separable and locally compact.  Assume that $C_n:\Omega\to \closed(X)$, $n \in \N$, are $\F/\effros(X)$-measurable. Then, $\kurouter_{n\to\infty}C_n:\Omega\to \closed(X)$ is $\F/\effros(X)$-measurable.
	\end{lemma}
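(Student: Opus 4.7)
The plan is to reduce the question to measurability of events of the form $\{C_j \cap V \neq \emptyset\}$ for open $V$, which lie in $\F$ by the definition of $\effros(X)$ and the assumption that the $C_j$ are random closed sets. Using separability and local compactness as in the proof of Lemma~\ref{lem:F-mble}, I would fix a countable basis $\mathcal{U} = \{B_r(x_k) : k \in \N,\ r \in (0,r_k) \cap \Q\}$ for the topology of $X$ consisting of balls with compact closure. Since $\effros(X)$ is generated by the sets $\{C \in \closed(X) : C \cap U \neq \emptyset\}$ as $U$ ranges over any base for the topology, it suffices to show that $\{\kurouter_{n\to\infty} C_n \cap B \neq \emptyset\} \in \F$ for every $B \in \mathcal{U}$.

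Next I would fix $B = B_r(x_k)$ and reduce to a compact set. Since $\kurouter_{n\to\infty} C_n$ is closed, a point in $\kurouter_{n\to\infty} C_n \cap B$ lies in $\overline{B_s(x_k)}$ for some rational $s < r$, and conversely, so
\[
\{\kurouter_{n\to\infty} C_n \cap B_r(x_k) \neq \emptyset\} = \bigcup_{s \in (0,r) \cap \Q} \{\kurouter_{n\to\infty} C_n \cap \overline{B_s(x_k)} \neq \emptyset\}.
\]
It is thus enough to treat $\{\kurouter_{n\to\infty} C_n \cap K \neq \emptyset\}$ for the compact set $K = \overline{B_s(x_k)}$. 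Using the explicit representation $\kurouter_{n \to \infty} C_n = \bigcap_m \overline{\bigcup_{j \geq m} C_j}$ from Section~\ref{sec:topology}, the intersection with $K$ is a nested decreasing intersection of compact sets, so by the finite intersection property it is nonempty if and only if each factor $\overline{\bigcup_{j \geq m} C_j} \cap K$ is nonempty.

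To finish I would invoke the elementary fact that for any set $A$ and any compact set $K$ in a metric space, $\overline{A} \cap K \neq \emptyset$ if and only if $A \cap K_{1/N} \neq \emptyset$ for every $N \in \N$, where $K_{1/N} := \{y \in X : d(y,K) < 1/N\}$ is the open $1/N$-neighborhood of $K$; the reverse direction uses compactness of $K$ to extract a convergent subsequence. Applied with $A = \bigcup_{j \geq m} C_j$, this yields
\[
\{\kurouter_{n\to\infty} C_n \cap K \neq \emptyset\} = \bigcap_{m \in \N} \bigcap_{N \in \N} \bigcup_{j \geq m} \{C_j \cap K_{1/N} \neq \emptyset\},
\]
a countable Boolean combination of events in $\F$, and hence itself in $\F$.

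The main obstacle I anticipate is justifying the passage from the open ball $B_r(x_k)$ to the compact set $K = \overline{B_s(x_k)}$: without local compactness one cannot invoke the finite intersection property on the nested family $\overline{\bigcup_{j \geq m} C_j} \cap K$, and without a reduction to a bounded set the argument can fail because the intersection may be emptied at ``infinity'' in a way that escapes control by open neighborhoods. Separability, by contrast, is used only to ensure that the basis $\mathcal{U}$ and the resulting Boolean combination remain countable.
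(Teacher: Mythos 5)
Your proposal is correct and follows essentially the same route as the paper: reduce to a countable basis of balls with compact closure, pass from the open ball to compact closed balls via a countable union over rational radii, apply Cantor's intersection theorem to the nested compacts $\overline{\bigcup_{j\ge m}C_j}\cap K$, and then replace the closure by a countable intersection of events involving open neighborhoods. The only (cosmetic) difference is that you approximate $\overline{A}\cap K\neq\emptyset$ using the open $1/N$-neighborhoods $K_{1/N}$, whereas the paper uses the increasing family of balls $B_t(x_k)$ with $t\in(s,r_k)\cap\Q$; both yield the same countable Boolean combination of generators of $\effros(X)$.
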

	
	\begin{proof}
		Let $D = \{x_n\}_{n\in\N}$ be a countable dense subset of $X$, and, using local compactness, for each $k \in \N$, get some $r_k > 0$ such that the closure of the ball $B_{r_k}(x_k)$ is compact. As above, showing $\left\{\kurouter_{n \to \infty} C_n \cap B_r(x_k) \ne \emptyset\right\}\in \F$ for all $k \in \N$ and all $r \in \Q \cap (0,r_k]$ gives the claim. Note first that
		\[
		\left\{\underset{n\to\infty}{\kurouter} C_n \cap B_r(x_k) \ne \emptyset\right\}
		=
		\bigcup_{s \in (0,r)\cap \Q} 
		\left\{\bigcap_{n=1}^\infty \overline{\bigcup_{m=n}^\infty C_m} \cap \overline{B}_s(x_k) \ne \emptyset\right\}.
		\]
		Now note that, for fixed $s\in (0,r_k)$, the sequence 
		\[
		\left\{\overline{\bigcup_{m=n}^\infty C_m} \cap \overline{B}_{s}(x_k)\right\}_{n =1}^{\infty}
		\]
		is a nonincreasing sequence of compact sets.  Therefore, by Cantor's intersection theorem \cite[Theorem~5.1]{Kelley_55}, 
		\[
		\left\{\bigcap_{n=1}^\infty \overline{\bigcup_{m=n}^\infty C_m} \cap \overline{B}_s(x_k) \ne \emptyset\right\}
		=
		\bigcap_{n=1}^\infty \left\{\overline{\bigcup_{m=n}^\infty C_m} \cap \overline{B}_s(x_k) \ne \emptyset\right\}.
		\]
		Again using compactness, for $0 < s < r_k$,
		\[
		\left\{\overline{\bigcup_{m=n}^\infty C_m} \cap \overline{B}_s(x_k) \ne \emptyset\right\}
		=
		\bigcap_{t \in (s,r_k)\cap \Q} \left\{\bigcup_{m=n}^\infty C_m \cap  B_t(x_k) \ne \emptyset\right\}.
		\]
		Lastly, for $t\in (0,r_k)$
		\[
		\left\{\bigcup_{m=n}^\infty C_m \cap  B_t(x_k) \ne \emptyset\right\}
		=
		\bigcup_{m=n}^\infty \left\{C_m \cap  B_t(x_k) \ne \emptyset\right\}.
		\]
		Since $\{C_m \cap  B_t(x_k) \ne \emptyset\}\in \effros(X)$ by definition, the result is proved.
	\end{proof}
	
	Interestingly, there does not appear to be a direct adaptation to the proof of Lemma~\ref{lem:kurouter-mble} which shows that $\kurinner_{n\to\infty}C_n$ is $\F/\effros(X)$-measurable whenever $\{C_n\}_{n\in\N}$ are so. This is because we relied on the characterization of the Kuratowski upper limit via an explicit formula, and, as shown in \cite{MR0089398}, there is no formula for the Kuratowski lower limit which is composed of countably many of the operations of union, intersection, and closure.
	
	\begin{lemma}\label{lem:hausdorff-mble}
		If $C:\Omega\to\cpt(X)$ is $\F/\effros(X)$-measurable and $C'\in\cpt(X)$ is fixed, then $\rho(C,C'):\Omega\to\R$ is $\F/\mathcal{B}(\R)$-measurable.
	\end{lemma}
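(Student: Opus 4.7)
The plan is to reduce the measurability of $\rho(C,C')$ to the defining generators of the Effros $\sigma$-algebra. Recall that
\[
\rho(C,C') = \max_{x \in C} \min_{x' \in C'} d(x,x') = \sup_{x \in C} g(x),
\]
where $g: X \to [0,\infty)$ is defined by $g(x) := \min_{x' \in C'} d(x,x') = d(x, C')$. Since $C'$ is non-empty and compact, the minimum is attained, and a standard triangle-inequality argument shows that $g$ is $1$-Lipschitz and hence continuous.

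To prove $\F/\mathcal{B}(\R)$-measurability of $\omega \mapsto \rho(C(\omega), C')$, it suffices to show that for every $t \in \R$ the set $\{\omega \in \Omega : \rho(C(\omega), C') > t\}$ lies in $\F$. First I will note that
\[
\rho(C(\omega), C') > t \iff \exists\, x \in C(\omega) : g(x) > t \iff C(\omega) \cap U_t \neq \emptyset,
\]
where $U_t := \{x \in X : g(x) > t\}$. Because $g$ is continuous, $U_t$ is open in $X$.

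By the description of $\effros(X)$ given earlier in the paper, the set $\{D \in \closed(X) : D \cap U_t \neq \emptyset\}$ is one of the standard generators of the Effros $\sigma$-algebra, so in particular it lies in $\effros(X)$. Since $C: \Omega \to \closed(X)$ is $\F/\effros(X)$-measurable by hypothesis, the preimage
\[
\{\omega \in \Omega : C(\omega) \cap U_t \neq \emptyset\} \in \F,
\]
which is exactly $\{\rho(C, C') > t\} \in \F$, completing the proof.

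There is no real obstacle here; the argument is essentially a one-line reduction, and the only small point to verify carefully is the continuity of $g$, which is immediate from the reverse triangle inequality. The proof does not require compactness of $C$ itself beyond what is already encoded in the assumption $C \in \cpt(X)$; indeed, compactness of $C'$ is what ensures the minimum defining $g$ is attained, and the non-emptiness of $C'$ is what guarantees $g$ is finite-valued.
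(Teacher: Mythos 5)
Your proof is correct and follows essentially the same route as the paper's: both reduce the measurability of $\rho(C,C')$ to the continuity of $x\mapsto d(x,C')$ and the fact that sets of the form $\{D\in\closed(X): D\cap U\neq\emptyset\}$ with $U$ open generate $\effros(X)$. The only cosmetic difference is that you work with the event $\{\rho(C,C')>t\}$ while the paper works with its complement $\{\rho(C,C')\le\alpha\}$.
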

	
	\begin{proof}
		Write $f:X\to\R$ for $f(x) = \min_{x'\in C'}d(x,x')$ and observe that $f$ is continuous. Thus, for any $\alpha\in\R$,
		\[
		\left\{\rho(C,C') \le \alpha\right\} = \left\{\max_{x\in C}f(x)\le \alpha\right\} = \left\{C\cap f^{-1}((\alpha,\infty)) = \emptyset\right\},
		\]
		and the right side is in $\F$ by the definition of $\effros(X)$.
	\end{proof}
	
	Combining Lemmas~\ref{lem:F-mble}, \ref{lem:F-restr-mble}, \ref{lem:kurouter-mble}, and \ref{lem:hausdorff-mble} shows that, when $1\le p < \infty$ and $(X,d)$ is separable and locally compact, the sets $\{\kurouter_{n \to \infty}F_p(\bar \mu_n)\subseteq F_p(\mu)\}$, $\{\kurouter_{n \to \infty}F_p^{\ast}(\bar \mu_n)\subseteq F_p^{\ast}(\mu)\}$, and $\{\lim_{n\to\infty}\rho(F_p(\bar \mu_n), F_p(\mu)) = 0\}$ are events in the $\sigma$-algebra $\F$, whether or not $\F$ is complete.
	Similarly, Lemmas~\ref{lem:F-restr-mble} and \ref{lem:hausdorff-mble} show that $\{\lim_{n \to \infty}\rho(F_p^\ast(\bar \mu_n),F_p^\ast(\mu)) = 0\}$ is in $\F$ for any metric space $(X,d)$.
	In general, it seems that measurability properties of the lower (and full) senses of convergence are harder to establish; it is not known to us when $\{C_n\}_{n\in\N}$ being $\F/\effros(X)$-measurable implies that $\kurinner_{n\to\infty}C_n$ is $\F/\effros(X)$-measurable, nor when $\rho(C,C_n)$ is $\F/\mathcal{B}(\R)$-measurable
	
	\section{Proofs of the Main Probabilistic Results}\label{sec:results}
	
	In this section, which is divided into several subsections, we prove the main probabilistic results of the paper.
	Some subsections conclude with concrete examples used to illustrate various aspects of the theory.
	
	\subsection{One-Sided SLLNs}
	
	First, we prove the strong laws of large numbers (SLLNs) for empirical restricted Fr\'echet means under various one-sided senses of convergence.
	For the setting, we let $(\Omega,\F,\P)$ denote a complete probability space on which independent, identically-distributed (i.i.d.) random elements $Y_1,Y_2,\ldots$ with common distribution $\mu\in\mathcal{P}(X)$ are defined.
	As before, let $\bar \mu_n$ for $n \in \N$, denote the empirical measure of the first $n$ samples.
	
	We begin with two auxiliary results.
	
	\begin{proposition}
		\label{prop:pw-LLN}
		Suppose that $(X,d)$ is a separable and that $r\ge 0$ is such that $\mu\in \mathcal{P}_r(X)$. 
		Then, $\bar \mu_n\to\mu$ holds in $\tau_{w}^{r}$ almost surely.
	\end{proposition}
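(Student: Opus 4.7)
The plan is to decompose convergence in $\weakp$ into its two defining pieces and verify each one almost surely, invoking Lemma~\ref{lem:one_x_all_x} to pass from the weak-plus-one-moment condition to the full definition of $\weakp$. Concretely, I would first show that $\bar\mu_n \to \mu$ in $\weak$ almost surely. This is Varadarajan's theorem for separable metric spaces: by separability the weak topology on $\mathcal{P}(X)$ is generated by a countable family of bounded continuous functions $\{\phi_k\}_{k\in\N}$ (for example, one may take Lipschitz functions built from a countable dense set), so that $\bar\mu_n\to\mu$ weakly iff $\int \phi_k\,d\bar\mu_n \to \int \phi_k\,d\mu$ for every $k$. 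For each fixed $k$, the classical Kolmogorov SLLN applied to the i.i.d.\ bounded sequence $\{\phi_k(Y_i)\}_{i\in\N}$ gives the required convergence off of a $\P$-null set $N_k$. Taking $N_w := \bigcup_k N_k$ yields a $\P$-null set outside of which $\bar\mu_n\to\mu$ in $\weak$.

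Next I would establish the moment condition along the sequence. Fix any $x_0\in X$. Since $\mu\in\mathcal{P}_p(X)$, the random variables $d^p(x_0,Y_i)$ are i.i.d.\ and integrable with expectation $f_{p,x_0}(\mu)$. Another application of the Kolmogorov SLLN gives
\[
f_{p,x_0}(\bar\mu_n) = \frac{1}{n}\sum_{i=1}^n d^p(x_0,Y_i) \longrightarrow f_{p,x_0}(\mu)
\]
off of some $\P$-null set $N_p$.

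Finally, on $\Omega \setminus (N_w \cup N_p)$, both $\bar\mu_n\to\mu$ in $\weak$ and $f_{p,x_0}(\bar\mu_n)\to f_{p,x_0}(\mu)$ hold. Lemma~\ref{lem:one_x_all_x} then upgrades condition (ii) to condition (i), so that $f_{p,x}(\bar\mu_n)\to f_{p,x}(\mu)$ for every $x\in X$, which is precisely convergence in $\weakp$. Since $\P(N_w \cup N_p) = 0$, this gives the claim.

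The only mild subtlety is the first step: one must justify that separability of $X$ reduces weak convergence of the empirical measures to countably many SLLN statements. This is standard but is the nontrivial ingredient; everything else is a direct application of the classical Kolmogorov SLLN combined with Lemma~\ref{lem:one_x_all_x}.
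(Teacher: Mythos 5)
Your proof is correct and follows essentially the same route as the paper: Varadarajan's theorem for separable spaces gives $\bar\mu_n\to\mu$ in $\weak$ almost surely, the classical SLLN handles the moment condition at a single point, and Lemma~\ref{lem:one_x_all_x} upgrades this to convergence in $\weakp$. The only difference is that you sketch a proof of Varadarajan's theorem where the paper simply cites it.
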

	
	\begin{proof}
		Since $(X,d)$ is separable, the main result in \cite{Varadarajan} gives that $\bar \mu_n\to\mu$ in $\weak$ almost surely.
		From Lemma~\ref{lem:one_x_all_x} it therefore suffices to show that $g_{r,x}(\bar \mu_n)$ converges almost surely to $g_{r,x}(\mu)$ for some fixed $x \in X$.
		But $g_{r,x}(\bar \mu_n)= \frac{1}{n} \sum_{i=1}^n d^r(x, Y_i)$, so this is immediate from the classical strong law of large numbers.
	\end{proof}
	
	\begin{proposition}\label{prop:iid-supports-converge}
		Suppose $(X,d)$ is separable.
		Then $\kurlimit_{n \to \infty}\supp(\bar \mu_n) = \supp(\mu)$.
	\end{proposition}
	
	\begin{proof}
		Again, \cite{Varadarajan} guarantees $\bar \mu_n\to\mu$ in $\weak$ almost surely, so by Lemma~\ref{lem:support-converge} we have $\supp(\mu)\subseteq \kurinner_{n\in\N}\supp(\bar \mu_n)$ almost surely.
		To see $\kurouter_{n\in\N}\supp(\bar \mu_n)\subseteq \supp(\mu)$ almost surely, note that $x\notin \supp(\mu)$ implies that there is an open set $U\subseteq X$ with $x\in U$ such that $\mu(U) = 0$.
		Then, $\bar \mu_n(U) = 0$ for all $n\in\N$ almost surely, hence $x\notin \kurouter_{n\in\N}\supp(\bar \mu_n)$ almost surely, which completes the proof.
	\end{proof}

	Now we turn to the main result.
	We let $1\le p < \infty$ be fixed, as before.
	In this first part, we show how the convergence results of Section~\ref{sec:topology} can be used to easily prove SLLNs for various kinds of restricted Fr\'echet means.
	
	\begin{theorem}\label{thm:Frechet-SLLN}
		Suppose that $(X,d)$ is separable and that $\mu\in \mathcal{P}_{p-1}(X)$.
		Then for any random elements $\{(C_n,\eta_n)\}_{n\in\N}$ and $(C,\eta)$ in $\closed(X)\times [0,\infty)$, we have
		\begin{equation}
			\kurouter_{n \to \infty}F_p(\bar \mu_n,C_n,\eta_n) \subseteq F_p(\mu,C,\eta)
		\end{equation}
		almost surely on $\{\kurlimit_{n \to \infty}C_n = C \text{ and }\eta_n\to \eta\}$.
	\end{theorem}
	
	\begin{proof}
		Immediate from Proposition~\ref{prop:pw-LLN} and Proposition~\ref{prop:joint-Frechet-cts}.
	\end{proof}

	Next we turn to the strong laws of large numbers in the upper Hausdorff upper sense, which provide a more quantitative notion of convergence.
	As we remarked before, we require the moment assumption $\mu\in\mathcal{P}_p(X)$ because of the corresponding assumption in Proposition~\ref{prop:joint-Frechet-Hausdorff-cts}, although we believe that this assumption is not necessary.
	As evidence of this, we point out that the special case where $C_n = C = X$ for $n\in\N$ and where $\{\eta_n\}_{n\in\N}$ is non-random with $\eta_n\to 0$ as $n\to\infty$ is proved in \cite[Theorem~2]{Schoetz2}.
	
	\begin{theorem}\label{thm:Frechet-SLLN-hausdorff}
		Suppose that $(X,d)$ has the Heine-Borel property, and that $\mu\in \mathcal{P}_{p}(X)$.
		Then for any random elements $\{(C_n,\eta_n)\}_{n\in\N}$ and $(C,\eta)$ in $\closed(X)\times [0,\infty)$, we have
		\begin{equation}
			\lim_{n\to\infty}\rho(F_p(\bar \mu_n,C_n,\eta_n),F_p(\mu,C,\eta))= 0
		\end{equation}
		almost surely on $\{\kurlimit_{n \to \infty}C_n = C \text{ and }\eta_n\to \eta\}$.
	\end{theorem}
	
	\begin{proof}
		Immediate from Proposition~\ref{prop:pw-LLN}, Proposition~\ref{prop:joint-Frechet-Hausdorff-cts}, and Lemma~\ref{lem:HB-sep}.
	\end{proof}
	
	\begin{corollary}
		Suppose that $(X,d)$ is separable and that $\mu\in \mathcal{P}_{p-1}(X)$.
		Then for any real-valued random variables $\{\eta_n\}_{n\in\N}$ and $\eta$, we have
		\begin{align*}
			\kurouter_{n \to \infty}F_p(\bar \mu_n,\eta_n) &\subseteq F_p(\mu,\eta) \text{ and } \\
			\kurouter_{n \to \infty}F_p^{\ast}(\bar \mu_n,\eta_n) &\subseteq F_p^{\ast}(\mu,\eta)
		\end{align*}
		almost surely on $\{\eta_n\to \eta\}$.
		If furthermore $(X,d)$ has the Heine-Borel property and $\mu\in\mathcal{P}_{p}(X)$, then
		\begin{align*}
			\rho(F_p(\bar \mu_n,\eta_n),F_p(\mu,\eta)) &\to 0\text{ and } \\
			\rho(F_p^{\ast}(\bar \mu_n,\eta_n),F_p^{\ast}(\mu,\eta)) &\to 0
		\end{align*}
		almost surely on $\{\eta_n\to \eta\}$.
	\end{corollary}
	
	\begin{proof}
		The two claims of first part follow from Theorem~\ref{thm:Frechet-SLLN}, respectively by taking $C_n = C = X$ and by taking $C_n = \supp(\bar \mu_n)$ and $C = \supp(\mu)$ and applying Proposition~\ref{prop:iid-supports-converge}.
		The two claims of the second part follow from Theorem~\ref{thm:Frechet-SLLN-hausdorff} and the same cases.
	\end{proof}
	
	By taking $\eta_n =0$ for all $n\in\N$, the result above recovers the fact that $\mu\in \mathcal{P}_{p-1}(X)$ is sufficient for $\kurouter_{n \to \infty}F_p(\bar \mu_n) \subseteq F_p(\mu)$ and $\kurouter_{n \to \infty}F_p^{\ast}(\bar \mu_n) \subseteq F_p^{\ast}(\mu)$ almost surely when $(X,d)$ is separable, and that $\mu\in \mathcal{P}_{p}(X)$ is sufficient for $\rho(F_p(\bar \mu_n),F_p(\mu))\to 0$ and $\rho(F_p^{\ast}(\bar \mu_n),F_p^{\ast}(\mu))\to 0$ almost surely when $(X,d)$ has the Heine-Borel property.
	However, we emphasize that the result is much more general than this, since we make no assumption about the random elements $\{\eta_n\}_{n\in\N}$ or their dependence structure.
	In particular, it seems interesting for statistical applications to choose these parameters adaptively as a function of the data; our result guarantees the correct long-term behavior as long as these adaptive parameters converge to some limit almost surely.
	A similar opportunity is available in choosing the domains $\{C_n\}_{n\in\N}$ adaptively as a function of the data, although it seems more challenging to choose these in a way that they almost surely have a limit in the Kuratowski sense.

	Next we note that, in the setting of Theorem~\ref{thm:Frechet-SLLN}, we have shown that, almost surely,
	\begin{equation}\label{eqn:inclusions}
		\kurinner_{n\to\infty}F_p(\bar \mu_n) \overset{(i)}{\subseteq} \kurouter_{n\to\infty}F_p(\bar \mu_n) \overset{(ii)}{\subseteq} F_p(\mu).
	\end{equation}
	This raises the question of whether the inclusions $(i)$ and $(ii)$ can be made strict and if they can be made with equality; the question of whether (ii) can be made strict was previously stated as open question (3) of \cite{HuckemannOberwolfach}, and we provide a positive answer.
	In fact, we give a very strong form of a positive answer by showing that all outcomes are possible.
	We state all of our results for unrestricted Fr\'echet means, but we believe that simple adaptations can make the same results true for support-restricted Fr\'echet means.
	
	\begin{example}[equality in both (i) and (ii)]
		Take $X=\R$ and $p=2$, and let $\mu\in \mathcal{P}_1(\R)$ be arbitrary.
		Then $F_p(\mu) = \{\int_{\R}y\, d\mu(y)\}$, and $F_p(\bar \mu_n) = \{\frac{1}{n}\sum_{i=1}^{n}Y_i\}$ for all $n\in\N$.
		Hence by the classical SLLN we get
		\begin{equation*}
			d_{H}\left(F_p(\bar \mu_n),F_p(\mu)\right) = \left|\int_{\R}y\, d\mu(y)-\frac{1}{n}\sum_{i=1}^{n}Y_i\right| \to 0
		\end{equation*}
		as $n\to\infty$ almost surely.
		By Lemma~\ref{lem:kuratowski-hausdorff-relations}, this implies
		\begin{equation*}
			\kurinner_{n\to\infty}F_p(\bar \mu_n) = \kurouter_{n\to\infty}F_p(\bar \mu_n) = F_p(\mu) = \left\{\int_{\R}y\, d\mu(y)\right\}
		\end{equation*}
		almost surely
	\end{example}
	
	\begin{example}[strictness in (i) and equality in (ii)]\label{ex:discrete-uniform}
		Consider $X = \{1,2,\ldots, m\}$ for $m\ge 2$, and write $d$ for the discrete metric, $d(x,y) = \ind\{x\neq y\}$.
		Let $\mu$ be the uniform measure on $X$.
		Observe that $F_p(\mu) = X$ and also $F_p(\bar \mu_n) = \arg \max_{x\in X}N_x^n$, where $N_x^n := \#\{1 \le i \le n : Y_i = x\}$.
		Moreover, because $X$ is discrete
		\begin{align*}
			\underset{n\to\infty}{\kurinner}F_p(\bar \mu_n) &= \bigcup_{n=1}^\infty \bigcap_{k=n}^\infty F_p(\bar \mu_k) \\
			\underset{n\to\infty}{\kurouter}F_p(\bar \mu_n) &= \bigcap_{n=1}^\infty \bigcup_{k=n}^\infty F_p(\bar \mu_k).
		\end{align*}
		The Kuratowski lower and upper limits of $F_p(\bar \mu_n)$ are both invariant under finite permutations of the 
		samples and so, by the Hewitt-Savage zero-one law, it follows that they are both constant almost surely. 
		Because the probability measure $\mu$ is invariant under permutations of the elements of $X$, 
		each of $\kurinner_{n\to\infty}F_p(\bar \mu_n)$ and $\kurouter_{n\to\infty}F_p(\bar \mu_n)$ must almost surely equal either $\emptyset$ or $X$.
		
		For the Kuratowski upper limit, note that, because $X$ is finite, there must exist $Z \in X$ such that $Z \in \arg \max_{x\in X}N_x^n$ for infinitely many $n \in \N$.
		Thus, the Kuratowski upper limit is nonempty almost surely and hence equal to $X$ almost surely.
		However, in order for the Kuratowski lower limit to be nonempty, and hence equal to $X$ almost surely, we must have $\arg \max_{x\in X}N_x^n = X$ for all $n \in \N$ sufficiently large. 
		This is equivalent to $N_1^n = \cdots = N_m^n = \frac{n}{m}$ for all $n \in \N$ sufficiently large.
		This is impossible, since the equalities can only occur when $n \in \N$ is divisible by $m$.
		Therefore,  $\kurinner_{n\to\infty}F_p(\bar \mu_n) = \emptyset$ almost surely and $\kurouter_{n\to\infty}F_p(\bar \mu_n) = F_p(\mu) = X$ almost surely.
	\end{example}

	
	
	\begin{example}[strictness in both (i) and (ii)]\label{ex:both-strict}
		For $m\ge 4$ put $X = \{0,1,2,\ldots, m\}$. 
		Consider the function $d:X\times X\to \R_{\ge 0}$ such that $d(w,w) = 0$ for all $w \in X$, $d(x,y) = 1$ for all $x,y\in\{1,2,\ldots, m\}$ with $x\neq y$, and $d(0,z) = (1-\frac{1}{m})^{1/p}$ for all $z\in \{1,2,\ldots, m\}$.
		It is easy to see that $d$ is a metric.
		Define the probability measure $\mu$ on $X$ to be the uniform distribution on $\{1,2,\ldots, m\}$. 
		
		Note that
		\[
		\int_{X}d^p(x,y)\, d\mu(y) = \frac{m-1}{m} = \int_{X}d^p(0,y)\, d\mu(y)
		\]
		for any $x\in \{1,2,\ldots, m\}$ and hence $F_p(\mu) = X$.
		
		Set $N_x^n := \#\{1 \le i \le n : Y_i = x\}$ for $x \in \{1,\ldots,m\}$.
		Observe for $x \in \{1, \ldots, m\}$ that
		\[
		\int_{X}d^p(x,y)\, d \bar \mu_n(y) 
		= 
		1 - \frac{N_x^n}{n}
		\]
		and
		\[
		\int_{X}d^p(0,y)\, d \bar \mu_n(y) 
		= 
		1 - \frac{1}{m}.
		\]
		Therefore, for $x \in \{1,\ldots,m\}$, we have $x\in F_p(\bar \mu_n)$ if and only if $x\in \arg\max_{y\in \{1,\ldots, m\}}N_y^n$ and $N_x^n \ge \frac{n}{m}$, but the first condition implies the second and so the first condition is necessary and sufficient for $x\in F_p(\bar \mu_n)$.
		Similarly, $0\in F_p(\bar \mu_n)$ if and only if $\frac{n}{m} \ge N_x^n$ for all $x \in \{1,\ldots,m\}$, and this is equivalent to $N_1^n = \cdots = N_m^n = \frac{n}{m}$.
		As in the previous example, we also have that $\kurinner_{n\to\infty}F_p(\bar \mu_n)$ and $\kurouter_{n\to\infty}F_p(\bar \mu_n)$ are invariant under finite permutations of the samples, so the Hewitt-Savage zero-one law implies that $\kurinner_{n\to\infty}F_p(\bar \mu_n)$ and $\kurouter_{n\to\infty}F_p(\bar \mu_n)$ are almost surely constant. 
		
		Using these observations, we claim that $\kurouter_{n\to\infty}F_p(\bar \mu_n) = \{1,\ldots, m\}$ holds almost surely.
		To see this, note that $(N_1^n,\ldots, N_m^n)$ are just the counts of a multinomial random variable with $m$ outcomes of equal likelihood, so, applying Stirling's formula (only when $n$ is a multiple of $m$), yields
		\[
		\P(0\in F_p(\bar \mu_n)) = \frac{n!}{\left(\left(\frac{n}{m}\right)!\right)^mm^n} \sim \frac{m^{\frac{m}{2}}}{(2\pi)^{\frac{m-1}{2}}}\cdot \frac{1}{n^{\frac{m-1}{2}}}.
		\]
		Since $m \ge 4$, this sequence is summable in $n \in m\N$, hence it follows from the Borel-Cantelli lemma that, with probability one, $0\in F_p(\bar \mu_n)$ holds only finitely often.
		This implies that $0\notin \kurouter_{n\to\infty}F_p(\bar \mu_n)$ almost surely. 
		From the symmetry properties of $\mu$ it follows that either $\kurouter_{n\to\infty}F_p(\bar \mu_n) = \emptyset$ almost surely or $\kurouter_{n\to\infty}F_p(\bar \mu_n) = \{1,\ldots,m\}$ almost surely.
		Since $\{1,\ldots, m\}$ is finite, there must exist some $Z\in \{1,\ldots, m\}$ with $Z\in \arg\max_{x\in \{1,\ldots, m\}}N_x^n$ for infinitely many $n \in \N$, hence $\kurouter_{n\to\infty}F_p(\bar \mu_n) \ne \emptyset$ almost surely.
		Therefore, $\kurouter_{n\to\infty}F_p(\bar \mu_n) = \{1,\ldots,m\}$ almost surely.
		
		We also claim that $\kurinner_{n\to\infty}F_p(\bar \mu_n) = \emptyset$ holds almost surely.
		Since  $\kurinner_{n\to\infty}F_p(\bar \mu_n) \subseteq \kurouter_{n\to\infty}F_p(\bar \mu_n)$, we know from the above that $\kurinner_{n\to\infty}F_p(\bar \mu_n) \subseteq \{1,\ldots,m\}$ almost surely.
		From the symmetry properties of $\mu$ it follows that either $\kurinner_{n\to\infty}F_p(\bar \mu_n) = \emptyset$ almost surely or $\kurinner_{n\to\infty}F_p(\bar \mu_n) = \{1,\ldots,m\}$ almost surely.
		As in the previous example, the latter requires that $N_1^n = \cdots = N_m^n = \frac{n}{m}$ for sufficiently large $n \in \N$, but this is impossible since these equalities can only occur when $m$ divides $n \in \N$. (Of course, we have actually shown above the stronger statement that, almost surely, $N_1^n = \cdots = N_m^n = \frac{n}{m}$ occurs for only finitely many $n \in \N$.)
		
		Summarizing these conclusions, we have
		\begin{align*}
			\kurinner_{n\to\infty}F_p(\bar \mu_n) &= \emptyset, \\
			\kurouter_{n\to\infty}F_p(\bar \mu_n) &= \{1,2,\ldots, m\}, \\
			F_p(\mu) &= \{0,1,2,\ldots, m\},
		\end{align*}
		almost surely.
	\end{example}
	
	Lastly, we need an example where equality is achieved in $(i)$ and strictness is achieved in $(ii)$.
	In fact, it turns out that such an example is impossible if $(X,d)$ is a finite metric space:
	Since $X$ is finite and each $F_p(\bar \mu_n)$ is non-empty, there must exist some $x\in X$ that is in infinitely many $F_p(\bar \mu_n)$.
	Then $x\in \kurouter_{n\to\infty}F_p(\bar \mu_n) \subseteq F_p(\mu)$, and, by Theorem~\ref{thm:full-SLLN-finite}, the equality in $(i)$ implies $F_p(\mu)= [x]_{\mu}$.
	So, for any other point $x'\in F_p(\mu)$, we have $d(x,\cdot) = d(x',\cdot)$ almost surely, and this implies $x'\in \kurouter_{n\to\infty}F_p(\bar \mu_n)$ almost surely, hence $\kurouter_{n\to\infty}F_p(\bar \mu_n) \supseteq F_p(\mu)$ almost surely. Nonetheless, we have a concrete example in the following.
	
	\begin{example}[equality in (i) and strictness in (ii)]\label{ex:irrationals}
		Set $X=(([0,1] \setminus \Q) \cup \{0,1\}$; that is, $X$ consists of the irrational numbers contained in the interval $[0,1]$ along with the endpoints $0$ and $1$.
		Take $d$ to be the restriction to $X$ of the usual metric on $\R$.
		It is clear that this is a separable metric space, since, for example, $(\sqrt{2} \Q)\cap (0,1)$ is a countable dense subset.
		(In fact, $X$ is not too exotic: It is clearly a $G_\delta$ subset of the Polish space $[0,1]$, so it is itself a Polish space \cite[Theorem~2.2.1]{Srivastava}.)
		
		Now fix an irrational number $0 < t < 1$, and let $\mu$ be the probability measure $(1-t) \delta_0 + t \delta_1$.
		We claim that, with probability one, $\bar \mu_n$ is of the form $\frac{n-N}{n} \delta_0 + \frac{N}{n} \delta_1$ for some random $1 \le N \le n-1$ for all $n \in \N$ sufficiently large.
		Indeed, this is equivalent to the statement that there exist positive integers $i$ and $j$ with $X_i = 0$ and $X_j = 1$, and this follows again from the second Borel-Cantelli lemma since $\sum_{n\in\N}\P(X_n = 0) = \sum_{n\in\N}\P(X_n = 1) = \infty$.
		So, for $p>1$, with probability one, $F_p(\bar \mu_n) = \emptyset$ for all $n \in \N$ sufficiently large;
		this is because, in this case, the function $g_{p,\bar \mu_n}:X\to \R$ has near-minimizers in $X$ but no true minimizers in $X$.
		Therefore, $\kurouter_{n\to\infty}F_p(\bar \mu_n) = \emptyset$ almost surely.
		Since $\kurinner_{n\to\infty}F_p(\bar \mu_n) \subseteq \kurouter_{n\to\infty}F_p(\bar \mu_n)$, this implies that $\kurinner_{n\to\infty}F_p(\bar \mu_n) = \emptyset$ almost surely as well.
		Therefore, $\kurlimit_{n\to\infty}F_p(\bar \mu_n) = \emptyset$ almost surely.
		Finally, we have $F_p(\mu) = \{t\}$ for $p > 1$, since $g_{p,\mu}:[0,1]\to\R$ has, by convexity, a unique minimizer in $[0,1]$, and this minimizer lies in $X$.
	\end{example}
	
	\subsection{Further One-Sided Limit Theorems}
	
	In this subsection we exploit the true power of the tools developed in Section~\ref{sec:topology}, which is that they can be applied to more general settings in which one wants to ``descend'' limit theorems in a space of measures to limit theorems for restricted Fr\'echet means.
	In particular, there is nothing constraining us to the case of i.i.d. samples nor to the case of strong laws of large numbers.
	As an illustration of this point, we easily prove two results with a different flavor than the rest of our main results.
	For the sake of simplicity, we focus just on unrestricted Fr\'echet $p$-means.
	
	First observe that for any random sets $\{C_n\}_{n\in\N}$ and $C$ in $\cpt(X)$, it is clear that $\rho(C_n,C)\to 0$ almost surely implies $\rho(C_n,C)\to 0$ \textit{in probability}, in the sense that for all $\varepsilon > 0$ we have $\P\{\rho(C_n,C)\ge\varepsilon\} \to 0$ as $n\to\infty$.
	(Recall from Lemma~\ref{lem:hausdorff-mble} that $\{\rho(C_n,C) \ge \varepsilon\}\in\F$, so these probabilities are well-defined.)
	It is interesting to try to understand the rate of decay of these probabilities.
	A useful framework for studying the decay of tail probabilities like those above is large deviations theory, the basics of which can be found in \cite{LargeDeviations}.
	
	The basic idea of our analysis is relatively standard: We begin with a known large deviations principle for empirical measures $\{\bar \mu_n\}_{n\in\N}$ in the space $(\mathcal{P}_{p}(X),\weakp)$, and we use the ``contraction principle'' for the continuous function $F_p:(\mathcal{P}_{p}(X),\weakp)\to (\cpt(X),\uphausdorfftopo)$ to deduce a large deviations principle for the Fr\'echet means $\{F_p(\bar \mu_n)\}_{n\in\N}$ in the space $(\cpt(X),\uphausdorfftopo)$.
	Fortunately, we have \cite{SanovWasserstein} which provides the necessary large deviations principle for $\{\bar \mu_n\}_{n\in\N}$ in $(\mathcal{P}_{p}(X),\weakp)$.
	Unfortunately, the standard formulation of the contraction principle \cite[Theorem~4.2.1]{LargeDeviations} does not apply, since it relies on both the domain and the codomain being $T_2$; ultimately, this is needed in order for compact sets to be closed.
	Our setting certainly does not have this property since, as we have seen before, the topology $\uphausdorfftopo$ is not $T_2$.
	Nonetheless, we can slightly modify the proof of the standard contraction principle to get an analogous but slightly weaker result.
	
	Towards proving the result, we define the function
	\begin{equation}
		H(\nu \, | \, \mu) = \begin{cases}
			\int_{X}\log(\frac{d\nu}{d\mu}) \, d\nu, &\text{ if } \nu \ll \mu, \\
			\infty, &\text{ else},
		\end{cases}
	\end{equation}
	called the \textit{relative entropy} for probability measures $\mu,\nu\in\mathcal{P}(X)$; the relative entropy appears in many large deviations problems involving empirical measures, including our own.
	Also note that, for this part only, we need to invoke topology explicitly, so we recall that $\uphausdorfftopo$ represents the topology on $\cpt(X)$ such that $\{C_n\}_{n\in\N}$ and $C$ in $\cpt(X)$ have $\lim_{n\to\infty}C_n = C$ iff $\lim_{n\to\infty}\rho(C_n,C)= 0$.
	Then we have the following result, which represents a simple step towards understanding the desired rate of decay.
	
	\begin{theorem}\label{thm:Fp-LDP}
		Suppose that $(X,d)$ is a compact metric space.
		Then, the function $I_{p,\mu}:(\cpt(X),\uphausdorfftopo)\to [0,\infty]$ defined via
		\begin{equation}
			I_{p,\mu}(C) = \inf\{H(\nu \, | \, \mu): \nu\in\mathcal{P}(X), \,   F_p(\nu) \supseteq C \},
		\end{equation}
		has compact sublevel sets and has the property that, for any set $A\in\mathcal{B}(\uphausdorfftopo)$,
		\begin{equation}\label{eqn:Fp-LDP-UB}
			\limsup_{n\to\infty}\frac{1}{n}\log\P\{F_p(\bar \mu_n) \in A\} \le -\inf\{I_{p,\mu}(C) : C\in \bar A \}.
		\end{equation}
		Consequently, for any $\varepsilon\ > 0$, there exist constants $c_1,c_2 > 0$ (depending on $(X,d)$, $\mu$, $p$, and $\varepsilon$) such that $\P\{\rho(F_p(\bar \mu_n),F_p(\mu))\ge\varepsilon\} \le c_1\exp(-c_2n)$ for all $n\in\N$.
	\end{theorem}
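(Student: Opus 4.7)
The plan is to derive the result as a one-sided version of the contraction principle applied to Sanov's theorem. Since $(X,d)$ is compact, Lemma~\ref{lem:Wasserstein-relations} gives $\mathcal{P}_p(X)=\mathcal{P}(X)$ and $\weakp=\weak$, and the Heine-Borel property holds trivially, so Proposition~\ref{prop:joint-Frechet-Hausdorff-cts} yields continuity of $F_p:(\mathcal{P}(X),\weak)\to(\cpt(X),\uphausdorfftopo)$. Sanov's theorem provides an LDP for $\bar\mu_n$ in $(\mathcal{P}(X),\weak)$ with good rate function $\nu\mapsto H(\nu\,|\,\mu)$; in particular $K_\alpha:=\{\nu:H(\nu\,|\,\mu)\le\alpha\}$ is weakly compact for each $\alpha\ge 0$.

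For the compact sublevel sets I would first show
\[
\{C\in\cpt(X):I_{p,\mu}(C)\le\alpha\}=\{C\in\cpt(X):C\subseteq F_p(\nu)\text{ for some }\nu\in K_\alpha\},
\]
where the nontrivial inclusion requires that the defining infimum be attained. Selecting a minimizing sequence $\nu_k$ in $K_{\alpha+1}$, passing to a weakly convergent subsequence $\nu_{k_j}\to\nu^\ast\in K_\alpha$ via compactness and lower semicontinuity of $H(\cdot\,|\,\mu)$, and applying Lemma~\ref{lem:kuratowski-hausdorff-relations} together with Proposition~\ref{prop:joint-Frechet-Hausdorff-cts} yields $F_p(\nu_{k_j})\to F_p(\nu^\ast)$ in $\upkuratowski$; evaluating on the constant sequence $x_j=x$ for each $x\in C$ then gives $C\subseteq F_p(\nu^\ast)$. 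Compactness of the sublevel set now follows from compactness of $F_p(K_\alpha)$ (the continuous image of $K_\alpha$) together with the key structural feature of $\uphausdorfftopo$: its basic open sets have the form $\{D\in\cpt(X):D\subseteq U\}$ for $U\subseteq X$ open, and so every $\uphausdorfftopo$-open set is downward-closed under the subset relation. Any open cover of the sublevel set also covers $F_p(K_\alpha)$, a finite subcover of the latter exists by compactness, and downward closure means this finite collection automatically covers every subset of its elements, hence all of the sublevel set.

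For the upper bound, I would apply Sanov's theorem to the weakly closed set $\overline{F_p^{-1}(A)}$, which by continuity of $F_p$ is contained in $F_p^{-1}(\bar A)$, to deduce
\[
\limsup_{n\to\infty}\frac{1}{n}\log\P\{F_p(\bar\mu_n)\in A\}\le-\inf\{H(\nu\,|\,\mu):F_p(\nu)\in\bar A\}.
\]
For every such $\nu$, choosing $C=F_p(\nu)\in\bar A$ in the defining infimum shows $I_{p,\mu}(F_p(\nu))\le H(\nu\,|\,\mu)$, so passing to infima replaces the right-hand side by $-\inf\{I_{p,\mu}(C):C\in\bar A\}$ as required.

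For the concentration consequence, set $A_\varepsilon:=\{C\in\cpt(X):\rho(C,F_p(\mu))\ge\varepsilon\}$; the triangle inequality for $\rho$ shows that the complement of $A_\varepsilon$ is $\uphausdorfftopo$-open, so $A_\varepsilon$ is closed and Borel with $\bar A_\varepsilon=A_\varepsilon$. It then suffices to verify $\delta:=\inf_{C\in A_\varepsilon}I_{p,\mu}(C)>0$, which I would establish by contradiction: a sequence $C_n\in A_\varepsilon$ with $I_{p,\mu}(C_n)\to 0$ would supply $\nu_n$ with $F_p(\nu_n)\supseteq C_n$ and $H(\nu_n\,|\,\mu)\to 0$, forcing $\nu_n\to\mu$ in $\weak$ (since $\mu$ is the unique zero of the good rate function $H(\cdot\,|\,\mu)$ on the compact metrizable space $\mathcal{P}(X)$); Proposition~\ref{prop:joint-Frechet-Hausdorff-cts} would then give $\rho(F_p(\nu_n),F_p(\mu))\to 0$, and the monotonicity $\rho(C_n,F_p(\mu))\le\rho(F_p(\nu_n),F_p(\mu))$ arising from $C_n\subseteq F_p(\nu_n)$ would contradict $C_n\in A_\varepsilon$. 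With $\delta>0$ in hand, the upper bound yields $\P\{F_p(\bar\mu_n)\in A_\varepsilon\}\le e^{-n\delta/2}$ for all sufficiently large $n$, and increasing the leading constant absorbs the finitely many small-$n$ terms into the stated form $c_1e^{-c_2 n}$. The principal obstacle throughout is the failure of $\uphausdorfftopo$ to be $T_2$, which compels the definition of $I_{p,\mu}$ via $F_p(\nu)\supseteq C$ rather than equality and means sublevel sets cannot be handled by the usual ``closed subset of compact'' argument; the downward closure of $\uphausdorfftopo$-open sets under the subset relation is the crucial feature that circumvents this.
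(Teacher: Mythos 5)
Your proposal is correct and follows the same overall architecture as the paper's proof (Sanov's theorem for $\bar\mu_n$ in $(\mathcal{P}(X),\weak)$, continuity of $F_p$ into $(\cpt(X),\uphausdorfftopo)$ from Proposition~\ref{prop:joint-Frechet-Hausdorff-cts}, and a one-sided contraction-principle argument), but two of your sub-arguments genuinely differ, and in one case improve on the paper. For the compact sublevel sets, the paper asserts the identity $\Psi_I(\alpha)=F_p(\Psi_H(\alpha))$ and concludes by continuity, but its own minimizing-sequence argument only delivers $C\subseteq F_p(\nu^\ast)$ for some $\nu^\ast$ in the $H$-sublevel set, not $C=F_p(\nu^\ast)$; indeed the identity fails already at $\alpha=0$ when $F_p(\mu)$ is not a singleton, since every compact subset of $F_p(\mu)$ has $I_{p,\mu}=0$. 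Your identification of $\Psi_I(\alpha)$ as the \emph{downward closure} of $F_p(K_\alpha)$, combined with the observation that every $\uphausdorfftopo$-open set is downward-closed under inclusion (so a finite subcover of $F_p(K_\alpha)$ automatically covers all subsets of its members), supplies exactly the missing step and is the more careful argument. For the positivity of the rate on $A_\varepsilon$, the paper goes through the transportation inequality and Pinsker's inequality to convert $H(\nu_n\,|\,\mu)\to 0$ into $W_p(\nu_n,\mu)\to 0$, whereas you use that $\mu$ is the unique zero of the good rate function $H(\cdot\,|\,\mu)$ together with compactness of $\mathcal{P}(X)$ and lower semicontinuity to force $\nu_n\to\mu$ in $\weak$; both are valid, yours is more elementary and avoids the Wasserstein metrization facts, and your additional monotonicity observation $\rho(C_n,F_p(\mu))\le\rho(F_p(\nu_n),F_p(\mu))$ lets you bound the (a priori weaker) constant $\inf_{C\in A_\varepsilon}I_{p,\mu}(C)$ directly rather than the intermediate infimum over $\nu$ used in the paper. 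The remaining steps (the upper bound via $\overline{F_p^{-1}(A)}\subseteq F_p^{-1}(\bar A)$ and the absorption of small $n$ into $c_1$) coincide with the paper's.
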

	
	\begin{proof}
		First we note that a compact metric space must be complete and separable: completeness is immediate from compactness, and for each $n\in\N$, there is a finite cover by balls of radius $2^{-n}$, and the centers of these balls form a countable dense set.
		Therefore, $(X,d)$ is a complete, separable metric space, so, by \cite[Theorem~1.1]{SanovWasserstein}, the random measures $\{\bar \mu_n\}_{n\in\N}$ satisfy a large deviations principle in $(\mathcal{P}_{p}(X),\weakp)$ with good rate function (that is, lower semicontinuous and with compact sublevel sets) given by $\nu\mapsto H(\nu \, | \, \mu)$.
		Also, Proposition~\ref{prop:joint-Frechet-Hausdorff-cts} guarantees that the map $F_p$ is continuous (hence measurable), so for any $A\in\mathcal{B}(\uphausdorfftopo)$ we have
		\begin{align*}
			&\limsup_{n \to \infty}\frac{1}{n}\log\P\{F_p(\bar \mu_n) \in A\} \\
			&\qquad =\limsup_{n \to \infty}\frac{1}{n}\log\P\{\bar \mu_n \in F_p^{-1}(A)\} \\
			&\qquad\le -\inf\{H(\nu \, | \, \mu): \nu\in \overline{F_p^{-1}(A)} \} \\
			&\qquad= -\inf\{H(\nu \, | \, \mu): \nu\in F_p^{-1}(\bar A) \} \\
			&\qquad= -\inf\{H(\nu \, | \, \mu): \nu\in\mathcal{P}_p(X), F_p(\nu) \in \bar A\} \\
			&\qquad= -\inf\{\inf\{H(\nu \, | \, \mu): \nu\in\mathcal{P}_p(X), F_p(\nu) = C\} : C \in \bar A\} \\
			&\qquad\le -\inf\{I_{p,\mu}(C): C \in \bar A\},
		\end{align*}
		where the first inequality is exactly the large deviations upper bound for the empirical measures.
		This proves the desired bound.
		
		Next we show that $I=I_{p,\mu}$ has compact level sets.
		To do this, write $\Psi_{I}(\alpha) = \{C\in\cpt(X):I(C) \le \alpha \}$ and $\Psi_{H}(\alpha)= \{\nu\in\mathcal{P}(X):H(\nu \, | \, \mu) \le \alpha \}$.
		On the one hand, if $\nu\in \Psi_H(\alpha)$, then we clearly have $I(F_p(\nu)) \le \alpha$, hence $\Psi_I(\alpha) \supseteq F_p(\Psi_{H}(\alpha))$.
		On the other hand, if $C\in \Psi_I(\alpha)$, then there exists a sequence $\{\nu_n\}_{n\in\N}$ in $\mathcal{P}(X)$ with $F_p(\nu_n) \supseteq C$ and $H(\nu_n|\mu)\downarrow I(C)$.
		Since $\Psi_I(\alpha+1)$ is compact, there exists some subsequence $\{n_k\}_{k\in\N}$ and some $\nu\in \mathcal{P}(X)$ such that  $\nu_{n_k}\to \nu$ in $\weakp$.
		Thus,  $H(\nu \, | \, \mu) \le \liminf_{k\to\infty}H(\nu_{n_k}|\mu) = I(C) \le \alpha$ by the lower semicontinuity of $H$ and also $C \subseteq \kurouter_{k \to \infty}F_p(\nu_{n_k}) \subseteq F_p(\nu)$ by the continuity of $F_p$.
		This proves $C\in F_p(\Psi_I(\alpha))$, hence $\Psi_I(\alpha) \subseteq F_p(\Psi_{H}(\alpha))$.
		We have therefore shown $\Psi_I(\alpha) = F_p(\Psi_{H}(\alpha))$, and since the continuous image of a compact set is compact, this shows that $\Psi_I(\alpha)$ is compact.
		
		For the last claim, note that for any $\varepsilon > 0$, we can apply \eqref{eqn:Fp-LDP-UB} to the closed set $A = \{C\in \cpt(X): \rho(C,F_p(\mu)) \ge \varepsilon\}$ to get
		\begin{align*}
			&\limsup_{n \to \infty}\frac{1}{n}\log\P\{\rho(F_p(\bar \mu_n),F_p(\mu)\ge \varepsilon\} \\
			&\qquad\le -\inf\{H(\nu \, | \, \mu): \nu\in\mathcal{P}_p(X), \rho(F_p(\nu),F_p(\mu))\ge \varepsilon \} := -c_0.
		\end{align*}
		Then, it suffices to show that $c_0 > 0$.
		To see this, write $D = \max_{x,x'\in X}d(x,x')$ for the diameter of $(X,d)$, and let $q\in (1,\infty]$ be the exponent conjugate to $p\in [1,\infty)$.
		By the transportation inequality \cite[Theorem 6.13]{Villani} and Pinsker's inequality \cite[Lemma~2.5]{Tsybakov}, we have, for any $\nu\in\mathcal{P}(X)$,
		
		\begin{align*}
			W_p(\nu,\mu) &\le 2^{\frac{1}{q}}\left(\int_{X}d^p(x,y)\,d|\nu-\mu|(y)\right)^{\frac{1}{p}} \\
			&\le 2^{\frac{1}{q}}D\left(\int_{X}d|\nu-\mu|(y)\right)^{\frac{1}{p}} \\
			&= 2^{\frac{1}{q}}D\|\nu-\mu\|_{\text{TV}}^{\frac{1}{p}} \\
			&\le 2^{\frac{1}{q}+\frac{1}{2}}D(H(\nu \, | \, \mu))^{\frac{1}{p}}.
		\end{align*}
		Now assume that we had $c_0 = 0$.
		This would imply that there exists a sequence $\{\nu_n\}_{n\in\N}$ in $F_p^{-1}(A)$ with $H(\nu_n|\mu)\to 0$, hence $W_p(\nu_n,\mu)\to 0$ by the above.
		But $W_p$ metrizes $\weakp = \weak$ whenever $(X,d)$ is compact and separable \cite[Theorem~6.8]{Villani}, hence $\nu_n\to \mu$ in $\weak$.
		Since $F_p^{-1}(A)$ is closed in $(\mathcal{P}(X),\weak)$, this implies $\mu \in F_p^{-1}(A)$ which is clearly a contradiction. Therefore, $c_0 > 0$.
		Finally, choosing $c_2 = c_0/2$ and $c_1$ sufficiently large gives the claim.
	\end{proof}
	
	While in Theorem~\ref{thm:Fp-LDP} we showed exponential asymptotic decay of the tail probabilities, it seems interesting (and important for applications) to try to understand finite-sample bounds.
	This task is undertaken in \cite{AGP,Schoetz1} for the case where $F_p(\mu)$ is assumed to be a singleton; in \cite{Schoetz1} there is also some indication of the way that the methods therein may be adapted to the general setting.
	These works seem to suggest that the finite-sample decay of these tail probabilities depends heavily on the geometry of $(X,d)$, thus it is interesting that Theorem~\ref{thm:Fp-LDP} holds without any such assumptions. 
	
	Next we consider an example of a convergence theorem with a different dependence structure than i.i.d. samples.
	Of course, Markov chains provide a simple extension.
	
	\begin{theorem}\label{thm:Fp-ergodic-MC}
		Suppose that $(X,d)$ is a finite metric space.
		Let $P$ be an irreducible and aperiodic Markov transition kernel from $X$ to itself, and let $\nu_{\infty}$ be its stationary distribution.
		For any initial distribution $\nu$, let $(Y_i)_{i\in \N}$ denote the Markov chain started in $\nu$ and following $P$, and for $n\in\N$ let $\bar \nu_n$ denote the empirical measure of $Y_1,\dots, Y_n$.
		Then, $\rho(F_p(\bar \nu_n),F_p(\nu_{\infty}))\to 0$ almost surely as $n\to\infty$.
	\end{theorem}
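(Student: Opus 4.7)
The plan is to reduce this statement, just like Theorems~\ref{thm:Frechet-SLLN} and \ref{thm:Frechet-SLLN-hausdorff}, to the continuity of the joint Fréchet mean map established in Proposition~\ref{prop:joint-Frechet-Hausdorff-cts}. What made the i.i.d. case work was only that the empirical measures converge almost surely in $\weakp$ to the true distribution (Proposition~\ref{prop:pw-LLN}); nothing in the continuity machinery cares about the dependence structure. So the whole task reduces to showing $\bar\nu_n \to \nu_\infty$ almost surely in $\weakp$, this time using the Markov ergodic theorem in place of the classical SLLN.

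Since $(X,d)$ is finite it is compact, so by Lemma~\ref{lem:Wasserstein-relations} we have $\mathcal{P}_p(X) = \mathcal{P}(X)$ and $\weakp = \weak$; in particular $\nu_\infty \in \mathcal{P}_p(X)$. Because $X$ is finite, a sequence of Borel probability measures converges weakly to $\nu_\infty$ if and only if the mass it assigns to each singleton $\{x\}$ converges to $\nu_\infty(\{x\})$. For each fixed $x \in X$, the strong law of large numbers for irreducible Markov chains on a finite state space (applied to the bounded functional $\ind_{\{x\}}$, and valid under any initial distribution $\nu$ by irreducibility; aperiodicity is not even needed for this step) gives
\[
\bar\nu_n(\{x\}) = \frac{1}{n}\sum_{i=1}^n \ind_{\{x\}}(Y_i) \longrightarrow \nu_\infty(\{x\}) \quad \text{almost surely.}
\]
Since $X$ is finite, intersecting the finitely many almost sure events (one per $x \in X$) gives a single almost sure event on which $\bar\nu_n \to \nu_\infty$ in $\weak = \weakp$.

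Finally, take $C_n = C = X$ (a constant, closed net trivially converging to $X$ in $\downkuratowski$). Since $(X,d)$ is finite it has the Heine--Borel property, so Proposition~\ref{prop:joint-Frechet-Hausdorff-cts} applies and yields
\[
F_p(\bar\nu_n) = F_p(\bar\nu_n, X) \longrightarrow F_p(\nu_\infty, X) = F_p(\nu_\infty)
\]
in $\uphausdorfftopo$ on the same almost sure event, which is the desired conclusion.

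The main obstacle, such as it is, is the Markov ergodic input: one needs almost sure convergence of the empirical measure under an arbitrary (not necessarily stationary) initial distribution $\nu$. On a finite state space this is a classical consequence of irreducibility — either cited directly, or obtained by observing that successive visits to any fixed recurrent state decompose the path into i.i.d. excursions to which the ordinary SLLN applies — so no new work is required. Everything else is structural and follows from the topological preliminaries already developed.
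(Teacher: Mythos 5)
Your proposal is correct and follows essentially the same route as the paper: establish $\bar\nu_n \to \nu_\infty$ in $\weakp$ almost surely via the ergodic theorem for finite-state Markov chains, then invoke the continuity of the joint Fr\'echet mean map from Proposition~\ref{prop:joint-Frechet-Hausdorff-cts}. The only cosmetic difference is that you pass through $\weakp = \weak$ via compactness (Lemma~\ref{lem:Wasserstein-relations}) while the paper verifies $f_{p,x}(\bar\nu_n)\to f_{p,x}(\nu_\infty)$ directly; both are immediate in the finite setting.
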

	
	\begin{proof}
		It is classical that in this setting  $\bar \nu_n\to \nu_{\infty}$ in $\weak$ almost surely, and hence that, for any $x\in X$, 
		\[
		\frac{1}{n}\sum_{i=1}^{n}d^p(x,Y_i) \to \int_{X}d^p(x,y)\, d\nu_{\infty}(y)
		\]
		almost surely.
		This is equivalent to $g_{p,x}(\bar \nu_n)\to g_{p,x}(\nu_{\infty})$ almost surely, so $\bar \nu_n\to \nu_{\infty}$ in $\weakp$ almost surely.
		Therefore, the result follows from Lemma~\ref{prop:joint-Frechet-Hausdorff-cts}.
	\end{proof}
	
	\subsection{Two-Sided SLLNs}
	
	In this subsection we return to the setting of SLLNs for i.i.d. samples, focusing now on the question of when we have a SLLN under some ``two-sided'' notion of convergence.
	It turns out that all of these results will rely on $(X,d)$ having the Heine-Borel property, and hence all convergences are bona fide convergences in some topology on $\cpt(X)$.
	By Theorem~\ref{thm:Frechet-SLLN-hausdorff}, the almost sure convergence will always be at least as strong as $\uphausdorfftopo$.
	In particular, we will provide various sufficient conditions for  almost sure convergence in the topology $\Fell|_{\cpt(X)}\vee \uphausdorfftopo$, the join of $\uphausdorfftopo$ with the restriction of Fell topology $\Fell$ to $\cpt(X)$; for simplicity let us abuse notation and write $\Fell\vee\uphausdorfftopo$ in place of $\Fell|_{\cpt(X)}\vee\uphausdorfftopo$.
	Of course, this convergence is at least as strong as convergence in $\Fell$ alone.
	
	We have seen that proving SLLNs (and other limit theorems) in ``upper topologies'' amounts to combining limit theorems for empirical measures with the continuity-like results of Section~\ref{sec:topology}.
	In contrast, proving SLLNs for ``lower topologies'' requires more probabilistic and geometric reasoning.
	For the sake of simplicity, we focus on the case of unrestricted and support-restricted Fr\'echet means with $\eta = 0$.
	We begin by introducing a notion of equivalence in metric measure spaces; we do not know if this notion has already been studied in metric geometry or elsewhere.
	
	\begin{definition}\label{def:equivalence}
		Let $(X,d)$ be any metric space.  Fix $\mu\in \mathcal{P}(X)$. For $x,x'\in X$, write $x\leadsto_{\mu} x'$ if, for sufficiently small $\varepsilon>0$, there exists a measurable map $\phi:B_{\varepsilon}(x)\to B_{\varepsilon}(x')$ such that for any $z\in B_{\varepsilon}(x)$ we have that $d(\phi(z),\cdot) = d(z,\cdot)$ holds $\mu$-almost everywhere.
		Write $x\sim_{\mu}x'$ if $x\leadsto_{\mu} x'$ and $x'\leadsto_{\mu} x$.
		It is easily seen that $\sim_{\mu}$ is an equivalence relation on $X$, and we write $[x]_{\mu}$ for the equivalence class of a point $x$ with respect to $\sim_{\mu}$.
	\end{definition}
	
	Intuitively, the condition $x\leadsto_{\mu} x'$ says that, in terms of distances to a sample from $\mu$, any point sufficiently close to $x$ is indistinguishable from some point close to $x'$.
	For an example where the equivalence relation  $\sim_{\mu}$ has non-trivial equivalence classes, consider $\sone = \R/\Z$ with its geodesic metric $d$, and $\mu = \frac{1}{2}\delta_{0\text{ mod } 1} + \frac{1}{2}\delta_{\frac{1}{2}\text{ mod } 1}$.
	Then we have $(\frac{1}{4} + t)\text{ mod } 1 \sim_{\mu} (\frac{3}{4}-t) \text{ mod } 1$  for any $t\in [-\frac{1}{4},\frac{1}{4}]$.
	
	\begin{theorem}\label{thm:Fp-full-SLLN}
		Suppose  that $(X,d)$ has the Heine-Borel property, and that $\mu\in \mathcal{P}_p(X)$.
		If $F_p(\mu) =[x]_{\mu}$ (respectively, $F_p^\ast(\mu) =[x]_{\mu}$) for some $x\in X$, then $F_p(\bar \mu_n) \to F_p(\mu)$ (respectively, $F_p^\ast(\bar \mu_n) \to F_p^\ast(\mu)$) in $\Fell\vee \uphausdorfftopo$ almost surely.
	\end{theorem}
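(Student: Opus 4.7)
The plan is to strengthen the $\uphausdorfftopo$-convergence from Theorem~\ref{thm:Frechet-SLLN-hausdorff} by a separate $\downkuratowski$-convergence argument that exploits the hypothesis $F_p(\mu) = [x]_\mu$ to transport information between samples near different points of $F_p(\mu)$. First I would apply Theorem~\ref{thm:Frechet-SLLN-hausdorff} to obtain $F_p(\bar\mu_n) \to F_p(\mu)$ in $\uphausdorfftopo$ almost surely; it then remains to show that almost surely $F_p(\mu) \subseteq \kurinner_{n \to \infty} F_p(\bar\mu_n)$, i.e., that for each $y \in F_p(\mu)$ one has $d(y, F_p(\bar\mu_n)) \to 0$. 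By separability of $X$ and compactness of $F_p(\mu)$ (Lemma~\ref{lem:Fp-closed-bdd} combined with the Heine-Borel property), it suffices to verify this for $y$ ranging over a countable dense subset of $F_p(\mu)$ and $\delta$ ranging over $\Q \cap (0,\infty)$; the general case then follows by a triangle inequality.

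Fix such a $y$ and $\delta > 0$. For every $x^* \in F_p(\mu) = [x]_\mu$ we have $x^* \sim_\mu y$, so Definition~\ref{def:equivalence} provides some $\varepsilon(x^*) \in (0, \delta)$ and a measurable map $\phi_{x^*} : B_{\varepsilon(x^*)}(x^*) \to B_{\varepsilon(x^*)}(y)$ such that for each $z \in B_{\varepsilon(x^*)}(x^*)$ the set $N_{x^*,z} := \{u \in X : d(\phi_{x^*}(z), u) \neq d(z, u)\}$ is $\mu$-null. I would then cover the compact set $F_p(\mu)$ by finitely many balls $B_{\varepsilon(x^*_j)/2}(x^*_j)$, $j = 1, \ldots, k$, fix a countable dense set $D_j \subseteq B_{\varepsilon(x^*_j)}(x^*_j)$, and observe that $N := \bigcup_{j=1}^k \bigcup_{z \in D_j} N_{x^*_j, z}$ is a $\mu$-null set. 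On the almost sure event that every $Y_i$ avoids $N$, the identity $f_{p, \bar\mu_n}(\phi_{x^*_j}(z)) = f_{p, \bar\mu_n}(z)$ then holds simultaneously for all $j$, all $z \in D_j$, and all $n \in \N$.

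Working on the intersection of this event with the almost sure event from Theorem~\ref{thm:Frechet-SLLN-hausdorff}, upper Hausdorff convergence forces every $x_n \in F_p(\bar\mu_n)$ to lie in some ball $B_{\varepsilon(x^*_{j_n})}(x^*_{j_n})$ for $n$ large. I would then approximate $x_n$ by a sequence $z_\ell \to x_n$ in $D_{j_n}$; continuity of $f_{p, \bar\mu_n}$ (Lemma~\ref{lem:fp-cts}) gives $f_{p, \bar\mu_n}(z_\ell) \to I_n := \inf_{x \in X} f_{p, \bar\mu_n}(x)$, and hence $f_{p, \bar\mu_n}(\phi_{x^*_{j_n}}(z_\ell)) \to I_n$ by the identity from the previous step. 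Because each $\phi_{x^*_{j_n}}(z_\ell)$ lies in $B_{\varepsilon(x^*_{j_n})}(y) \subseteq B_\delta(y)$ and the closed ball $\overline{B_\delta(y)}$ is compact by the Heine-Borel property, a subsequence of $\{\phi_{x^*_{j_n}}(z_\ell)\}_\ell$ converges to some $w_n \in \overline{B_\delta(y)}$ with $f_{p, \bar\mu_n}(w_n) = I_n$, so $w_n \in F_p(\bar\mu_n)$. This delivers $d(y, F_p(\bar\mu_n)) \le \delta$ eventually, as required.

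The main obstacle is that $\phi_{x^*}$ is merely measurable, not continuous, so the $\mu$-almost sure distance-preserving identity cannot be extended pointwise from the countable set $D_{j_n}$ to the random point $x_n$; the workaround described above leverages continuity of $f_{p,\bar\mu_n}$ together with Heine-Borel compactness of $\overline{B_\delta(y)}$ to recover an honest minimizer $w_n$ from an approximating sequence. The restricted case $F_p^\ast$ proceeds by a parallel argument, with the extra ingredient that $y \in F_p^\ast(\mu) \subseteq \supp(\mu)$ guarantees, via the strong law of large numbers applied to $\bar\mu_n(B_\delta(y))$, the existence of samples $Y_i$ accumulating near $y$; one chooses the approximating sequence from such samples so that the resulting candidate $w_n$ lies in $\supp(\bar\mu_n)$ and thus qualifies as a genuine restricted minimizer.
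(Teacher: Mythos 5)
Your argument for the unrestricted means is correct, and it is best described as a more careful rendering of the paper's idea rather than a different one: both proofs combine the $\uphausdorfftopo$-convergence of Theorem~\ref{thm:Frechet-SLLN-hausdorff} with the maps $\phi$ from Definition~\ref{def:equivalence} to transport an empirical minimizer located near one point of $F_p(\mu)$ into an exact empirical minimizer near the target point $y$. The paper argues by contradiction, extracting via Lemma~\ref{lem:joint-Frechet-cpt} a subsequence $x_{n_{k_j}}\to x\in F_p(\mu)$ and then applying the identity $d(\phi(z),\cdot)=d(z,\cdot)$ directly at the random point $z=x_{n_{k_j}}$; since the exceptional $\mu$-null set in Definition~\ref{def:equivalence} may depend on $z$ and $x_{n_{k_j}}$ is a function of the samples, that step needs exactly the device you supply (restrict the identity to a deterministic countable dense set, pass to the random minimizer by continuity of $f_{p,\bar\mu_n}$, and recover an exact minimizer $w_n\in\overline{B}_\delta(y)$ by Heine--Borel compactness). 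For $F_p$ your proof is therefore, if anything, tighter than the paper's.

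The restricted case is where your sketch has a genuine gap. Knowing that samples accumulate near $y$ does not produce a sample point near $y$ that \emph{exactly} minimizes $f_{p,\bar\mu_n}$ over $\supp(\bar\mu_n)=\{Y_1,\dots,Y_n\}$: the chain $f_{p,\bar\mu_n}(\phi(z_\ell))=f_{p,\bar\mu_n}(z_\ell)\to I_n$ forces the limit $w_n$ to be a minimizer only because $z_\ell\to x_n$, and if you instead draw the approximating sequence from samples near $y$ you lose that anchor, while if you keep $z_\ell\to x_n$ the images $\phi(z_\ell)$ need not be, or converge to, sample points. (The paper's own proof silently asserts $\phi(x_{n_{k_j}})\in\supp(\bar\mu_{n_{k_j}})$ at the corresponding step, so it shares this defect.) The case can instead be closed by a separate observation: two distinct points of $\supp(\mu)$ are never $\sim_\mu$-equivalent, since applying $d(\phi(x),\cdot)=d(x,\cdot)$ to $u$ ranging over the positive-mass balls $B_\eta(x)$ gives $d(\phi(x),x)<2\eta$ for every $\eta>0$, hence $\phi(x)=x\in B_\varepsilon(x')$ for all small $\varepsilon$, i.e.\ $x=x'$. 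Because $F_p^\ast(\mu)=[x]_\mu\subseteq\supp(\mu)$, the hypothesis in the restricted case actually forces $F_p^\ast(\mu)=\{x\}$, and then the $\downkuratowski$-convergence follows at once from Theorem~\ref{thm:Frechet-SLLN-hausdorff} together with the nonemptiness of $F_p^\ast(\bar\mu_n)$ guaranteed by Lemma~\ref{lem:Fp-nonempty}; no transport argument is needed there at all.
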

	
	\begin{proof}
		By Theorem~\ref{thm:Frechet-SLLN-hausdorff}, we only need to consider the Kuratowski lower limit.
		To do this, we return to the notation of the proof of Theorem~\ref{thm:Frechet-SLLN}.
		Let $x'\in F_p(\mu,C)$ be arbitrary, and take any open set $U\subseteq X$ with $x'\in U$.
		Assume for the sake of contradiction that, on some event of positive probability, the sets $F_p(\bar \mu_n,C_n)$ do not eventually intersect $U$.
		Then on this event there exists a subsequence $\{n_k\}_{k\in\N}$ such that $F_p(\bar \mu_{n_k},C_{n_k})$ and $U$ are disjoint for all $k\in\N$.
		Take any sequence $\{x_{n_k}\}_{k\in\N}$ with $x_{n_k}\in F_p(\bar \mu_{n_k},C_{n_k})$, and apply Lemma~\ref{lem:joint-Frechet-cpt} to get that there exists a further subsequence $\{k_j\}_{j\in\N}$ and a point $x\in F_p(\mu,C)$ with $x_{n_{k_j}}\to x$.
		By assumption, $x\sim_{\mu} x'$.
		So, we can get $\varepsilon > 0$ sufficiently small such that $B_{\varepsilon}(x')\subseteq U$, and such that there exists a measurable map $\phi:B_{\varepsilon}(x)\to B_{\varepsilon}(x')$ with $d(\phi(z),\cdot) =d(z,\cdot)$ holding $\mu$-almost everywhere for all $z\in B_{\varepsilon}(x)$.
		For sufficiently large $j\in\N$, it follows that $x_{n_{k_j}}\in B_{\varepsilon}(x)$, hence
		
		\begin{equation}
			\frac{1}{n_{k_j}}\sum_{i=1}^{n_{k_j}}d^p(\phi(x_{n_{k_j}}),Y_i) = \frac{1}{n_{k_j}}\sum_{i=1}^{n_{k_j}}d^p(x_{n_{k_j}},Y_i).
		\end{equation}
		This shows that $\{\phi(x_{n_{k_j}})\}_{j\in\N}$ is eventually in $F_p(\bar \mu_{n_{k_j}},C_{n_{k_j}})\cap U$, and this is a contradiction since $\{n_k\}_{k\in\N}$ was constructed so that $F_p(\bar \mu_{n_{k}},C_{n_k})$ and $U$ were disjoint.
	\end{proof}
	
	The preceding result essentially guarantees that there is full Kuratowski convergence (or equivalently, convergence in the Fell topology) of the empirical Fr\'echet $p$-means (respectively, the empirical support-restricted Fr\'echet  $p$-means) provided that $F_p(\mu)$ (respectively, $F_p^\ast(\mu)$) is a singleton once we pass to a suitable quotient.
	To formalize this claim, let $X_{\mu}$ denote the collection of all $\sim_{\mu}$-equivalence classes of $X$, and let it be endowed with the quotient topology.
	Define $\pi:X\to X_{\mu}$ by $\pi(x) := [x]_{\mu}$, and note that, by the universal property of the quotient topology, there exists a unique continuous map $\tilde{g}_{p,\mu}:X_{\mu}\to [0,\infty)$ such that $\tilde{g}_{p,\mu} \circ \pi= g_{p,\mu}$.
	In this language, the hypothesis of Theorem~\ref{thm:Fp-full-SLLN} is just that $\tilde{g}_{p,\mu}$ has a unique minimizer on $X_{\mu}$ (respectively, $\pi(\supp(\mu))$).
	The following result also holds.
	
	\begin{corollary}\label{cor:Fp-singleton-SLLN}
		Suppose that $(X,d)$ has the Heine-Borel property, and that $\mu\in\mathcal{P}_p(X)$. If $F_p(\mu)$ (respectively, $F_p^\ast(\mu)$) consists of a single point, then  $F_p(\bar \mu_n)\to F_p(\mu)$ (respectively, $F_p^\ast(\bar \mu_n)\to F_p^\ast(\mu)$) in $\Fell\vee\uphausdorfftopo$ almost surely.
	\end{corollary}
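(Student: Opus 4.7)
The plan is to deduce the corollary directly from Theorem~\ref{thm:Frechet-SLLN-hausdorff} together with the non-emptiness guarantee of Lemma~\ref{lem:Fp-nonempty}, rather than going through the equivalence-class machinery of Theorem~\ref{thm:Fp-full-SLLN}. The driving observation is that when the limit of a convergent sequence in $\uphausdorfftopo$ is a singleton and the approximating sets are non-empty, convergence is automatically as strong as $\downkuratowski$.

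Concretely, first I would invoke Theorem~\ref{thm:Frechet-SLLN-hausdorff}, whose hypotheses on $(X,d)$ and $\mu$ are identical to those assumed here, to obtain that $F_p(\bar\mu_n)\to F_p(\mu)$ and $F_p^\ast(\bar\mu_n)\to F_p^\ast(\mu)$ in $\uphausdorfftopo$ almost surely. Next, since $(X,d)$ has the Heine-Borel property, Lemma~\ref{lem:Fp-nonempty} ensures that $F_p(\bar\mu_n)$ and $F_p^\ast(\bar\mu_n)$ are non-empty (and compact) for every $n$. Writing $F_p(\mu)=\{x\}$ (respectively, $F_p^\ast(\mu)=\{x\}$), I would work pointwise on the almost sure event $A$ where the $\uphausdorfftopo$-convergence holds. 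On $A$, pick any selection $x_n\in F_p(\bar\mu_n)$; then $d(x_n,x)\le \rho(F_p(\bar\mu_n),\{x\})\to 0$, so $x_n\to x$. Consequently every open $U\ni x$ satisfies $U\cap F_p(\bar\mu_n)\ne\emptyset$ for all large $n$, i.e.\ $x\in \kurinner_{n\to\infty}F_p(\bar\mu_n)$ on $A$, which is precisely $\downkuratowski$-convergence. The identical argument, applied to selections from $F_p^\ast(\bar\mu_n)$, handles the restricted case.

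There is essentially no obstacle on this route: the only subtle point is that the selections $x_n$ need not be made measurably, but this is irrelevant since the argument is carried out pointwise on the almost sure event $A$, and the notion of convergence being established is topological. An alternative would be to verify the hypothesis of Theorem~\ref{thm:Fp-full-SLLN} directly by showing $[x]_\mu=\{x\}$: if $x'\sim_\mu x$ with transporting map $\phi$ from Definition~\ref{def:equivalence}, then the identity $d(\phi(x),\cdot)=d(x,\cdot)$ $\mu$-almost surely gives $f_{p,\mu}(\phi(x))=f_{p,\mu}(x)$, forcing $\phi(x)\in F_p(\mu)=\{x\}$; combined with $\phi(x)\in B_\varepsilon(x')$ for arbitrarily small $\varepsilon$, this yields $x=x'$. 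That alternative is slightly more delicate for the restricted case, since $\phi(x)$ need not lie in $\supp(\mu)$, so the direct deduction above is preferable.
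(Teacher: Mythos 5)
Your proof is correct, but it takes a genuinely different route from the paper's. The paper deduces the corollary from Theorem~\ref{thm:Fp-full-SLLN} by verifying its hypothesis: it shows that a singleton $F_p(\mu,C)=\{x\}$ forces $[x]_\mu=\{x\}$, using the maps $\phi_\varepsilon$ of Definition~\ref{def:equivalence} together with the classical SLLN to conclude $f_{p,\mu}(x')=f_{p,\mu}(x)$ for any $x'\sim_\mu x$ --- essentially the ``alternative'' you sketch at the end. Your main argument instead bypasses the $\sim_\mu$ machinery entirely: it extracts the lower (Kuratowski) convergence directly from Theorem~\ref{thm:Frechet-SLLN-hausdorff} plus the non-emptiness guaranteed by Lemma~\ref{lem:Fp-nonempty}, via the elementary observation that $\uphausdorfftopo$-convergence of non-empty sets to a singleton $\{x\}$ forces every selection to converge to $x$, hence $x\in\kurinner_{n\to\infty}F_p(\bar\mu_n)$. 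This is shorter and, as you note, insensitive to the delicacy in the restricted case (whether $\phi(x)$ lands in $\supp(\mu)$), which is a genuine wrinkle in the equivalence-class route. It also yields a slightly stronger conclusion for free: since $\rho(\{x\},F_p(\bar\mu_n))=\min_{z\in F_p(\bar\mu_n)}d(x,z)\le\rho(F_p(\bar\mu_n),\{x\})\to 0$, you in fact obtain convergence in the full Hausdorff topology $\hausdorfftopo$, which is at least as strong as $\uphausdorfftopo\vee\downkuratowski$ by Lemma~\ref{lem:kuratowski-hausdorff-relations}. What the paper's route buys in exchange is the structural by-product that a singleton (restricted) Fr\'echet $p$-mean is automatically its own $\sim_\mu$-equivalence class, which connects the corollary to the necessary-and-sufficient characterization of Theorem~\ref{thm:full-SLLN-finite}.
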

	
	\begin{proof}
		By Theorem~\ref{thm:Fp-full-SLLN} and adopting the notation therein, it suffices to show that $F_p(\mu,C) = \{x\}$ implies $[x]_{\mu} = \{x\}$.
		To do this, suppose that $x\sim_{\mu}x'$, and, for sufficiently small $\varepsilon > 0$, get a measurable map $\phi_{\varepsilon}:B_{\varepsilon}(x)\to B_{\varepsilon}(x')$ satisfying the definition of $\sim_{\mu}$.
		Then, the points $\{\phi_{\varepsilon}(x)\}_{\varepsilon > 0}$ have $d(\phi_{\varepsilon}(x),\cdot) = d(x,\cdot)$ holding $\mu$-almost everywhere, as well as $\phi_{\varepsilon}(x)\to x'$ as $\varepsilon \to 0$.
		Therefore, 
		
		\begin{equation}
			\frac{1}{n}\sum_{i=1}^{n}d^p(x,Y_i) = \frac{1}{n}\sum_{i=1}^{n}d^p(\phi_{\varepsilon}(x),Y_i)
		\end{equation}
		for all $n\in\N$ almost surely, so, taking first $\varepsilon \to 0$ and then $n\to\infty$ with the classical SLLN, we get $g_{p,\mu}(x) = g_{p,\mu}(x')$.
		This implies $x'\in F_p(\mu)$ and hence $x'=x$ as claimed.
	\end{proof}
	
	Next we show that, for finite metric spaces, the hypothesis of Theorem~\ref{thm:Fp-full-SLLN} is not only sufficient but also necessary.
	Observe that, if $(X,d)$ is discrete and $\mu\in\mathcal{P}(X)$, then points $x,x'\in X$ have $x\sim_{\mu}x'$ if and only if $d(x,\cdot) = d(x',\cdot)$ holds $\mu$-almost everywhere.
	
	\begin{theorem}\label{thm:full-SLLN-finite}
		Suppose that $(X,d)$ is finite and that $\mu\in \mathcal{P}(X)$. Then, $F_p(\bar \mu_n) \to F_p(\mu)$ (respectively, $F_p^\ast(\bar \mu_n) \to F_p^\ast(\mu)$) in $\Fell$ almost surely if and only if $F_p(\mu) =[x]_{\mu}$ (respectively, $F_p^\ast(\mu) =[x]_{\mu}$) for some $x\in X$. 
	\end{theorem}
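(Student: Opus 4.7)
The plan is to reduce the ``if'' direction to Theorem~\ref{thm:Fp-full-SLLN} and to prove the ``only if'' direction by a contrapositive Borel--Cantelli argument. First I will observe that since $X$ is finite it is compact with discrete metric topology, and in particular has the Heine--Borel property. By Lemma~\ref{lem:kuratowski-hausdorff-relations}, in this setting $\hausdorfftopo = \kuratowski = \upkuratowski \vee \downkuratowski$ on $\cpt(X)$. Hence the ``if'' direction is immediate: if $F_p(\mu) = [x]_\mu$ (respectively $F_p^\ast(\mu) = [x]_\mu$) for some $x$, Theorem~\ref{thm:Fp-full-SLLN} gives almost sure convergence in $\uphausdorfftopo \vee \downkuratowski$, which here coincides with $\hausdorfftopo$.

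For the converse I will argue the contrapositive. In the discrete setting $x \sim_\mu x'$ reduces to $d(x,y) = d(x',y)$ for every $y \in \supp(\mu)$, and this immediately forces $f_{p,\mu}(x) = f_{p,\mu}(x')$, so $F_p(\mu)$ is automatically a union of $\sim_\mu$-equivalence classes. For the restricted case I will note in addition that if $x \in \supp(\mu)$ then $[x]_\mu = \{x\}$: taking $y = x$ in the displayed equality gives $0 = d(x,x) = d(x',x)$ for any $x' \sim_\mu x$. Consequently, if $F_p(\mu)$ (respectively $F_p^\ast(\mu)$) is not equal to a single equivalence class $[x]_\mu$, then it must contain two distinct points $x, x'$ with $x \not\sim_\mu x'$, hence some $y^\ast \in \supp(\mu)$ satisfying $d(x, y^\ast) \ne d(x', y^\ast)$.

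The crucial probabilistic step is as follows. I will set $Z_i := d^p(x, Y_i) - d^p(x', Y_i)$ and $S_n := \sum_{i=1}^n Z_i$. These $Z_i$ are i.i.d., with $\E[Z_1] = f_{p,\mu}(x) - f_{p,\mu}(x') = 0$ (because $x, x'$ are both minimizers), and $\P(Z_1 \ne 0) \ge \mu(\{y^\ast\}) > 0$. The second Borel--Cantelli lemma then forces $Z_n \ne 0$ for infinitely many $n$ almost surely, and this rules out $S_n = 0$ for all sufficiently large $n$ (otherwise $Z_n = S_n - S_{n-1} = 0$ eventually). Since both $x, x' \in F_p(\bar\mu_n)$ holds only when $f_{p, \bar\mu_n}(x) = f_{p, \bar\mu_n}(x')$, that is, $S_n = 0$, I conclude that almost surely at least one of $x, x'$ is missing from $F_p(\bar\mu_n)$ for infinitely many $n$.

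Finally, because the metric topology of $X$ is discrete, each singleton $\{z\}$ is open, so $z \in \kurinner_{n\to\infty} F_p(\bar\mu_n)$ is exactly the condition $z \in F_p(\bar\mu_n)$ for all sufficiently large $n$. The conclusion of the previous paragraph therefore precludes $F_p(\mu) \subseteq \kurinner_{n \to \infty} F_p(\bar\mu_n)$, so $\downkuratowski$-convergence, and hence $\hausdorfftopo$-convergence, fails almost surely. The restricted case runs identically once one notes that almost surely every atom of $\mu$ is sampled infinitely often, so $\supp(\bar\mu_n) = \supp(\mu)$ for all sufficiently large $n$ and $F_p^\ast(\bar\mu_n)$ is then the argmin of $f_{p, \bar\mu_n}$ over the fixed finite set $\supp(\mu)$. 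The only genuinely non-routine step is the Borel--Cantelli exclusion of ``$S_n = 0$ eventually''; everything else is either topological bookkeeping or follows from material already developed.
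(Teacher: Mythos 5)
Your proposal is correct and follows essentially the same route as the paper: the ``if'' direction is exactly the combination of Theorem~\ref{thm:Fp-full-SLLN} with Lemma~\ref{lem:kuratowski-hausdorff-relations}, and your ``only if'' argument via the i.i.d.\ differences $Z_i = d^p(x,Y_i)-d^p(x',Y_i)$ and the impossibility of $S_n=0$ eventually for a non-degenerate increment distribution is the same key step the paper carries out (phrased there directly rather than contrapositively, with ``the common distribution must be the point mass at $0$'' playing the role of your second Borel--Cantelli appeal). The only cosmetic difference is that you organize the conclusion as ``$F_p(\mu)$ is a union of $\sim_\mu$-classes, hence must be a single one,'' whereas the paper proves the two inclusions $[x]_\mu \supseteq F_p(\mu,C)$ and $[x]_\mu \subseteq F_p(\mu,C)$ explicitly; the content is identical.
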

	
	\begin{proof}
		By Theorem~\ref{thm:Fp-full-SLLN} we have $F_p(\bar \mu_n,C_n)\to F_p(\mu,C)$ in $\Fell\vee \uphausdorfftopo$ hence in $\Fell$.
		Thus, adopting the notation of Theorem~\ref{thm:Fp-full-SLLN} and again applying Lemma~\ref{lem:kuratowski-hausdorff-relations}, it suffices to show that $F_p(\bar \mu_n,C_n)\to F_p(\mu,C)$ in $\Fell$ implies $F_p(\mu,C) = [x]_{\mu}$.
		Suppose that $F_p(\bar \mu_n,C_n) \to F_p(\mu,C)$ in $\Fell$ holds almost surely. 
		By Lemma~\ref{lem:Fp-nonempty} there exists some $x\in F_p(\mu,C)$. 
		Let $x'\in F_p(\mu,C)$ be arbitrary and set $\xi_i = d^p(x,Y_i)-d^p(x',Y_i)$. Write $S_n = \sum_{i=1}^{n}\xi_i$. 
		Of course, $F_p(\bar \mu_n,C_n) \to F_p(\mu,C)$ in $\Fell$ implies 
		$F_p(\mu,C)\subseteq \kurinner_{n\to\infty}F_p(\bar \mu_n,C_n)$, and, since $(X,d)$ is discrete, this implies that $S_n = 0$ for
		all sufficiently large $n\in\N$ and hence $\xi_n = 0$ for all sufficiently large $n \in \N$.  Because the $\xi_n$ are i.i.d., this
		can only happen if the common distribution of the $\xi_n$ is the point mass at $0$.
		Therefore, we must have $x\sim_{\mu} x'$, hence $[x]_{\mu}\supseteq F_p(\mu,C)$.
		Moreover, if $x'\in X$, and $x\sim_{\mu} x'$, then $g_{p,\mu}(x)=g_{p,\mu}(x')$, so $[x]_{\mu}\subseteq F_p(\mu,C)$.
		This shows $F_p(\mu,C)=[x]_{\mu}$, as claimed.
	\end{proof}
	
	It would be interesting to understand whether the hypothesis of Theorem~\ref{thm:Fp-full-SLLN} is both necessary and sufficient for the conclusion to hold in general Heine-Borel spaces.
	If not, it is possible that some other notion of equivalence (which must agree with $\sim_{\mu}$ for finite metric spaces) is both necessary and sufficient.
	
	We emphasize that identifying the contexts in which the empirical restricted Fr\'echet means converge in a $T_2$ topology is not just a mathematical curiosity; from statistical and computational perspectives, the distinction between these topologies is crucial.
	We outline a heuristic explanation of this phenomenon, focusing only on the unrestricted Fr\'echet $p$-means for the sake of simplicity:
	Suppose that one has access to i.i.d. samples $Y_1,\dots Y_n$, and that the goal is to estimate the set $F_p(\mu)$ from this data.
	A naive approach is to compute $F_p(\bar \mu_n)$ and treat it as an estimate for $F_p(\mu)$.
	If it is known that $F_p(\bar \mu_n)\to F_p(\mu)$ in some $T_2$ topology almost surely, then $F_p(\bar \mu_n)$ indeed provides a good estimate for $F_p(\mu)$ in the limit.
	In general, however, $F_p(\bar \mu_n)$ can be much smaller than $F_p(\mu)$.
	(Consider Example~\ref{ex:discrete-uniform} in the previous section: $F_p(\mu)$ consists of the entire space, but, with high probability, $F_p(\bar \mu_n)$ is a singleton!)
	
	One way to bootstrap the naive estimate to a slightly better estimate is to compute each of the sets $F_p(\bar \mu_1),\ldots F_p(\bar \mu_n)$ and to let the estimate $\hat{F}(Y_1,\dots Y_n)$ consists of all ``accumulation points'' of this sequence of sets.
	It appears that this improved estimator suffers from significant computational drawbacks, so it would be interesting to understand whether some of the computations can be recycled in such a way that makes this procedure reasonable.
	One idea towards this end, at least for finite metric spaces, is to pre-compute the Voronoi diagram $\mathbf{K}_p = \{K_{x}\}_{x\in X}$ and to track the sets which are hit by the sequence of measures $\{\bar \mu_k\}_{k=1}^{n}$.
	(Since the data are in fact exchangeable, an even better estimate could be given by $\hat{F}(Y_1,\dots Y_n) = \bigcup_{\sigma\in S_n}\hat{F}(Y_{\sigma(1)},\ldots Y_{\sigma(n)})$ where $S_n$ represents the set of all permutations of $\{1,\dots n\}$.
	Of course, the enormity of this computational effort suggests that some clever work would be required in devising an algorithm that makes this procedure reasonable.)
	
	\subsection{Geometric Applications}
	
	In this final subsection, we explore some applications of this theory for metric and Riemannian geometry.
	By a \textit{global non-positively curved (NPC) space}, we mean a complete metric space $(X,d)$ such that for all $x_0, x_1 \in X$ there exists a point $y \in X$ with the property that for all points $z \in X$
	\[
	d^2(z,y) \le \frac{1}{2} d^2(z,x_0) + \frac{1}{2} d^2(z,x_1) - \frac{1}{4} d^2(x_0, x_1).
	\]
	(Some authors refer to these as \textit{Hadamard spaces} or \textit{CAT(0) spaces}.)
	Examples include complete, simply connected Riemannian manifolds of non-positive sectional curvature, metric trees, Hilbert spaces, and Euclidean Bruhat--Tits buildings.
	
	It is known \cite[Section~4]{Sturm_03} that if $(X,d)$ is a global NPC space then $F_2(\mu)$ is a singleton for all $\mu \in \mathcal{P}_2(X)$.
	Hence, we can write $b(\mu)$ for the unique element of $F_2(\mu)$; the map $b:\mathcal{P}_2(X)\to X$ is called the \textit{$d^2$-barycenter map}.
	In \cite[Section~6, and Proposition 6.6]{Sturm_03} it is proved that $b(\bar \mu_n)\to b(\mu)$ almost surely as $n\to\infty$ whenever $(X,d)$ is a global NPC space and $\mu\in \mathcal{P}_{\infty}(X)$.
	Our work leads us to a similar result.
	
	\begin{theorem}\label{thm:NPC-SLLN}
		Suppose that $(X,d)$ is a global NPC space with the Heine-Borel property and $\mu \in \mathcal{P}_2(X)$.
		Then, $b(\bar \mu_n)\to b(\mu)$ almost surely as $n\to\infty$.
	\end{theorem}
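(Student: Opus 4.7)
The plan is to derive the conclusion as a direct consequence of Theorem~\ref{thm:Frechet-SLLN-hausdorff} by exploiting the defining singleton property of $F_2(\nu)$ on a global NPC space. Since $F_2(\nu)$ collapses to a single point for every $\nu\in\mathcal{P}_2(X)$, $\uphausdorfftopo$-convergence of $F_2(\bar\mu_n)$ to $F_2(\mu)$ is the same as convergence of the corresponding barycenters in $(X,d)$.

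First I would verify separability, which is the only hypothesis of Theorem~\ref{thm:Frechet-SLLN-hausdorff} not granted verbatim. The Heine-Borel property makes $(X,d)$ proper, so if $x_0\in X$ is any basepoint, the exhaustion $X=\bigcup_{n\in\N}\overline{B}_n(x_0)$ expresses $X$ as a countable union of compact metric subspaces. Each compact metric space is separable, so $X$ is separable. Combined with $\mu\in\mathcal{P}_2(X)$, this places the situation squarely inside the scope of Theorem~\ref{thm:Frechet-SLLN-hausdorff}.

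Second, I would invoke the cited fact \cite[Section~4]{Sturm_03} that in a global NPC space $F_2(\nu)$ is a singleton for every $\nu\in\mathcal{P}_2(X)$. Each empirical measure $\bar\mu_n$ has finite support, hence lies in $\mathcal{P}_2(X)$ automatically, so $F_2(\bar\mu_n)=\{b(\bar\mu_n)\}$ and $F_2(\mu)=\{b(\mu)\}$ for every $n\in\N$.

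Third, applying Theorem~\ref{thm:Frechet-SLLN-hausdorff} with $p=2$ yields $F_2(\bar\mu_n)\to F_2(\mu)$ in $\uphausdorfftopo$ almost surely. For singletons $\{a\},\{b\}\in\cpt(X)$ the half-Hausdorff distance is
\[
\rho(\{a\},\{b\}) \;=\; \max_{x\in\{a\}}\min_{x'\in\{b\}}d(x,x') \;=\; d(a,b),
\]
so this convergence is exactly $d(b(\bar\mu_n),b(\mu))\to 0$ almost surely, which is the claimed statement. There is no real obstacle in this argument: given the machinery already developed in Section~\ref{sec:topology} and Theorem~\ref{thm:Frechet-SLLN-hausdorff}, the only item requiring a word of justification is the passage from Heine-Borel to separability, which is why I would dispense with it first.
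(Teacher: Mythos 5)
Your proof is correct, and it takes a slightly different route from the paper's. The paper deduces the result from Theorem~\ref{thm:Fp-full-SLLN} (really, from Corollary~\ref{cor:Fp-singleton-SLLN}, since the singleton hypothesis must first be converted into the condition $F_2(\mu)=[x]_\mu$), i.e.\ it invokes the two-sided convergence in $\uphausdorfftopo\vee\downkuratowski$, whose proof runs through the equivalence relation $\sim_\mu$ and a subsequence-and-contradiction argument for the Kuratowski lower limit. You instead observe that once every Fr\'echet $2$-mean in sight is a singleton --- which Sturm's theorem guarantees for all of $\mu,\bar\mu_1,\bar\mu_2,\dots$ --- the one-sided conclusion of Theorem~\ref{thm:Frechet-SLLN-hausdorff} already suffices, because $\rho(\{a\},\{b\})=d(a,b)$ makes upper Hausdorff convergence of singletons identical to convergence of the underlying points. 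This bypasses the $\sim_\mu$ machinery entirely and is the more economical derivation for this particular corollary; the paper's route buys nothing extra here beyond uniformity with its other $T_2$-convergence results. Your preliminary step (Heine--Borel $\Rightarrow$ $X=\bigcup_n \overline{B}_n(x_0)$ is $\sigma$-compact $\Rightarrow$ separable) correctly supplies the one hypothesis of Theorem~\ref{thm:Frechet-SLLN-hausdorff} not stated verbatim, a point the paper leaves implicit.
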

	
	\begin{proof}
		Immediate from Theorem~\ref{thm:Fp-full-SLLN}.
	\end{proof}
	
	Note that neither set of the hypotheses in \cite[Proposition 6.6]{Sturm_03} or in our Theorem~\ref{thm:NPC-SLLN} implies the other.
	However, our work generalizes nicely to a setting which allows for some small positive curvature:
	Let $(X,d)$ be the metric corresponding to a complete Riemannian manifold $M$.
	Suppose that the sectional curvature of $M$ is upper bounded by $\Delta > 0$, and write $R > 0$ for the injectivity radius of $M$.
	Then define
	\begin{equation}
		\rho(p,\Delta,R) = \begin{cases}
			\frac{1}{2}\min\left\{R,\frac{\pi}{2\sqrt{\Delta}}\right\}&\text{ if }1 \le p \le 2 \\
			\frac{1}{2}\min\left\{R,\frac{\pi}{\sqrt{\Delta}}\right\} &\text{ if } p\ge 2 \\
		\end{cases}.
	\end{equation}
	By the main result of \cite{Afsari}, if we have $1 < p < \infty$ and that $\mu\in \mathcal{P}_{\infty}(X)$ has support contained in a ball of radius $\rho < \rho(p,\Delta,R)$, then it follows that $F_p(\mu)$ is a singleton; write $b(\mu)\in X$ for this element.
	Moreover, $\bar \mu_n$ has support contained in the same ball for all $n\in\N$ almost surely, so we can write $b(\bar \mu_n)\in X$ for $n\in\N$ for these (random) elements.
	Then we have the following.
	
	\begin{theorem}
		Let $(X,d)$ be the metric space arising from a complete Riemannian manifold whose sectional curvature is upper bounded by $\Delta > 0$ and whose injectivity radius is $R > 0$, and fix $1 < p < \infty$.
		If $\mu\in\mathcal{P}_{\infty}(X)$ has support contained in a ball of radius $\rho < \rho(p,\Delta,R)$, then $b(\bar \mu_n)\to b(\mu)$ almost surely as $n\to\infty$.
	\end{theorem}
	
	\begin{proof}
		Immediate from \cite{Afsari} and Theorem~\ref{thm:Fp-full-SLLN}.
	\end{proof}
	
	\nocite{*}
	\bibliography{FrechetMeanSLLN}
	\bibliographystyle{plain}
	
\end{document}